\title{Metric SYZ conjecture and non-archimedean geometry}
\author{Yang Li}
\date{\today}
\newtheorem{thm}{Theorem}[section]
\newtheorem{lem}[thm]{Lemma}
\theoremstyle{definition}
\newtheorem{eg}[thm]{Example}
\newtheorem{conj}[thm]{Conjecture}
\newtheorem{cor}[thm]{Corollary}
\newtheorem{rmk}[thm]{Remark}
\newtheorem{prop}[thm]{Proposition}
\newtheorem{Def}[thm]{Definition}
\newtheorem*{Notation}{Notation}
\newtheorem*{Acknowledgement}{Acknowledgement}
\newcommand{\cf}{\emph{cf.} }
\newcommand{\R}{\mathbb{R}}
\newcommand{\C}{\mathbb{C}}
\newcommand{\N}{\mathbb{N}}
\newcommand{\Q}{\mathbb{Q}}
\newcommand{\norm}[1]{\left\lVert#1\right\rVert}
\newcommand{\mres}{\mathbin{\vrule height 1.6ex depth 0pt width
		0.13ex\vrule height 0.13ex depth 0pt width 1.3ex}}
\def\Xint#1{\mathchoice
	{\XXint\displaystyle\textstyle{#1}}%
	{\XXint\textstyle\scriptstyle{#1}}%
	{\XXint\scriptstyle\scriptscriptstyle{#1}}%
	{\XXint\scriptscriptstyle\scriptscriptstyle{#1}}%
	\!\int}
\def\XXint#1#2#3{{\setbox0=\hbox{$#1{#2#3}{\int}$ }
		\vcenter{\hbox{$#2#3$ }}\kern-.6\wd0}}
\def\dashint{\Xint-}
\begin{document}
	\maketitle

\begin{abstract}
We show that assuming a conjecture in non-archimedean geometry, then a metric formulation of the SYZ conjecture can be proved in large generality.
\end{abstract}

\section{Introduction}

The purpose of this paper is to relate a metric version of the Strominger-Yau-Zaslow (SYZ) conjecture to non-archimedean (NA) pluripotential theory. One interpretation of the SYZ conjecture \cite{SYZ} is the following:

\begin{conj}
Given a 1-parameter maximally degenerate  family of polarized $n$-dimensional  Calabi-Yau (CY) manifolds $(X_t, g_t, J_t, \omega_t, \Omega_t)$ of holonomy $SU(n)$ over the punctured disc $\mathbb{D}_t^*$, then there exist special Lagrangian $T^n$-fibrations on the generic region of $X_t$ for $0<|t|\ll 1$.
\end{conj}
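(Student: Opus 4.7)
The plan is to translate this differential-geometric problem into a non-archimedean one, solve it on the Berkovich analytic space, and then descend the resulting affine-geometric structure back to the complex manifolds $X_t$. First I would view the family $\{X_t\}$ as a single variety $X^{\mathrm{NA}}$ over the non-archimedean field $\mathbb{C}(\!(t)\!)$ and form its Berkovich analytification. The maximal degeneration hypothesis implies that the essential skeleton $\mathrm{Sk}(X)\subset X^{\mathrm{NA}}$ is an $n$-dimensional pseudomanifold (homeomorphic to $S^n$ in the simply connected case) with a dense open smooth part $\mathrm{Sk}^{\mathrm{reg}}$ carrying an integral affine structure; this is what must play the role of the base of the conjectural special Lagrangian $T^n$-fibration.

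The heart of the argument is to equip $\mathrm{Sk}^{\mathrm{reg}}$ with a real Monge--Amp\`ere metric $g_{\mathrm{MA}}$ and to show that $(\mathrm{Sk}^{\mathrm{reg}},g_{\mathrm{MA}})$ describes the Gromov--Hausdorff limit of the rescaled Calabi--Yau metrics $(X_t,\mathrm{diam}^{-2}g_t)$ restricted to the generic region. To produce $g_{\mathrm{MA}}$ I would solve the non-archimedean Monge--Amp\`ere equation on $X^{\mathrm{NA}}$ in the sense of Boucksom--Favre--Jonsson; invoking the assumed non-archimedean conjecture should yield a continuous solution whose Monge--Amp\`ere measure is supported on $\mathrm{Sk}(X)$ with a prescribed Lebesgue-type density on $\mathrm{Sk}^{\mathrm{reg}}$. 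Combining this with the comparison principle, regularisation, and the Berman--Boucksom--Jonsson variational machinery would upgrade the purely potential-theoretic statement to a $C^0$-convergence of the K\"ahler potentials of $\omega_t$, hence to Gromov--Hausdorff convergence of the underlying metrics on compact subsets of the generic region.

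Once the affine base is in place, the special Lagrangian fibration is constructed by a gluing and perturbation argument. Over each point of $\mathrm{Sk}^{\mathrm{reg}}$ one has a model Lagrangian torus built from the semi-flat Gross--Wilson ansatz, and it remains to perturb it to an honest special Lagrangian fibre of $(g_t,J_t,\omega_t,\Omega_t)$. Because the true CY metrics converge to the flat model on compact subsets of the generic region, McLean-type unobstructedness of special Lagrangian deformations combined with a quantitative implicit function theorem should assemble the fibres into a smooth fibration on a large open set as $|t|\to 0$.

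The main obstacle, and the reason any such theorem can only be conditional, is controlling the solution of the non-archimedean Monge--Amp\`ere equation near the singular part of the skeleton: the modulus of continuity of the NA potential, and hence the rate at which the generic region exhausts $X_t$ in measure, is precisely what the non-archimedean pluripotential conjecture is designed to supply. Without that input one cannot ensure that $\mathrm{Sk}^{\mathrm{reg}}$ has full measure, or that the gluing region is large enough for the implicit function step to close.
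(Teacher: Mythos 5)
Your outline reproduces the paper's overall architecture (NA Monge--Amp\`ere solution on the Berkovich space $\to$ real Monge--Amp\`ere structure on the skeleton $\to$ convergence of potentials $\to$ perturbation of model tori into special Lagrangians), but the technical core is elided in ways that hide the actual difficulties, and one step as written would fail. First, you mischaracterize the non-archimedean input that is assumed: it is not a modulus-of-continuity statement near the singular part of the skeleton, but the \emph{comparison property} that the Boucksom--Favre--Jonsson potential $\phi_0$ factors through the retraction map $r_{\mathcal X}$ over the $n$-dimensional open faces of $Sk(X)$ for some semistable snc model. This is what lets Vilsmeier's theorem identify the NA MA measure with $n!\,MA_\R(\phi_0)$, so that $\phi_0$ actually solves a real Monge--Amp\`ere PDE to which the Caffarelli--Mooney regularity theory applies; without it one does not even know that the restriction of $\phi_0$ to the faces satisfies any equation, and the ``generic region'' ($C^\infty$-locus of $\phi_0$) cannot be defined. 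Second, you give no mechanism for transporting the NA solution to an honest positive current on $X_t$: the paper does this via a non-archimedean Fubini--Study approximation whose associated classical Fubini--Study metrics on $X_t$ have local potentials $C^0$-close to $\phi_0\circ\mathrm{Log}_{\mathcal X}$ while \emph{preserving positivity}; ``comparison principle plus regularisation'' does not substitute for this, and the Berman--Boucksom--Jonsson variational machinery lives entirely on the Berkovich space and says nothing about $X_t$.

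Third, the $C^0$-comparison between the transported potential and the true Calabi--Yau potential on the collapsing fibres is the hardest analytic step, and it requires two uniform-in-$t$ ingredients you do not mention: a uniform Skoda inequality for the degenerating family (the subject of the companion paper), and a one-sided uniform $L^1$-stability estimate adapted from Ko{\l}odziej, applied after a regularisation making the comparison metric's volume form close to $d\mu_t$ in total variation. Finally, $C^0$-convergence of potentials does not imply convergence of metrics, so your appeal to McLean-type perturbation does not close: the paper must first upgrade $C^0$ to $C^\infty_{loc}$ convergence in the generic region by combining the interior smoothness of the real MA solution (Caffarelli) with Savin's small perturbation theorem for the complex Monge--Amp\`ere equation, and only then is $(\omega_{CY,t},\Omega_t)$ close enough to the semiflat model for the implicit-function-theorem construction of the special Lagrangian fibration to apply. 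As stated, your argument jumps from potential-theoretic convergence directly to geometric convergence, which is precisely the gap Savin's theorem is there to fill.
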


This interpretation puts the CY metrics at the forefront, in contrast to alternative softer viewpoints which emphasize the algebraic, symplectic or topological aspects. 
Here the generic region means a subset of $X_t$ with almost the full percentage of the CY measure on $X_t$. This represents a compromise, as the difficulty of finding a special Lagrangian fibration on the entire $X_t$ is well appreciated since \cite{Joyce}.

The potential relevance of NA geometry to SYZ conjecture was suggested in Kontsevich and Soibelman \cite{KS}\cite{KS2}. NA pluripotential theory is taken much further by Boucksom et al \cite{Boucksomsurvey}\cite{Boucksom}\cite{Boucksom1}\cite{Boucksomsemipositive}. Impressionistically,

\begin{itemize}
\item NA geometry offers a natural language to describe the degeneration of complex manifolds into real simplicial/tropical objects.

\item It systematically encodes the combinatorics of tropical geometry.

\item It encodes much of the  birational geometry for the degeneration family.

\item
Usual notions in K\"ahler geometry such as functions, line bundles, K\"ahler metrics, complex Monge-Amp\`ere  measures, have natural (albeit exotic looking) analogues in NA geometry.

\item 
There is a natural way for CY volume measures to converge into a measure on a NA space.

\item 
An analogue of the Calabi conjecture holds in the NA context: one can solve the NA Monge-Amp\`ere equation.

\item 
Under additional hypotheses, the NA Monge-Amp\`ere measure agrees with the real Monge-Amp\`ere measure.
\end{itemize}

In short, NA spaces are the natural candidates for limits of CY metrics relevant for the SYZ conjecture. While
NA pluripotential theory has close analogy with K\"ahler geometry, 
 so far it has found no direct implication on the behaviour of degenerating CY metrics.
The goal of this paper is to show that, \emph{assuming a comparison property between NA vs. real MA equations}, then the crank of NA pluripotential theory can be turned to prove the metric SYZ conjecture in quite satisfactory generality, at least in an algebraic setup. This largely accomplishes the reduction of this metric SYZ conjecture to a problem in NA geometry.

We work over $\C$. To set the scene,

\begin{itemize}
\item 
 Let $S$ be a smooth affine algebraic curve, with a point $0\in S$.
 An \emph{algebraic degeneration family} is given by a submersive projective morphism $\pi: X\to S\setminus \{0\}$ with smooth connected $n$-dimensional fibres $X_t$ for $t\in S\setminus \{0\}$. This is in contrast with the \emph{formal} setting over the punctured formal disc $\text{Spec}(K)$ with $K=\C(\!(t)\!)$.
An algebraic degeneration induces a formal degeneration by base change.

\item  
A \emph{polarisation} is given by an ample line bundle $L$ over $X$.

\item 
We say $\pi$ is a degeneration family of \emph{Calabi-Yau} manifolds if there is a trivialising section $\Omega$ of the canonical bundle $K_X$. Over a small disc $\mathbb{D}_t$ around $0\in S$, this induces holomorphic volume forms $\Omega_t$ on $X_t$ via $\Omega= dt\wedge \Omega_t$. The \emph{normalised Calabi-Yau measure} on $X_t$ is the probability measure
\begin{equation}\label{CalabiYaumeasureeqn}
d\mu_t= \frac{   \Omega_t \wedge \overline{\Omega}_t }{   \int_{X_t} \Omega_t \wedge \overline{\Omega}_t   }.
\end{equation}
The Calabi-Yau metrics $\omega_{CY,t}$ on $X_t$ are the unique K\"ahler metrics in the class $\frac{1}{ |\log |t| |  }c_1(L)$ such that 
\begin{equation}
\frac{ \omega_{CY,t}^n }{  \int_{X_t} \omega_{CY,t}^n  }=d\mu_t.
\end{equation}

\item 
We say $\pi: X\to S\setminus \{ 0\}$ is a \emph{maximal degeneration} of Calabi-Yau manifolds if the essential skeleton has the maximal dimension $n$, and the degeneration family admits a semistable snc model over $S$. Maximal degenerations are also known as large complex structure limits (\cf section \ref{volumeasymptoteessentialskeleton} for more details).

\end{itemize}

\begin{rmk}\label{sncmodel}
Notice we have intentionally avoided completing the family at $0\in S$. That would involve the choice of a \emph{model} of $\pi: X\to S$, namely a normal flat projective $S$-scheme $\mathcal{X}$ together with an isomorphism with $X$ over the punctured curve $S\setminus \{ 0\}$. It is called an \emph{snc model} if $\mathcal{X}$ is smooth, and
the central fibre over $0\in S$  is a simple normal crossing divisor in $\mathcal{X}$. If furthermore the central fibre is reduced, it is called a \emph{semistable snc model}. Models can be analogously defined over the formal disc. The existence of snc models is a consequence of Hironaka's resolution theorem. They are highly nonunique. By the semistable reduction theorem \cite[chapter 2]{ToroidalembeddingsI}, after finite base change to another smooth algebraic curve $S'$, we can always find some semistable snc model for the degeneration family $X\times_S (S'\setminus \{0\})$, so the existence of a semistable snc model is not a substantial assumption. Everything here is quasi-projective. The choice of a model is very useful, but not intrinsic to the degenerating CY metrics.
\end{rmk}

We now briefly explain the context of the comparison property between NA vs. real MA equations, referring the details to section \ref{NAsurvey}. Given a polarized algebraic maximal degeneration family of CY manifolds, one can canonically associate a NA object called the \emph{Berkovich space} $X_K^{an}$, which can be viewed as the inverse limit of all the dual intersection complexes $\Delta_{\mathcal{X}}$ of snc models over the formal disc. There is a \emph{Lebesgue measure} $d\mu_0$ supported on the essential skeleton $Sk(X)\subset X_K^{an}$, which is the natural limit of normalised CY measures on $X_t$ in a suitable sense. A central result of NA pluripotential theory due to Boucksom-Favre-Jonsson \cite{Boucksom} is the solution of the NA Calabi conjecture. In this setting, it provides a unique (up to scale) continuous semipositive metric $\norm{\cdot}_{CY}$ on $X_K^{an}$ with the polarisation $L$, which solves the NA MA equation
\[
MA(\norm{\cdot}_{CY} )= (L^n)d\mu_0,
\]
where $(L^n)=\int_{X_t} c_1(L)^n$. The NA MA measure is defined by intersection theory, so this is not even a partial differential equation as it stands, although the theory shares many features of K\"ahler geometry. Given a model $\mathcal{L}$ of the line bundle $L$, then the semipositive metric can be represented by a potential function $\phi_0$ on $X_K^{an}$, similar to the usual relation in K\"ahler geometry between Hermitian metrics on line bundles and potential functions.

Now given any snc model $\mathcal{X}$ over the formal disc, there is a retraction map $r_{\mathcal{X}}: X_K^{an}\to \Delta_{\mathcal{X}}$ onto the dual intersection complex, and $Sk(X)$ sits inside $\Delta_{\mathcal{X}}$ as a simplicial subcomplex, with dimension $n$ by the maximal degeneration assumption. The $n$-dimensional open faces $\text{Int}(\Delta_J)$ of $Sk(X)$ have a canonical integral affine structure. In local affine coordinates, the Lebesgue measure $d\mu_0$ is a constant multiple of $dx_1\ldots dx_n$. Assuming $\mathcal{L}$ lives over $\mathcal{X}$, the semipositivity of $\norm{\cdot}_{CY}$ implies that the potential $\phi_0$ is a convex function on $\text{Int}(\Delta_J)$, similar to the fact that K\"ahler metrics are locally represented by psh functions. Consequently, the restriction of $\phi_0$ to $\text{Int}(\Delta_J)\subset Sk(X)$ has a well defined real MA measure $MA_\R(\phi_0)$.

The comparison property hypothesis (\cf section \ref{NACalabi}) requires that there is some semistable snc model $\mathcal{X}$ over the formal disc as above, such that over all the $n$-dimensional open faces $\text{Int}(\Delta_J)$ of $Sk(X)\subset \Delta_{\mathcal{X}}$, the function $\phi_0= \phi_0\circ r_{\mathcal{X} }$ factors through the retraction map. This means the information of the potential $\phi_0$ on the abstract looking space $X_K^{an}$ is largely contained in a convex function on these open faces. Morever, a recent result of Vilsmeier \cite{Vilsmeier} implies that under this comparison hypothesis, then $\phi_0$ satisfies a real MA equation on these open faces:
\[
MA_\R(\phi_0)= \frac{ (L^n) }{n!}d\mu_0.
\]
In particular, the NA MA equation is closely related to a PDE after all. Such a relation between NA geometry and the real MA equation is in the spirit of the Kontsevich-Soibelman conjecture \cite{KS} about Gromov-Hausdorff limits of maximally degenerate CY metrics.

The main theorem of this paper is

\begin{thm}
Let $X\to S\setminus \{ 0\}$ be an algebraic maximal degeneration family of Calabi-Yau manifolds, with the polarization ample line bundle $L\to X$. Assume the NA MA-real MA comparison property holds for $X$. 
Given any $0<\delta\ll 1$, for sufficiently small $t$ depending on $\delta$, there exists a special Lagrangian $T^n$-fibration with respect to the Calabi-Yau structure $(\omega_{CY,t}, \Omega_t)$ on an open subset of $X_t$ whose normalized Calabi-Yau measure is at least $1-\delta$.
\end{thm}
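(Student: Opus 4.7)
The plan is to use the NA potential $\phi_0$ as a blueprint for a semi-flat Calabi-Yau model, match this model to $(\omega_{CY,t},\Omega_t)$ on a region of $X_t$ of nearly full measure, and then perturb the obvious special Lagrangian fibration on the model into a genuine one. Fix an $n$-dimensional open face $\text{Int}(\Delta_J)\subset Sk(X)$ and a relatively compact $K\Subset \text{Int}(\Delta_J)$. By the comparison hypothesis combined with Vilsmeier's theorem recalled in the introduction, $\phi_0$ is a convex function on $\text{Int}(\Delta_J)$ in integral affine coordinates, solving $MA_\R(\phi_0)=\frac{(L^n)}{n!}d\mu_0$ with $d\mu_0$ a constant multiple of Lebesgue measure. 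Caffarelli's interior regularity for the real Monge-Amp\`ere equation, together with strict convexity (which should follow from the global structure of $\phi_0$ as a pluripotential-theoretic solution on the entire skeleton), and Schauder bootstrap, upgrade $\phi_0$ to a smooth strictly convex function on $K$.

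With such $\phi_0$ on $K$ one assembles in the standard way a semi-flat Calabi-Yau structure $(\omega_{sf},\Omega_{sf})$ on a torus bundle $p_{sf}:Y_K\to K$: the integral affine structure gives a lattice $\Lambda_K\subset T^*K$, the quotient $T^*K/\Lambda_K$ inherits a canonical complex structure from $D^2\phi_0$, and the real Monge-Amp\`ere equation $\det D^2\phi_0=\text{const}$ translates into Ricci-flatness against the natural holomorphic volume form. By construction $p_{sf}$ is a special Lagrangian $T^n$-fibration. The central analytic step is then to show that on a tubular neighbourhood $U_{K,t}\subset X_t$ lying over $K$ under a natural collapsing map $F_t:X_t\to Sk(X)$, the rescaled Calabi-Yau data $(\omega_{CY,t},\Omega_t)$ converge smoothly to the semi-flat model, modulo a suitable diffeomorphism identifying $U_{K,t}$ with $Y_K$. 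The NA comparison property is what anchors this: uniqueness of the NA MA solution forces the normalised K\"ahler potential of $\omega_{CY,t}$ to converge in $C^0$ to $\phi_0\circ F_t$, and the factorization $\phi_0=\phi_0\circ r_{\mathcal{X}}$ makes this limit constant along the collapsing fibres, so the discrepancy to the semi-flat model is controlled pluripotential-theoretically; rescaled Monge-Amp\`ere elliptic estimates then bootstrap to $C^k$.

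Once smooth convergence is in hand, a McLean-type implicit function argument, based on the unobstructedness of compact special Lagrangian deformations with respect to a nearby Calabi-Yau background, perturbs $p_{sf}$ into a genuine special Lagrangian $T^n$-fibration for $(\omega_{CY,t},\Omega_t)$ on $U_{K,t}$. For the measure bound, exhaust the smooth top-dimensional locus of $Sk(X)$ by compact subsets $K_j$ whose complement in $Sk(X)$ has Lebesgue measure tending to zero, and apply the known convergence of normalised CY measures on $X_t$ to $d\mu_0$ on $Sk(X)$ due to Boucksom-Jonsson: for $K_j$ large enough and $t$ small enough, the CY measure of $\bigcup_J U_{K_j,t}$ exceeds $1-\delta$, and the fibrations constructed over each $K_j$ then paste into the required fibration on an open subset of $X_t$. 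The main obstacle in the programme is the smooth convergence on $U_{K,t}$: while $C^0$-convergence of potentials is immediate from pluripotential theory, upgrading to smooth (or even $C^2$) convergence of the rescaled metrics requires delicate a priori estimates that are only standard in special cases such as K3 surfaces and torus fibrations admitting a section, and will likely constitute the most technical portion of the argument.
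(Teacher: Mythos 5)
Your overall architecture is the right one and matches the paper's: build a semiflat model from the convex solution $\phi_0$ of the real MA equation on the open faces, prove that the rescaled CY potentials converge to $\phi_0\circ\text{Log}_{\mathcal{X}}$ in the generic region, upgrade to $C^k$, and then perturb the $T^n$-fibres of the log map into special Lagrangians, with the measure bound coming from the convergence of $d\mu_t$ to $d\mu_0$. But you have located the difficulty in exactly the wrong place. You assert that ``uniqueness of the NA MA solution forces the normalised K\"ahler potential of $\omega_{CY,t}$ to converge in $C^0$ to $\phi_0\circ F_t$'' and later call this $C^0$-convergence ``immediate from pluripotential theory.'' It is not. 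Uniqueness of the solution on the Berkovich space gives no mechanism whatsoever for transferring information to the archimedean fibres $X_t$; building that bridge is the main content of the paper. The actual argument requires: (i) an NA Fubini--Study approximation of $\norm{\cdot}_{CY}$, realized as genuine positive Fubini--Study metrics on $X_t$ whose local potentials are $C^0$-close to $\phi_0\circ\text{Log}_{\mathcal{X}}$; (ii) a regularisation step (a max-construction with bump functions over the regular locus of $\phi_0$) producing a psh comparison potential $\psi_t$ whose complex MA measure is $L^1$-close to $d\mu_t$; (iii) a uniform Skoda inequality for the collapsing family, proved in a separate companion paper; and (iv) a new one-sided uniform $L^1$-stability estimate adapted from Kolodziej, which converts the $L^1$-closeness of volume densities into $C^0$-closeness of $\phi_{CY,t}$ to $\psi_t$. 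Step (iv) is deliberately asymmetric because the comparison metric's density is uncontrolled outside the generic region, and the paper explicitly notes that the more naive Poincar\'e-inequality route fails because the $n$-dimensional open faces of $Sk(X)$ are disconnected, so per-face normalisations cannot be matched. None of this is standard pluripotential theory.

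Conversely, the step you flag as ``the most technical portion'' --- upgrading $C^0$-convergence to $C^k$-convergence --- is comparatively soft in this setting: since the limit is a smooth solution of the complex MA equation on the local covering space (pulled back from the smooth strictly convex locus of $\phi_0$), Savin's small perturbation theorem plus Schauder gives interior $C^{k,\gamma}$ bounds directly, with no need for the delicate collapsing estimates you allude to. A smaller issue: you invoke strict convexity of $\phi_0$ on all of a compact $K\Subset\text{Int}(\Delta_J)$ as something that ``should follow from the global structure''; this is unproven (the Pogorelov/Caffarelli/Mooney examples show it can fail locally, and ruling it out globally is an open problem tied to the Kontsevich--Soibelman conjecture). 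It is also unnecessary for the theorem: one works on an exhaustion $W_\delta$ of the regular locus, whose complement has small Lebesgue measure because the singular set has vanishing $(n-1)$-Hausdorff measure, and this suffices for the $1-\delta$ measure bound.
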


The proof strategy is the following. It has already been understood in the author's previous paper \cite{LiFermat} that for the existence of the special Lagrangian fibration in the generic region, it is enough to prove that the K\"ahler potential for the CY metric on $X_t$ is $C^0$-close to a solution of the real Monge-Amp\`ere equation at least in the generic region. The natural candidate of this real MA solution comes from the Boucksom-Favre-Jonsson solution of the NA MA equation. 
To bridge the gap between the NA space and the complex manifold $X_t$, we go through a Fubini-Study $ C^0$-approximation of the potential, to get a K\"ahler metric whose potential  is $C^0$ close to the BFJ solution in a suitable sense. Here it is crucial to preserve positivity. We then regularize it to make its volume form close to being CY in the generic region. We compare the regularized potential to the CY potential in $C^0$ by adapting an $L^1$-stability argument of Kolodziej; making this work on the  highly degenerate complex manifolds $X_t$ requires a uniform Skoda estimate, proved in a companion paper \cite{LiuniformSkoda}.

In this strategy the appeal to NA geometry is for the following reasons.

\begin{itemize}
\item NA geometry is a natural language to discuss intrinsic properties of the degeneration family independent of the choice of models.

\item  The NA MA solution is a natural candidate solution to the real MA equation on the essential skeleton $Sk(X)$.


\item  The NA framework effectively encodes the notion of positivity (psh properties). This gives one answer to the question: what is the appropriate analogous notion of K\"ahler potentials in the maximal degeneration limit? 

\item
The solution to the NA MA equation is known to be unique. This allows one to expect that the limit of the CY metrics on $X_t$ is unique at least in the generic region, without the need to pass to subsequences.

\end{itemize}

The organization of the paper is as follows. We collect some essential analytic ingredients in section \ref{analyticalbackgrounds}; in particular we prove a uniform one-sided version of the $L^1$-stability estimate by adapting an argument of Kolodziej. Section \ref{NAsurvey} is an introduction of NA geometry for differential geometers, and the emphasis is on the relation with K\"ahler geometry. Section \ref{PotentialestimateSYZfibration} works on the metric SYZ conjecture proper, and the key is to prove the $C^0_{loc}$-convergence of the local potentials of the CY metric towards the NA solution, assuming the comparison property. We end this section with some discussions about the closely related Kontsevich-Soibelman conjecture. Section \ref{Furtherdirections} presents problems and speculations beyond the SYZ setting. It contains a heuristic general formula for NA MA measure, and suggests how the NA MA equation is related to non-maximal degenerations by proposing a generalised Calabi ansatz.


\begin{Notation}
	Our convention is $d=\partial+ \bar{\partial}$,  $d^c= \frac{ \sqrt{-1} }{2\pi} (-\partial+ \bar{\partial})$, so $dd^c= \frac{\sqrt{-1}}{\pi}\partial \bar{\partial}$. The relation between K\"ahler potentials and K\"ahler metrics is $\omega_\phi= \omega+dd^c\phi$. Alternatively, we think of a K\"ahler metric in terms of local absolute potentials, meaning $\omega=dd^c\varphi$ for locally defined psh functions $\varphi$. Given a Hermitian metric $h$ on a line bundle $L$, its curvature form is $-dd^c\log h^{1/2}$ in the class $c_1(L)$.
\end{Notation}

\begin{Acknowledgement}
The author is a 2020 Clay Research Fellow, currently based at the Institute for Advanced Study.
He thanks S. Boucksom and C. Vilsmeier for answering questions on NA geometry, and Song Sun, Simon Donaldson, and Valentino Tosatti for discussions.
\end{Acknowledgement}

\section{Analytic backgrounds}\label{analyticalbackgrounds}

\subsection{Uniform Skoda inequality}\label{Skodainequalitybackgroundsection}

Given a K\"ahler manifold $(Y,\omega)$, 
an upper semicontinuous $L^1_{loc}$ function $\phi\in PSH(Y,\omega)$, if $\omega_\phi=\omega+ dd^c \phi\geq 0$. A Skoda type inequality captures the apriori regularity of such functions. The following uniform version is the main result in the author's companion paper \cite{LiuniformSkoda}.

\begin{thm}(Uniform Skoda estimate)\label{UniformSkodathm}
Given a polarised algebraic  degeneration family of Calabi-Yau manifolds $\pi: X\to S\setminus \{0\}$ as in the Introduction. Let $\omega_{FS}$ be a fixed Fubini-Study metric on $(X,c_1(L))$ induced by a projective embedding via the sections of a high power of $L$, and use $\omega_{FS,t}= \frac{1}{|\log |t||} \omega_{FS}|_{X_t}$ to define a family of background metrics on $X_t$ in the class  $\frac{1}{|\log |t||} c_1(L)$. Then there are uniform positive constants $\alpha, A$ independent of $t$ for $0<|t|\ll 1$, such that for the normalised Calabi-Yau measures $d\mu_t$,
	\[
	\int_{X_t} e^{-\alpha u}  d\mu_t \leq A, \quad \forall u\in PSH(X_t, \omega_{FS,t}) \text{ with } \sup_{X_t} u=0.
	\]

\end{thm}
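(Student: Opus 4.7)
The plan is to reduce the uniform Skoda estimate to a classical Skoda inequality on a \emph{fixed} ambient projective space, and then to absorb the degeneration of the Calabi-Yau measure $d\mu_t$ using the factor $1/|\log|t||$ that comes for free from the scaling of the K\"ahler class. The starting observation is that setting $v:=|\log|t||\,u$, the hypothesis $\omega_{FS,t}+dd^c u\geq 0$ is equivalent to $\omega_{FS}|_{X_t}+dd^c v\geq 0$ with $\sup_{X_t} v=0$, and the target inequality becomes
\[
\int_{X_t} e^{-\tfrac{\alpha}{|\log|t||}\,v}\,d\mu_t\leq A.
\]
Thus only very weak exponential integrability of a uniformly $\omega_{FS}$-psh function against $d\mu_t$ is required, and this slack is the main source of flexibility in the argument.

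Next I would use the projective embedding $X\hookrightarrow \mathbb{P}^N\times S$ by sections of a sufficiently high power of $L$, arranged so that $\omega_{FS}|_{X_t}$ is the restriction of the ambient Fubini-Study metric. By a Coman-Guedj-Zeriahi type extension theorem for $\omega_{FS}$-psh functions on subvarieties of projective space, one produces $V\in PSH(\mathbb{P}^N,\omega_{FS})$ with $\sup V\leq C_0$ and $V|_{X_t}\leq v$, where $C_0$ depends only on the embedding, not on $t$. On $\mathbb{P}^N$ the classical Skoda-Zeriahi inequality gives uniform constants $\alpha_0>0$, $B<\infty$ with $\int_{\mathbb{P}^N} e^{-\alpha_0 V}\,\omega_{FS}^N\leq B$, and this is the fixed global input.

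The main obstacle, and the heart of the argument, is to transfer this ambient exponential integrability to an estimate weighted by $d\mu_t$. For this I would fix a semistable snc model $\mathcal{X}$ and work in local analytic coordinates adapted to the strata of its central fibre, where $\Omega_t\wedge\overline{\Omega}_t$ has explicit logarithmic-pole behaviour along the snc divisor. A direct calculation should show that the Radon-Nikodym density of $d\mu_t$ against a reference ambient volume on $X_t$ is controlled by a power of $|\log|t||$, which is exactly the budget absorbed by the factor $1/|\log|t||$ in our exponent via H\"older's inequality, at the cost of slightly shrinking $\alpha$. Patching through a cover of $X_t$ whose combinatorial type is governed by the (fixed) dual intersection complex then yields the uniform global bound. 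I expect the delicate point to be the uniform behaviour near the deepest strata of the central fibre, where $d\mu_t$ is most concentrated and where the ambient-extension step is forced to create the worst Lelong numbers; it is precisely in this regime that the decoupling $v=|\log|t||\,u$ is indispensable, and failing to exploit it would leave the estimate non-uniform in $t$.
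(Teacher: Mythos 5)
First, a point of reference: the paper does not actually prove Theorem \ref{UniformSkodathm} --- it is quoted as the main result of the companion paper \cite{LiuniformSkoda} --- so there is no internal proof to compare against. Your opening moves are nonetheless reasonable and consistent with the known strategy: the rescaling $v=|\log|t||\,u$, the extension of $\omega_{FS}$-psh functions from the fibres $X_t$ (which have fixed degree in a fixed projective embedding) to the ambient projective space with uniformly controlled $\sup$, and the classical Skoda--Zeriahi bound on $\mathbb{P}^N$ are all sensible first steps.

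The gap is in the final transfer step, and it sits exactly where the real content of the theorem lies. The claim that the Radon--Nikodym density of $d\mu_t$ against a fixed ambient volume on $X_t$ is ``controlled by a power of $|\log|t||$'' is false. In the adapted coordinates of section \ref{volumeasymptoteessentialskeleton}, in the region of $X_t$ corresponding to an $n$-dimensional face $\Delta_J$ of a semistable snc model one has
\[
d\mu_t \;\sim\; \frac{C}{|\log|t||^{n}}\prod_{i=1}^{n}\frac{\sqrt{-1}\,dz_i\wedge d\bar z_i}{|z_i|^{2}},
\]
whereas the ambient Fubini--Study volume restricted to $X_t$ is comparable to $\prod_i \sqrt{-1}\,dz_i\wedge d\bar z_i$. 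At a point with $|z_i|=|t|^{x_i}$ and $\sum x_i=1$ the density is therefore of order $|t|^{-2}/|\log|t||^{n}$: it blows up polynomially in $1/|t|$, not logarithmically. Consequently $\|d\mu_t/dV_{FS}\|_{L^q}$ diverges like a power of $1/|t|$ for every $q>1$, the H\"older step cannot close, and the factor $1/|\log|t||$ in the exponent is nowhere near enough to absorb a polynomial loss. What actually rescues the estimate is different in kind: up to the $|\log|t||^{-n}$ normalisation, $d\mu_t$ is a log-Poincar\'e type measure spread essentially uniformly over roughly $|\log|t||^{n}$ dyadic polyannuli, and the rescaled exponent $\alpha/|\log|t||$ must be spent on summing local Skoda bounds (with uniform Lelong-number control supplied by the ambient extension) over these $|\log|t||^{n}$ scales. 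For instance, for $v=\gamma\log|z_1|$ one computes directly that $\int_{X_t}e^{-\alpha v/|\log|t||}\,d\mu_t\sim(e^{\alpha\gamma}-1)/(\alpha\gamma)$, which is bounded precisely because the slope $\gamma$ is uniformly bounded --- not because any density is small. Making this uniform over all of $PSH(\mathbb{P}^N,\omega_{FS})$ and over all strata is the substance of \cite{LiuniformSkoda}, and your proposal does not yet contain an argument for it.
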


\subsection{Kolodziej's estimate on pluripotentials}\label{Toolsfrompsh}

Given an $n$-dimensional K\"ahler manifold $(Y,\omega)$, for $\phi\in PSH(Y,\omega)\cap L^\infty$, pluripotential theory allows one to make sense of the Monge-Amp\`ere (MA) measure $\omega_\phi^n$, generalising the notion of volume forms. A basic problem is to estimate $\phi$  from a priori bounds on $\omega_\phi^n$. A prototypical result is (\cf \cite[section 2.2]{LiFermat} for an exposition based on \cite{EGZ}\cite{EGZ2}):

\begin{thm}\label{pluripotentialthm1}
	Let $(Y, \omega)$ be a compact K\"ahler manifold, and $\phi\in PSH(Y,\omega)\cap C^0$, such that $\omega_\phi^n$ is an absolutely continuous measure. Assume there are positive constants $\alpha, A$, such that the Skoda type estimate holds with respect to $\omega_\phi^n$:
	\begin{equation}\label{Skodaassumption}
	\int_Y e^{-\alpha u}  \frac{\omega_\phi^n }{ \text{Vol}(Y) } \leq A, \quad \forall u\in PSH(Y,\omega) \text{ with } \sup_Y u=0.
	\end{equation}
\begin{itemize}

\item   For fixed $n,\alpha,A$, there is number $B(n,\alpha,A)$, such that if $\frac{ \int_{ \phi\leq -t_0} \omega_\phi^n}{  \text{Vol}(Y)  }< (2B)^{-2n}$ for some $t_0$, then $\min \phi\geq -t_0- 4B (\frac{ \int_{ \phi\leq -t_0} \omega_\phi^n}{  \text{Vol}(Y)} )^{1/2n}$.  

\item   If $\sup_Y \phi=0$, then $\norm{\phi}_{C^0} \leq C(n,\alpha,A)$.
\end{itemize}

\end{thm}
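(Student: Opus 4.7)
The plan is to follow Kolodziej's pluripotential method, centered on the global Monge-Amp\`ere capacity
\[
\text{Cap}_\omega(E)=\sup\left\{\int_E \omega_u^n : u\in PSH(Y,\omega),\ -1\leq u\leq 0\right\}.
\]
The second conclusion follows immediately from the first: applied to $\phi$ itself (which has $\sup_Y\phi=0$), the Skoda hypothesis gives $\int_{\{\phi\leq -t_0\}}\omega_\phi^n/\text{Vol}(Y)\leq Ae^{-\alpha t_0}$, which can be made smaller than $(2B)^{-2n}$ by taking $t_0=t_0(n,\alpha,A)$ large, whereupon the first conclusion yields $\norm{\phi}_{C^0}\leq t_0+O(1)$. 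The substance is therefore the first conclusion.

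For this I would establish two main ingredients. \emph{Step 1 (capacity--mass inequality).} The Bedford--Taylor comparison principle gives
\[
s^n\,\text{Cap}_\omega(\{\phi<-t-s\})\leq \int_{\{\phi<-t\}}\omega_\phi^n,\qquad s,t>0,
\]
obtained by testing an admissible $u$ (with $-1\leq u\leq 0$) against $\phi$ on the set $\{\phi+su<-t\}\subset\{\phi<-t\}$ and exploiting $\omega_{\phi+su}^n\geq s^n\omega_u^n$ in the integrand. \emph{Step 2 (super-polynomial decay of mass on small-capacity sets).} Using the $\omega$-psh relative extremal function $u_E^*$ attached to a Borel set $E$, suitably renormalized so that $\sup_Y u_E^*=0$, one places $E$ essentially inside a sublevel set $\{u_E^*\leq -c\,\text{Cap}_\omega(E)^{-1/n}\}$. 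Applying the Skoda hypothesis to $u_E^*$ then yields
\[
\int_E \omega_\phi^n/\text{Vol}(Y)\leq C\exp\!\left(-\alpha'\,\text{Cap}_\omega(E)^{-1/n}\right)\leq C'\,\text{Cap}_\omega(E)^{1+\eta}
\]
for some $\eta=\eta(\alpha)>0$ depending only on the Skoda data.

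Combining Steps 1 and 2 in terms of $h(t):=(\text{Cap}_\omega(\{\phi<-t\}))^{1/n}$ produces a functional inequality of the form $h(t+s)\leq (C'/s)\,h(t)^{1+\eta}$, valid once $h(t)$ is small enough. A De Giorgi--Kolodziej iteration then concludes: one inductively chooses increments $s_k$ of size comparable to a suitable power of $h(t_k)$ so that $h(t_{k+1})\leq \tfrac12 h(t_k)$, arranges that $\sum s_k\leq 4B\,\mu(\{\phi\leq -t_0\})^{1/2n}$, and obtains $h(t_\infty)=0$, hence $\min\phi\geq -t_\infty$. The principal technical obstacle is Step 2: rigorously constructing and normalizing the relative extremal function on a compact K\"ahler manifold and extracting the positive gain $\eta$ from the Skoda constant $\alpha$ requires the Bedford--Taylor $L^\infty$-theory together with careful tracking of how rescaling interacts with exponential integrability. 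Once Step 2 is in place, the iteration is essentially bookkeeping (and the precise exponent $1/2n$ reflects the interplay between the two steps), as in the EGZ-style exposition in \cite{LiFermat}.
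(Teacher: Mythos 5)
Your proposal is correct and is exactly the standard Kolodziej/EGZ capacity argument; the paper itself states Theorem \ref{pluripotentialthm1} without proof, deferring to \cite[section 2.2]{LiFermat} (an exposition based on \cite{EGZ}, \cite{EGZ2}), which proceeds by precisely your three steps: the comparison-principle capacity--mass bound, the Skoda-to-capacity domination via extremal functions, and the De Giorgi iteration. The only caveat is a small slip in Step 1 --- the inclusion chain should be $\{\phi<-t-s\}\subset\{\phi< su-t\}\subset\{\phi<-t\}$ (using $-s\leq su\leq 0$) rather than $\{\phi+su<-t\}\subset\{\phi<-t\}$ --- but this does not affect the argument.
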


The strength of this result is that it still applies when the complex/K\"ahler structures are highly degenerate, as it distills the dependence on $(Y,\omega)$ to only 3 constants $n,\alpha,A$. A minor variant gives a criterion for two K\"ahler potentials to be close to each other.

\begin{cor}\label{StabilityestimateKolodziej}
	(Stability estimate) Let $(Y,\omega)$ be a compact K\"ahler manifold, and $\phi, \psi\in PSH(Y,\omega)\cap C^0$, such that $\omega_\psi^n$ is absolutely continuous. Assume $\norm{\phi}_{C^0} \leq A'$ and the Skoda type estimate (\ref{Skodaassumption}). Then there is a number $B(n,A, A', \alpha)$, such that if $\frac{ \int_{ \psi-\phi\leq -t_0} \omega_\psi^n}{  \text{Vol}(Y)  }< (2B)^{-2n}$ for some $t_0$, then \[
	\min (\psi-\phi)\geq -t_0- 4B \left(\frac{ \int_{ \psi-\phi\leq -t_0} \omega_\psi^n}{  \text{Vol}(Y)} \right)^{1/2n}.
	\].   
\end{cor}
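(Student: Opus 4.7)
The plan is to follow the proof of Theorem \ref{pluripotentialthm1} verbatim, with the bounded (but not necessarily $\omega$-psh) difference $\rho := \psi - \phi$ playing the role that the $\omega$-psh function $\phi$ played there. The De Giorgi-type iteration underlying Kolodziej's argument requires only two analytic inputs: the Bedford--Taylor comparison principle, applied to a pair of $\omega$-psh functions, and the Skoda hypothesis (\ref{Skodaassumption}), which controls Lebesgue-type mass in terms of Monge--Amp\`ere capacity. Both remain available after the substitution, provided the constant $B$ is allowed to absorb the bookkeeping caused by $\|\phi\|_{C^0}\leq A'$.

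Set $U(t) := \{\rho < -t\}$ and $\mu(t) := \mathrm{Vol}(Y)^{-1}\int_{U(t)}\omega_\psi^n$. The heart of the argument is a capacity estimate of the form
\[
s^n\,\mathrm{Cap}\bigl(U(t+c_{A'} s)\bigr) \;\leq\; C\,\mu(t),\qquad 0<s<1,
\]
where $c_{A'}$ is a constant depending only on $A'$. To prove it, fix any capacity-testing function $v\in PSH(Y,\omega)$ with $-1\leq v\leq 0$ and apply the Bedford--Taylor comparison principle to the pair $u=\psi$ and $w=(1-s)\phi + sv - t'$, both of which are $\omega$-psh. A direct computation using $\|\phi\|_{C^0}\leq A'$ and $-1\leq v\leq 0$ shows that, for $t'$ chosen appropriately in terms of $t$, one has the sandwich $U(t+c_{A'}s)\subset\{u<w\}\subset U(t)$. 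A short calculation gives $\omega+dd^c w = (1-s)\omega_\phi + s\omega_v$, so the binomial expansion of $\omega_w^n$ yields the pointwise lower bound $\omega_w^n \geq s^n\omega_v^n$, and the comparison inequality $\int_{\{u<w\}}\omega_w^n \leq \int_{\{u<w\}}\omega_\psi^n$ then produces
\[
s^n \int_{U(t+c_{A'}s)} \omega_v^n \;\leq\; \mu(t)\,\mathrm{Vol}(Y).
\]
Taking the supremum over admissible $v$ gives the displayed capacity bound.

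The hypothesis (\ref{Skodaassumption}) on $\omega_\psi^n$ now feeds into the standard Skoda--Zeriahi lemma to convert this capacity bound into a polynomial decay of the form $\mu(t+c_{A'}s)\leq C(n,\alpha,A)\,s^{-2n}\mu(t)^2$. From here the De Giorgi iteration used in the proof of Theorem \ref{pluripotentialthm1} runs word-for-word and yields $\min\rho \geq -t_0 - 4B\mu(t_0)^{1/2n}$ whenever $\mu(t_0)<(2B)^{-2n}$, with $B=B(n,\alpha,A,A')$; the $A'$-dependence enters only through the shift constant $c_{A'}$.

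The main obstacle is pinning down the convex combination $w = (1-s)\phi + sv - t'$ that simultaneously (i) remains $\omega$-psh, (ii) keeps $\{u<w\}$ sandwiched between two shifted level sets of $\rho$, and (iii) retains enough positivity in $\omega_w^n$ to extract the $s^n$ factor needed for the capacity bound. The uniform bound $\|\phi\|_{C^0}\leq A'$ is precisely what is needed to carry out (ii) with a shift linear in $s$; once this calibration is in place, the rest is a direct transcription of the argument behind Theorem \ref{pluripotentialthm1}.
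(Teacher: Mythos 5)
Your proposal is correct and is exactly the ``minor variant'' of Theorem \ref{pluripotentialthm1} that the paper intends (the paper gives no written proof, deferring to the Kolodziej/EGZ argument): the comparison principle applied to $\psi$ versus $(1-s)\phi+sv-t'$ with the sandwich $U(t+(2A'+1)s)\subset\{\psi<(1-s)\phi+sv-t'\}\subset U(t)$, the lower bound $\omega_w^n\geq s^n\omega_v^n$, the Skoda--Zeriahi conversion to $\mu(t+c_{A'}s)\leq Cs^{-2n}\mu(t)^2$, and the De Giorgi iteration are precisely the standard chain, with the $A'$-dependence entering only through the shift constant as you say.
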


\subsection{$L^1$-stability estimate}

In complex pluripotential theory, an $L^1$-stability estimate is an assertion about the $C^0$-closeness of two K\"ahler potentials given that their volume densities are close in $L^1$. We now adapt an argument of Kolodziej \cite{KolodziejL1} to prove a uniform version which allows the complex structure to be highly degenerate and the volume to collapse. Our formulation also brings out the asymmetrical role of the two K\"ahler potentials; in fact one of them is set to zero. We do not pursue optimality.

\begin{lem}\label{Comparisonprinciple}
(Comparison principle)\cite[Thm. 2.1]{KolodziejL1}
If $u$ and $v$ are $\omega$-psh on $Y$, then on $\Omega=\{ u<v  \}$, we have
\[
\int_{\Omega} \omega_u^n\geq \int_{\Omega} \omega_v^n.
\]
\end{lem}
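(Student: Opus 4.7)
The plan is to adapt the classical Bedford--Taylor argument, using the auxiliary $\omega$-psh function $u_\epsilon:=\max(u,v-\epsilon)$ and passing to the limit $\epsilon\to 0^+$. I will work under the standing assumption (implicit in the Koł{}odziej reference and sufficient for the intended applications) that $u,v$ are bounded, so that the Monge--Amp\`ere measures $\omega_u^n$ and $\omega_v^n$ are well-defined Borel probability-type measures of total mass $\int_Y\omega^n$.

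First I would record the two ingredients I need. \textbf{(i) Cohomological invariance:} for any bounded $\phi\in PSH(Y,\omega)$ one has $\int_Y \omega_\phi^n = \int_Y \omega^n$, independent of $\phi$. \textbf{(ii) Locality on open sets:} if $\phi=\psi$ on an open set $U\subset Y$, then $\mathbf{1}_U\omega_\phi^n = \mathbf{1}_U\omega_\psi^n$; in particular $u_\epsilon$ is $\omega$-psh (being the max of two $\omega$-psh functions), $u_\epsilon=u$ on the open set $U_\epsilon:=\{u>v-\epsilon\}$, and $u_\epsilon=v-\epsilon$ on the open set $V_\epsilon:=\{u<v-\epsilon\}$, so
\[
\mathbf{1}_{U_\epsilon}\omega_{u_\epsilon}^n = \mathbf{1}_{U_\epsilon}\omega_u^n,\qquad \mathbf{1}_{V_\epsilon}\omega_{u_\epsilon}^n = \mathbf{1}_{V_\epsilon}\omega_v^n.
\]

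Next I would decompose $Y=U_\epsilon\sqcup E_\epsilon\sqcup V_\epsilon$ with $E_\epsilon:=\{u=v-\epsilon\}$, integrate against $\omega_{u_\epsilon}^n$, and apply (i) to both $\omega_{u_\epsilon}^n$ and $\omega_u^n$:
\[
\int_{U_\epsilon}\omega_u^n + \int_{E_\epsilon}\omega_{u_\epsilon}^n + \int_{V_\epsilon}\omega_v^n \;=\; \int_Y\omega_{u_\epsilon}^n \;=\; \int_Y\omega_u^n \;=\; \int_{U_\epsilon}\omega_u^n + \int_{E_\epsilon}\omega_u^n + \int_{V_\epsilon}\omega_u^n.
\]
Cancelling the $U_\epsilon$-terms and dropping the nonnegative quantity $\int_{E_\epsilon}\omega_{u_\epsilon}^n$ on the left,
\[
\int_{V_\epsilon}\omega_v^n \;\le\; \int_{V_\epsilon}\omega_u^n + \int_{E_\epsilon}\omega_u^n \;=\; \int_{\{u\le v-\epsilon\}}\omega_u^n.
\]

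Finally I would let $\epsilon\to 0^+$. The sets $V_\epsilon$ and $\{u\le v-\epsilon\}$ both increase to $\Omega=\{u<v\}$ (any point with $u<v$ satisfies $u\le v-\epsilon$ for some $\epsilon>0$), so monotone convergence applied to the Borel measures $\omega_v^n$ and $\omega_u^n$ yields $\int_\Omega\omega_v^n\le \int_\Omega\omega_u^n$, as required.

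The substantive step is the locality assertion (ii), which in the K\"ahler setting is Bedford--Taylor's theorem that the complex Monge--Amp\`ere operator on bounded psh functions is local in the plurifine topology. Once this is granted, everything else is bookkeeping. If one wanted to drop the boundedness hypothesis, one would replace $\omega_u^n$ and $\omega_v^n$ by non-pluripolar products and appeal to the version of (i)--(ii) in that framework; this is not needed here, since in the paper's applications the relevant potentials are continuous.
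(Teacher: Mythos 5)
Your proof is correct and is essentially the standard argument behind the cited result: the paper itself gives no proof of this lemma, simply quoting Ko\l{}odziej's Theorem 2.1, whose proof is exactly this Bedford--Taylor-style bookkeeping with $\max(u,v-\epsilon)$, cohomological mass invariance, and the limit $\epsilon\to 0^+$. The only imprecision is that $U_\epsilon=\{u>v-\epsilon\}$ and $V_\epsilon=\{u<v-\epsilon\}$ are in general only plurifine open rather than open in the usual topology (for merely usc $u,v$), but you correctly identify plurifine locality of the Monge--Amp\`ere operator as the substantive ingredient at the end, so this is a matter of wording rather than a gap.
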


\begin{lem}\label{Concavitylem}
(Concavity of $det^{1/n}$) On an open domain, suppose $u, v$ are continuous $\omega$-psh functions, with 
\[
\omega_u^n = f\omega^n ,  \quad \omega_v^n= g\omega^n
\]
for $f, g\in L^\infty$. 
Then for $0<s<1$, we have $\omega_{su+(1-s)v}^n \geq (sf^{1/n}+ (1-s)g^{1/n})^n\omega^n$.
\end{lem}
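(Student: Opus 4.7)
\emph{Proof plan.} The lemma is the pluripotential-theoretic upgrade of the Minkowski (Brunn-Minkowski) determinant inequality for positive Hermitian matrices, and I would proceed by the standard two-step strategy: a pointwise algebraic inequality in the smooth case, followed by regularization and a passage to the limit to cover continuous $\omega$-psh potentials.

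In the smooth case, at any point I fix a local unitary frame that diagonalizes $\omega$, so that $\omega_u$ and $\omega_v$ are represented by positive Hermitian matrices $A, B$. Then $\omega_{su+(1-s)v}^n/\omega^n = \det(sA+(1-s)B)$, $f = \det A$, and $g = \det B$, and the Minkowski determinant inequality $\det(sA+(1-s)B)^{1/n} \geq s(\det A)^{1/n} + (1-s)(\det B)^{1/n}$ yields the claim pointwise, hence as measures. If $\omega_u$ or $\omega_v$ is only semi-positive, replace $u$ by $u+\eta|z|^2$ locally and let $\eta\downarrow 0$.

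To handle continuous $\omega$-psh potentials, I would exploit the locality of the statement: on a small ball write $\omega = dd^c\rho$ with $\rho$ smooth strictly psh, and pass to the ordinary psh functions $\tilde u = u+\rho$, $\tilde v = v+\rho$. Mollify to produce $\tilde u_\epsilon = \tilde u \ast \chi_\epsilon$ and $\tilde v_\epsilon = \tilde v \ast\chi_\epsilon$, which are smooth psh on a slightly smaller ball and decrease uniformly to $\tilde u, \tilde v$ by continuity. Applying the smooth case gives
\[
\omega_{s\tilde u_\epsilon+(1-s)\tilde v_\epsilon}^n \geq \bigl(sf_\epsilon^{1/n}+(1-s)g_\epsilon^{1/n}\bigr)^n \omega^n,
\]
with $f_\epsilon = \omega_{\tilde u_\epsilon}^n/\omega^n$ and $g_\epsilon = \omega_{\tilde v_\epsilon}^n/\omega^n$ smooth. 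By the Bedford-Taylor continuity of the Monge-Amp\`ere operator along decreasing sequences of bounded continuous psh functions, the left-hand side converges weakly to $\omega_{su+(1-s)v}^n$.

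The real work is the limit on the right. Testing against an arbitrary non-negative continuous $\chi$ and taking $\liminf_\epsilon$, the lemma reduces to the semicontinuity statement
\[
\int \chi \bigl(sf^{1/n}+(1-s)g^{1/n}\bigr)^n \omega^n \;\leq\; \liminf_\epsilon \int \chi\bigl(sf_\epsilon^{1/n}+(1-s)g_\epsilon^{1/n}\bigr)^n\omega^n.
\]
This is where I expect the main obstacle: the nonlinear map $x\mapsto x^{1/n}$ does not commute with weak-$\ast$ limits in $L^\infty$, so the weak convergence of MA measures $f_\epsilon\omega^n \to f\omega^n$ delivered by Bedford-Taylor is not by itself sufficient. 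The plan to overcome this is to use the $L^\infty$ hypothesis on $f, g$ as a compactness input, extract a subsequence along which $f_\epsilon, g_\epsilon$ converge almost everywhere (using Lebesgue differentiation at density points for $f, g$, which are bounded), and then apply Fatou's lemma to the non-negative integrand. Justifying the a.e.\ convergence of the Bedford-Taylor densities under mollification (beyond the merely weak convergence of the full MA measures) is the delicate technical core of the proof, and is precisely where Kolodziej's framework for continuous solutions of the complex Monge-Amp\`ere equation enters.
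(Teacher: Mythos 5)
Your smooth case is exactly the paper's (the pointwise concavity of $A\mapsto\det^{1/n}A$ on positive Hermitian matrices), and the paper likewise disposes of the general case with the single phrase ``an approximation argument'', citing Kolodziej's Lemma 1.2. But the approximation argument you actually propose has a gap at precisely the step you flag as the ``delicate technical core'', and the tools you invoke do not fill it. The density $f_\epsilon$ of $\omega_{\tilde u_\epsilon}^n$ is \emph{not} the mollification $f\ast\chi_\epsilon$ --- the Monge--Amp\`ere operator does not commute with convolution --- so the Lebesgue differentiation theorem says nothing about $f_\epsilon\to f$ almost everywhere; nor do you have an a priori uniform $L^\infty$ bound on $f_\epsilon$ that would even let you extract a weak-$\ast$ limit. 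Worse, even granting $f_\epsilon\rightharpoonup f$ and $g_\epsilon\rightharpoonup g$ weak-$\ast$, the integrand $\Phi(x,y)=(sx^{1/n}+(1-s)y^{1/n})^n$ is \emph{concave} and $1$-homogeneous in $(x,y)\in\R_{\ge0}^2$, so weak convergence only yields $\limsup_\epsilon\int\chi\,\Phi(f_\epsilon,g_\epsilon)\,\omega^n\le\int\chi\,\Phi(f,g)\,\omega^n$ --- the opposite of the lower semicontinuity you need. As written, the Fatou/a.e.-convergence route does not close.

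The argument the paper is actually pointing to (Kolodziej's Lemma 1.2; see also Dinew's inequality for mixed Monge--Amp\`ere measures) is structured differently: one first proves the mixed inequality $\omega_u^k\wedge\omega_v^{n-k}\ge f^{k/n}g^{(n-k)/n}\omega^n$ when $f,g$ are \emph{constants} (via auxiliary Dirichlet problems and the comparison principle), then extends to general $f,g\in L^\infty$ by approximating the \emph{densities} from below by simple functions and using quasi-continuity and the locality of Bedford--Taylor measures on the corresponding level sets --- the potentials are never mollified. The lemma then follows from the expansion $\omega_{su+(1-s)v}^n=\sum_{k=0}^{n}\binom{n}{k}s^k(1-s)^{n-k}\,\omega_u^k\wedge\omega_v^{n-k}$ together with the multinomial identity $\sum_k\binom{n}{k}s^k(1-s)^{n-k}f^{k/n}g^{(n-k)/n}=(sf^{1/n}+(1-s)g^{1/n})^n$. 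If you want a self-contained proof rather than a citation, that is the route to take; note also the paper's own remark immediately after Theorem \ref{UniformL1stabilitythm}, which records the author's reservations about how Kolodziej's Lemma 1.2 gets applied on non-open sets for general densities --- a sign that this approximation step is genuinely delicate and should not be waved through.
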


\begin{proof}
In the smooth case this is a pointwise inequality expressing the concavity of $A\mapsto \det^{1/n} A$ on the set of Hermitian matrices. In general one shows this by an approximation argument \cite[Lemma 1.2]{KolodziejL1}.
\end{proof}

\begin{thm}\label{UniformL1stabilitythm}
(Uniform $L^1$-stability) Let $(Y,\omega)$ be a compact K\"ahler manifold, and $\phi\in PSH(Y,\omega)\cap C^0$, satisfying the complex MA equations
\[
\frac{\omega^n }{ \text{Vol}(Y) }=d\mu,  \quad \frac{\omega_\phi^n }{ \text{Vol}(Y) }=d\nu
\]
for probability measures $d\mu$ and $d\nu$.
Assume
\begin{itemize}
\item There is a Skoda estimate
\[
\int_Y e^{-\alpha u}  d\mu \leq A, \quad \forall u\in PSH(Y,\omega) \text{ with } \sup_Y u=0.
\]

\item The complement of $E_0=\{ \phi>0  \}$ has a mass lower bound \[ \int_{  E_0^c} d\mu \geq \lambda>0. \]

\item ($L^1$-stability assumption) The total variation $\int_Y |d\mu-d\nu| \leq s^{2n+3}<1$. 

\item $\phi$ is smooth away from a (possibly empty) closed subset $S$ with $d\mu$-measure zero. Globally $\norm{\phi}_{C^0}\leq A'$.
\end{itemize}
Then for $0<s<s_0(\lambda, n, \alpha, A, A')\ll 1$, there is a uniform estimate 
\[
\sup_Y \phi \leq C(\lambda, n, \alpha, A, A') s.
\]
\end{thm}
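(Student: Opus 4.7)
The plan is to follow Kolodziej's $L^1$-stability strategy from \cite{KolodziejL1}, adapted to this asymmetric setting in which the Skoda estimate is only assumed for $d\mu = \omega^n/\text{Vol}(Y)$ and the potential $\phi$ is normalized by a mass bound on $E_0^c$ rather than by a sup-normalization. Heuristically, if one had $d\nu = d\mu$ exactly then $\phi$ would be constant and the mass condition would force that constant to be $\leq 0$; the theorem asserts the quantitative stability of this rigid picture under an $L^1$-perturbation of size $s^{2n+3}$, with linear modulus.

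The main technical input is a density-comparison estimate produced by combining Lemmas \ref{Comparisonprinciple} and \ref{Concavitylem}. For a parameter $\tau\in (0,1)$ to be chosen of order $s$, the convex combination $u_\tau := (1-\tau)\phi$ is $\omega$-psh, and applying Lemma \ref{Concavitylem} together with the pointwise inequality $(\tau + (1-\tau)x)^n \geq \tau^n + (1-\tau)^n x^n$ with $x = (\omega_\phi^n/\omega^n)^{1/n}$ yields
\[
\omega_{u_\tau}^n \;\geq\; \tau^n\,\omega^n + (1-\tau)^n\,\omega_\phi^n.
\]
Applying Lemma \ref{Comparisonprinciple} to compare $u_\tau$ with suitable constant shifts of $\phi$ on a sublevel set $\{\phi < -c\}$, and using $1 - (1-\tau)^n \leq n\tau$, yields after dividing by $\text{Vol}(Y)$ the density-comparison inequality
\[
\nu(\{\phi < -c\}) \;\geq\; \tfrac{\tau^{n-1}}{n}\,\mu(\{\phi < -c\}),
\]
with an analogous statement on super-level sets. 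Combined with the $L^1$-closeness $|\mu(A) - \nu(A)| \leq s^{2n+3}$ and the mass lower bound $\mu(E_0^c)\geq \lambda$, this produces constraints on the measures $\mu(\{\phi>c\})$ and $\nu(\{\phi>c\})$ at thresholds $c$ of various sizes.

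Separately, applying the Skoda hypothesis to the sup-normalized $\omega$-psh function $\phi - \sup\phi$ and integrating over $E_0^c$ gives $e^{\alpha \sup\phi}\lambda \leq A$, i.e., the initial a priori bound $\sup\phi \leq C_0(\alpha, A, \lambda)$. Starting from this coarse bound, one iterates Kolodziej's potential estimate (Corollary \ref{StabilityestimateKolodziej}), each round using the density-comparison inequality and the $L^1$-error to shrink the super-level threshold; the exponent $2n+3$ in the $L^1$-hypothesis is calibrated precisely to accommodate the compounded $(\cdot)^{1/2n}$ losses in Kolodziej's descent plus the $\tau^{n-1}$ factor from the concavity step. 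After finitely many iterations the threshold reaches $O(s)$, yielding $\sup_Y\phi \leq Cs$.

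The hardest step will be executing this iteration cleanly. Corollary \ref{StabilityestimateKolodziej} as stated produces a lower bound on $\min(\psi - \phi)$, whereas here we need an upper bound on $\sup\phi$; since $-\phi$ is not $\omega$-psh, a direct sign-flip is unavailable, so Kolodziej's descent must be re-implemented in the direction relevant for our hypotheses, exploiting that the Skoda estimate for $d\mu$ controls super-level sets of $\phi$ precisely through the normalization $\mu(E_0^c)\geq\lambda$. A secondary subtlety is the balancing of the $\tau^{n-1}$ loss from concavity against the $L^1$-error $s^{2n+3}$, ensuring the iteration terminates at the linear scale $Cs$ rather than at some weaker $s^\alpha$ with $\alpha<1$; this is exactly what the choice $2n+3$ is designed to make possible. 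The hypothesis that $\phi$ is smooth away from the $\mu$-null set $S$ is used to ensure that the pointwise concavity inequality and all other pluripotential manipulations are unambiguous.
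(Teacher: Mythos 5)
Your proposal is missing the key construction that makes the paper's argument work, and the substitute you offer pushes the densities in the wrong direction. The paper's proof is not an iteration at all: it introduces an auxiliary continuous $\omega$-psh potential $\rho$ solving $\omega_\rho^n/\text{Vol}(Y)=\frac{1}{\mu(E_0)}\,d\mu\mres E_0$ with $\sup_Y\rho=0$, bounded in $C^0$ by Theorem \ref{pluripotentialthm1} via the Skoda hypothesis. The whole point is that the mass bound $\mu(E_0^c)\geq\lambda$ makes the density of $\omega_\rho^n/\text{Vol}(Y)$ relative to $d\mu$ on $E_0$ at least $(1-\lambda)^{-1}>1$, so the concavity Lemma \ref{Concavitylem} applied to $s\rho+(1-s)\phi$ gives, off the bad set $G=\{1-s^2\geq d\nu/d\mu\}$, a density at least $\bigl(s\,\mu(E_0)^{-1/n}+(1-s)(1-s^2)^{1/n}\bigr)^n\geq 1+qs$ with $q>0$ depending on $\lambda,n$. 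The comparison principle on $E=\{(1-s)\phi+s\rho-as>0\}\subset E_0$ then forces $(1+qs)\,\mu(E\setminus G)\leq\mu(E)$, and since $\mu(G)\leq s^{2n+1}$ by Chebyshev and the $L^1$-hypothesis, one gets $\mu(E)\lesssim s^{2n}$. A single application of Corollary \ref{StabilityestimateKolodziej} with $\psi=0$ finishes: the ``sign-flip'' you worry about is immediate, since $\min_Y(0-\phi)\geq -t_0-4B(\cdots)^{1/2n}$ is exactly an upper bound on $\sup_Y\phi$, and $\{0-\phi\leq -t_0\}=E'\subset E$ has small $\omega^n$-measure. No descent or iteration is needed.

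Your convex combination $u_\tau=(1-\tau)\phi$ mixes $\phi$ with the \emph{zero} potential, whose density relative to $d\mu$ is exactly $1$; the resulting lower bound $\tau^n+(1-\tau)^n\,d\nu/d\mu\approx 1-(n-1)\tau$ lies \emph{below} $1$, so the comparison principle extracts no smallness of any super-level set. This is precisely why the paper solves an auxiliary Monge--Amp\`ere equation to manufacture a density strictly greater than $1$ on $E_0$; the hypothesis $\mu(E_0^c)\geq\lambda$ enters the proof only through that construction, whereas in your sketch it never actually engages with the concavity step. The ``density-comparison inequality'' $\nu(\{\phi<-c\})\geq\frac{\tau^{n-1}}{n}\mu(\{\phi<-c\})$ you derive degenerates as $\tau\to 0$ and transfers smallness only at a cost of $\tau^{-(n-1)}$, and you give no concrete scheme by which the iteration you sketch terminates at the linear scale $Cs$; indeed you flag exactly these two points (re-implementing Kolodziej's descent in the right direction, and balancing the $\tau^{n-1}$ loss) as the hardest steps without resolving them. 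As written, the central mechanism of the proof is absent.
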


\begin{proof}
We can reduce to the case with $d\mu(E_0)\geq \frac{1}{2}$ by shifting $\phi$ by a constant.
We construct an auxiliary continuous $\omega$-psh function $\rho$ by solving the complex MA equation with $L^\infty$-density \cite{EGZ}
\[
\frac{ \omega_\rho^n}{ \text{Vol}(Y) } = \frac{1}{ d\mu(E_0) } d\mu \mres  E_0, \quad \sup_{Y} \rho=0.
\]
where $d\mu \mres  E_0(F)= d\mu(E\cap F)$ is the restricted measure. By Theorem \ref{pluripotentialthm1} we have $\norm{\rho}_{C^0} \leq C(n,\alpha, A)$, since the RHS measure satisfies a Skoda estimate. We choose $a=\max(C(n,\alpha, A), A'  )$, so that
\[
-a\leq \rho\leq 0, \quad \phi\geq -a,
\]
implying the set inclusion
\[
E'=\{  \phi>2as+ s \max \phi   \} \subset E=\{ (1-s)\phi+s\rho-as>0  \} \subset E_0.
\]
Our next goal is to show $d\mu(E')$ is small.

Let $G=\{ 1-s^2 \geq \frac{ d\nu}{d\mu}  \}\subset Y\setminus S$. On the open set $E_0\setminus (S\cup G)$, by the concavity Lemma \ref{Concavitylem},
\[
\frac{ \omega_{s\rho+(1-s)\phi}^n }{  \text{Vol}(Y)}\geq \left( sd\mu(E_0) ^{-1/n} + (1-s)(1-s^2)^{1/n}  \right)^n d\mu.
\] 
Now $d\mu(E_0)^{-1/n}\geq (1-\lambda)^{-1/n}$ by assumption. Choose $0<q< n\{ (1-\lambda)^{-1/n}-1\}$, so for $0<s\ll 1$ depending on $\lambda, n$, by Taylor expansion in $s$,
\[
\left( sd\mu(E_0) ^{-1/n} + (1-s)(1-s^2)^{1/n}  \right)^n \geq 1+qs.
\] 	
Combining this with the comparison principle Lemma \ref{Comparisonprinciple}, and the assumption $d\mu(S)=0$,
\[
(1+qs) \int_{E\setminus G} d\mu \leq \int_{E}\frac{ \omega_{s\rho+(1-s)\phi}^n }{  \text{Vol}(Y)} \leq \int_E \frac{ \omega^n }{  \text{Vol}(Y)}= \int_E d\mu.
\]	
On the other hand, by the definition of $G$ and the $L^1$-stability assumption, 
\[
d\mu(G) \leq s^{-2} \int_G (d\mu-d\nu) \leq s^{-2}\int_Y |d\mu-d\nu|\leq s^{2n+1},
\]
hence
\[
(1+qs) \int_{E\setminus G} d\mu \leq \int_{E\setminus G} d\mu+ s^{2n+1}.
\]
We conclude $\int_{E\setminus G} d\mu \leq q^{-1} s^{2n}$, so
\[
\int_{E'} d\mu \leq \int_E d\mu \leq q^{-1}s^{2n}+ s^{2n+1} \leq (q^{-1}+1) s^{2n}. 
\]

We now apply the stability estimate Cor. \ref{StabilityestimateKolodziej} to compare the potentials $\phi$ and $0$, to see for $s$ sufficiently small depending on $n,A,A',\alpha, \lambda$,
\[
\min_Y(-\phi)\geq -2as-s \max_Y \phi- 4B  ( \int_{E'} d\mu)^{1/2n}\geq -2as-s \max_Y \phi- 4B(1+q^{-1})^{1/2n}s,
\]
whence $\max_Y \phi \leq C(n, A, A', \alpha, \lambda)s$ as required.
\end{proof}

\begin{rmk}
It is not clear to the author why in Kolodziej's original argument \cite[Lemma 1.2]{KolodziejL1} applies  in the proof of \cite[Thm. 4.1]{KolodziejL1}, as $E_0\setminus G$ is not an open domain if one considers general $L^p$-densities.
\end{rmk}

\subsection{Savin's small perturbation theorem}

Savin \cite{Savin} proved that for a large class of second order elliptic equations satisfying certain structural conditions, any viscosity solution $C^0$-close to a given smooth solution has interior $C^{2,\gamma}$-bound. In particular this applies to complex MA equation. Combined with the Schauder estimate,

\begin{thm}\label{Savin}
Fix $k\geq 2$ and  $0<\gamma<1$.
On the unit ball, let $v$ be a given smooth solution to the complex Monge-Amp\`ere equation $(dd^c v)^n=1$. Then there are constants $0<\kappa\ll 1$ and $C$ depending on $n, k,\gamma, \norm{v}_{C^{k,\gamma}}$, such that if 
\[
(dd^c (u+v))^n=1+f, \quad \norm{f}_{C^{k-2,\gamma}}<\kappa,
\]
and $\norm{u}_{C^0}<\kappa$, then $\norm{u}_{C^{k,\gamma}(B_{1/2})}\leq C\kappa$.
\end{thm}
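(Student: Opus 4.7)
The plan is to apply Savin's small perturbation theorem \cite{Savin} to obtain an interior $C^{2,\gamma}$ estimate on $u$, and then iterate linear Schauder theory to upgrade to $C^{k,\gamma}$. Since $v$ is smooth with $(dd^c v)^n = 1$ on $\bar B_1$, the complex Hessian $\partial_i\bar\partial_j v$ is uniformly positive definite there, with bounds controlled by $\norm{v}_{C^2}$. The operator $F(M) := \det(M)^{1/n}$ on Hermitian matrices is concave and, restricted to any compact subset of the positive-definite cone around the range of $\partial_i\bar\partial_j v$, uniformly elliptic. The complex Monge-Amp\`ere equation rewrites as $F(\partial_i\bar\partial_j(u+v)) = (1+f)^{1/n}$, with $v$ a smooth classical solution of the unperturbed equation. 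Savin's theorem therefore applies once $\kappa$ is chosen small enough (depending on $n,\gamma$ and a few derivatives of $v$) to keep the complex Hessian of $u+v$ inside the ellipticity neighbourhood, and it yields
$$\norm{u}_{C^{2,\gamma}(B_{3/4})} \leq C\bigl(\norm{u}_{C^0(B_1)} + \norm{(1+f)^{1/n}-1}_{C^{0,\gamma}(B_1)}\bigr) \leq C'\kappa.$$
This is exactly the claim when $k=2$.

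For $k\geq 3$, I would bootstrap. Set $w := u+v$ and differentiate $(dd^c w)^n = 1+f$ in a real coordinate direction $\partial_\ell$ to get the linear uniformly elliptic equation
$$n(dd^c w)^{n-1}\wedge dd^c(\partial_\ell u) = \partial_\ell f - n(dd^c w)^{n-1}\wedge dd^c(\partial_\ell v).$$
The leading coefficient form $(dd^c w)^{n-1}$ inherits the regularity of $w$, initially $C^\gamma$ (since $w\in C^{2,\gamma}$) with the right-hand side in $C^{k-3,\gamma}$. Interior Schauder on a slightly smaller ball then promotes $\partial_\ell u$ to $C^{2,\gamma}$, that is $u\in C^{3,\gamma}$. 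Inductively, once $u\in C^{j,\gamma}$ with $2\leq j<k$ is known on a nested subdomain, the coefficients lie in $C^{j-2,\gamma}$ and the right-hand side in $C^{\min(j-2,k-3),\gamma}$, and one further Schauder application gives $u\in C^{j+1,\gamma}$. After $k-2$ such rounds, with the ball shrunk finitely many times to accommodate the Schauder boundary loss and ending on $B_{1/2}$, one obtains $u\in C^{k,\gamma}(B_{1/2})$ with all estimates linear in $\kappa$ and constants depending only on $n,k,\gamma$ and $\norm{v}_{C^{k,\gamma}}$.

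The main obstacle is the first step: checking that the complex Monge-Amp\`ere operator, restricted to a neighbourhood of the reference solution $v$, falls within the scope of Savin's structural hypotheses in real variables. Concavity of $\det^{1/n}$ on the positive Hermitian cone is classical, and uniform ellipticity of its linearisation on a fixed neighbourhood of $\partial_i\bar\partial_j v(x)$ for $x\in B_1$ follows from the strict plurisubharmonicity of the smooth solution $v$ forced by $(dd^c v)^n = 1$. Identifying $\C^n$ with $\R^{2n}$ casts the equation as a fully nonlinear uniformly elliptic equation on the real Hessian in a neighbourhood of $D^2 v$, which is precisely the setting of Savin's theorem; once this verification is in place, the remainder of the argument is routine linear PDE.
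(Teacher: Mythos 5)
Your proposal is correct and follows exactly the route the paper intends: the paper offers no proof beyond the remark that Savin's small perturbation theorem gives the interior $C^{2,\gamma}$ bound for the complex Monge--Amp\`ere equation and that the $C^{k,\gamma}$ statement then follows ``combined with the Schauder estimate,'' which is precisely your two-step argument. The only detail worth making explicit in your bootstrap is that the inhomogeneous term $n(dd^c w)^{n-1}\wedge dd^c(\partial_\ell v)$ is $O(\kappa)$ only after subtracting the identity $n(dd^c v)^{n-1}\wedge dd^c(\partial_\ell v)=\partial_\ell\left[(dd^c v)^n\right]=0$, which is what preserves the linear dependence of the final estimate on $\kappa$.
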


\subsection{Regularity theory for real Monge-Amp\`ere}\label{RegularitytheoryforrealMA}

There is an extensive literature on the local regularity theory for the real Monge-Amp\`ere equation, largely due to the Caffarelli school. The author thanks C. Mooney for bringing some of these results to his attention. All results surveyed here can be found in \cite{Mooney}.

Any convex function on an open set $v: \Omega\subset \R^n\to \R$ has an associated Borel measure called the Monge-Amp\`ere measure, defined by
\[
MA_\R(v)(E)= |\partial v (E)|,
\]
where $|\partial v(E)|$ denotes the Lebesgue measure of the image of the subgradient map on $E\subset \Omega$. Given a Borel measure $\mu$, a solution to $MA(v)=\mu$ is called an \emph{Aleksandrov solution} to 
$
\det(D^2 v)= \mu;
$ 
if $v\in C^2$, this is the classical real Monge-Amp\`ere equation. 
We shall assume a two-sided density bound
\[
\det(D^2 v)=f \text{ in }B_1, \quad 0<\Lambda_1\leq f\leq \Lambda_2.
\]
Let $B_1\setminus \Sigma$ be the set of strictly convex points of $v$, namely there is a supporting hyperplane touching the graph of $v$ only at one point. Then Caffarelli \cite{Caff1}\cite{Caff2}\cite{Caff4} shows
\begin{itemize}
\item
If $f\in C^\gamma(B_1)$, then $v\in C^{2,\gamma}_{loc}(B_1\setminus \Sigma)$. Then by Schauder theory, if $f$ is smooth, then $v$ is smooth in $B_1\setminus \Sigma$.
\item
If $L$ is a supporting affine linear function to $v$, such that the convex set $\{v=L\}$ is not a point. Then $\{v=L\}$ has no extremal point in the interior of $B_1$. 
\item
The above affine linear set $\{v=L\}$ has dimension $k< n/2$. 
\end{itemize}

Mooney \cite{Mooney} shows further that
\begin{itemize}
\item The singular set $\Sigma$ has $(n-1)$-Hausdorff measure zero. Consequently $B_1\setminus \Sigma$ is path connected (because a generic path joining two given points does not intersect a subset of zero $(n-1)$-Hausdorff measure).

\item  The solution $v\in W^{2,1}_{loc}(B_1)$ even if $\Sigma$ is nonempty.
\end{itemize}

\begin{rmk}\label{Mooneyremark}
A classical counterexample of Pogorelov shows that for $n=3$, the singular set $\Sigma$ can contain a line segment. This is generalised by Caffarelli \cite{Caff4}, who for any $k<n/2$ constructs examples where $f$ is smooth but $\Sigma$ contains a $k$-plane. A surprising example of Mooney \cite{Mooney} shows that the Hausdorff dimension of $\Sigma$ can be larger than $n-1-\epsilon$ for any small $\epsilon$. This means the local regularity theory surveyed above is essentially optimal.
\end{rmk}

\section{Nonarchimedean geometry}\label{NAsurvey}

This section is a differential geometer's quick tour into the wonderland of NA geometry. Our goal is to motivate the basic concepts from natural problems in K\"ahler geometry, and thereby build up a dictionary between K\"ahler geometry and NA geometry, instead of giving a systematic survey. The author's viewpoint is heavily influenced by Boucksom et al. \cite{Boucksomsurvey}\cite{Boucksom}\cite{Boucksom1}\cite{Boucksomsemipositive}.
He thanks S. Boucksom and C. Vilsmeier for explanations.

The heuristic mental picture is the following. Complex algebraic geometry has two `orthogonal' transcendental limits. When we look at very high powers of an ample line bundle $L$, the rescaled Fubini-Study metrics can be used to approximate any K\"ahler metric in $c_1(L)$, so \emph{K\"ahler geometry/complex pluripotential theory is the asymptotic limit of complex algebraic geometry}. When we fix a polarisation and degenerate the complex structure, much of the information is encoded by piecewise linear objects appearing from logarithm maps, so \emph{tropical geometry is the tropical limit of complex algebraic geometry}. As we climb high up the tower of snc models, then piecewise linear convex functions can be used to approximate more general convex functions, so \emph{the asymptotic limit of tropical geometry is NA geometry/NA pluripotential theory}. As the reader can guess by completing this Cartesian square, \emph{NA pluripotential theory is the tropical limit of K\"ahler geometry}, because psh functions on a large annulus inside $(\C^*)^n$ resemble convex functions on large domains in $\R^n$. Building more links between NA pluripotential theory and K\"ahler geometry is in fact the main goal of this paper.

\subsection{Volume asymptote and essential skeleton}\label{volumeasymptoteessentialskeleton}

Consider an algebraic Calabi-Yau degeneration family $X\to S\setminus \{0 \}$ as in the Introduction. We are given the holomorphic volume forms $\Omega_t$ on the fibres $X_t$, and let us follow \cite{Boucksom1} to consider the question of calculating the asymptote of $\int_{X_t} \Omega_t \wedge \overline{\Omega}_t$ as $t\to 0$. Since we only care about small $t$, we are free to shrink $S$. For instance, we may assume $dt$ is nowhere vanishing on $S$.

A very useful tool is to fill in the central fibre by choosing an \emph{snc model} (\cf Remark \ref{sncmodel}) $\mathcal{X}$ over $S$. The central fibre $\mathcal{X}_0$ is an snc divisor with components $E_i$ for $i\in I$, and we write $\mathcal{X}_0= \sum_{i\in I} b_i E_i$. In the special case of \emph{semistable snc models} $b_i=1$ for $i\in I$; this can always be achieved after finite base change. The canonical divisor $K_{\mathcal{X}}$ is supported on $\mathcal{X}_0$ as $K_X$ has a trivialising section $\Omega$. We may write $K_{\mathcal{X} }=\sum_i (a_i+b_i-1 )E_i $, so that the \emph{relative log canonical divisor}
\[
K^{log}_{\mathcal{X}/S  }:= K_{\mathcal{X}}- K_S + \mathcal{X}_{0,red}- \mathcal{X}_0=    \sum a_i E_i.
\]
Shifting all $a_i$ by a constant $\kappa$ is equivalent to multiplying $\Omega$ by $t^\kappa$, which gives an elementary factor $|t|^{2\kappa }$ to $\int_{X_t} \Omega_t \wedge \overline{\Omega}_t$. Thus we shall always assume $\min a_i=0$.

It is useful to introduce a \emph{quantitative stratification} on $X_t$ according to the intersection pattern of $E_i$. Let $E_J=\cap_{i\in J} E_i$ for $J\subset I$, which is irreducible if nonempty. Using the distance function of a fixed smooth background K\"ahler metric on $\mathcal{X}$, we can write 
\[
E_J^0=\{  q\in X_t| d(q, E_J) \ll 1            \} \setminus \{   q\in X_t| d(q, E_{J'}) \ll 1   , \quad \text{some } J'\supsetneq J           \}.
\]
Around $\emptyset \neq E_J\subset \mathcal{X}$, we denote $p=|J|-1$,
and introduce local coordinates $z_0, \ldots z_n$ on $\mathcal{X}$, such that $z_0, z_1, \ldots, z_p$ are the defining equations of $E_i$ for $i\in J$. The conditions on the divisors mean that away from deeper strata we may arrange $t= \prod_0^p z_i^{b_i}$, and
\[
\Omega= u_J  \prod_0^p z_i^{a_i+b_i} d\log z_i \wedge \prod_{p+1}^n dz_j
\]
for some local nowhere vanishing holomorphic function $u_J$. By definition $\Omega=dt \wedge \Omega_t$ along $X_t$, so on $E_J^0$
\[
\Omega_t = b_0^{-1} u_J z_0^{a_0}\ldots z_p^{a_p} \prod_1^p d\log z_i \wedge \prod_{p+1}^n dz_j,
\]
\[
\sqrt{-1}^{n^2} \Omega_t\wedge \overline{\Omega}_t = |b_0|^{-2} |u_J|^2 |z_0|^{2a_0 }\ldots |z_p|^{2a_p} \prod_1^p \sqrt{-1} d\log z_i \wedge d\log \bar{z}_i \wedge \prod_{p+1}^n \sqrt{-1} dz_j \wedge d\bar{z}_j.
\]
Notice also that the local equation $t= \prod_0^p z_i^{b_i}$ has $b_J= \gcd_{i\in J} b_i$ sheets of solutions. Using the polar coordinates by $z_i= e^{ x_i\log |t|  + \sqrt{-1}\theta_i }$ for $i\in J$, ones sees that the magnitude of $\int_{E_J^0} \sqrt{-1}^{n^2} \Omega_t\wedge \overline{\Omega}_t$ is  $O(|\log |t| |^l  )$ for $l= |\{ j\in J: a_j=0  \} |-1$.

The local logarithmic variables $x_i=\frac{ \log |z_i|}{ \log |t| }$ lie on the simplex
\[
\Delta_J= \{  \sum_0^p b_i x_i =1, \quad 0\leq x_i \leq 1 \}.
\]
These depend on the choice of $z_i$, but since the local defining equation of divisors differ by a nowhere vanishing holomorphic function, the ambiguity of $x_i$ is only $O( \frac{1}{ |\log |t|| } )$ for $0<|t|\ll 1$. Taking a more global viewpoint, the combinatorial pattern of how these simplices fit together exactly reflects the intersection pattern of the divisors $E_i$. Formally, this information is encoded in the \emph{dual intersection complex} $\Delta_{\mathcal{X}}$ for the snc model $\mathcal{X}$: this is the polyhedral complex whose vertices $v_i$ correspond to $E_i$, and we assign a simplex $\Delta_J$ with vertices $v_i$ for $i\in J$ if and only if $E_J\neq 0$. The coodinates $x_j$ then define a \emph{piecewise integral affine structure} on $\Delta_{\mathcal{X}}$. Up to the above $O(\frac{1}{ |\log |t||  }  )$ ambiguity, we now have a \emph{logarithm map} $\text{Log}_{\mathcal{X}}: X_t\to \Delta_{\mathcal{X}}$, locally described by $x_i=\frac{ \log |z_i|}{ \log |t| }$. Consequently, the `hybrid' space $X  \sqcup \Delta_{\mathcal{X}}$ is equipped with a natural topology, so that a sequence of points $z_k\in X_t$ converges to $x\in \Delta_{\mathcal{X}}$ iff $t\to 0$ and $\text{Log}_{\mathcal{X} }(z_k)\to x$. 
   The name `hybrid' refers to the mixture of algebraic varieties with simplicial objects. The measure calculation above explains why $\Delta_{\mathcal{X}}$ (or rather the essential skeleton inside, see below) is a better candidate notion as a limit of $X_t$ than the algebraic limit $\mathcal{X}_0$. Intuitively, the Calabi-Yau measure in the limit becomes mutually orthogonal with the measure of any fixed Fubini-Study metric, and the region carrying most of CY measure looks `small' from the algebraic perspective.

The measure also singles out a distinguished subcomplex $Sk(\mathcal{X})$, called the \emph{essential skeleton}, consisting of the simplices in $\Delta_{\mathcal{X} }$ whose vertices correspond to $E_i$ with $a_i=0$. This is where the limit of the normalised CY measure is supported. The dimension of $Sk(\mathcal{X})$ is a measurement of how transcendental the degeneration $X$ is; it is reflected by the growth order of $\int_{X_t} \Omega_t\wedge \overline{\Omega}_t$. In the case of a \emph{maximal degeneration}, $\dim_\R Sk(\mathcal{X})=n$. Let us analyze the CY measure more explicitly for maximal degenerations, in a semistable snc model. For $E_J$ corresponding to an $n$-dimensional simplex in $Sk(\mathcal{X})$, on $E_J^0$
\begin{equation}
\sqrt{-1}^{n^2} \Omega_t\wedge \overline{\Omega}_t =  |u_J|^2  \prod_1^n \sqrt{-1} d\log z_i \wedge d\log \bar{z}_i.
\end{equation}
Here $u_J$ limits to its value $u_J(E_J)$ at the point stratum $E_J$, which is called the Poincar\'e residue of $\Omega$, and is easily seen to be independent of the choice of coordinates $z_i$. It is a consequence of the residue theorem on Riemann surfaces that $|u_J(E_J)|^2$ is independent of such $J$ \cite[Thm. 7.1]{Boucksom1}. Thus the pushforward to $\Delta_{\mathcal{X}}$ of the normalised CY measure (\ref{CalabiYaumeasureeqn})  converges smoothly in the interior of $\Delta_J$ to a constant multiple of the \emph{Lebesgue measure}:
\begin{equation}\label{measureconvergence}
\text{Log}_{ \mathcal{X} *  }d\mu_t= \text{Log}_{ \mathcal{X}*  }\frac{ \Omega_t\wedge \overline{\Omega}_t }{  \int_{X_t}  \Omega_t\wedge \overline{\Omega}_t    } \xrightarrow{t\to 0}    d\mu_0:=\text{Const}\cdot dx_1\ldots dx_n.
\end{equation}
Notice $dx_1\ldots dx_n$ is canonically defined due to the presence of an integral affine structure on $\Delta_J$. Viewed as a measure on $\Delta_{\mathcal{X}}$, the limit $d\mu_0$ has null measure on the complement of the $n$-dimensional faces of $Sk(\mathcal{X})$, as the integral of $d\mu_t$ in the corresponding region is $O(\frac{1}{ |\log |t|| } )$. The constant in (\ref{measureconvergence}) is independent of $J$ and its sole purpose is to make $d\mu_0$ a probability measure.

\subsection{Berkovich space, hybrid topology}\label{Berkovichspace}

One problem with the dual intersection complex is that it involves a \emph{choice} of an snc model. The snc models are highly nonunique: we can keep blowing up to get a directed set of snc models. One would like to extract intrinsic information about the degeneration family. There are two general strategies. First, one can analyze the relation between different models and seek an optimal choice using the minimal model program \cite{NicaiseXu}\cite{NicaiseXuYu}; this usually leaves the snc world, and even the optimal choices may still be nonunique. Alternatively we can consider all (snc) models simultaneously, by the language of NA geometry. Good references can be found in \cite[A]{KS}  \cite[Appendix]{Boucksom1}\cite[chapter 2,3]{Boucksomsemipositive}.

An insight of Berkovich is that by thinking of points as multiplicative seminorms, one obtains a kind of geometry analogous to complex manifolds.
Let  $K\simeq\C(\!(t)\!)$ be equipped with its standard absolute value $|\cdot |_0=e^{-ord_t}$ where $ord_t$ is the valuation defined by the vanishing order. Its ultrametric property 
\[
|f+g|_0\leq \max\{ |f|_0, |g|_0  \}
\]
gives the name `non-archimedean' to the subject. Let $X_K$ be a smooth, geometrically connected, projective scheme over $\text{Spec}(K)$; the main examples come from base changing an algebraic degeneration family $X$ over a punctured curve. Choose a finite cover of $X_K$ by affine open sets of the form $U=\text{Spec}(A)$, where $A$ is a finitely generated $K$-algebra. The space $U^{an}$ is defined as the set of all multiplicative seminorms $|\cdot|_x: A\to \R_{\geq 0}$ extending the absolute value of $K$, endowed with the weakest topology so that the function $x\mapsto |f|_x $ is continuous  for any $f\in A$. The \emph{Berkovich space} $X_K^{an}$ is then obtained by gluing together $U^{an}$; the notation stands for `analytification'. As a topological space $X_K^{an}$ is compact and Hausdorff. In the CY case, the point-set description of $X_K^{an}$ is meant to encode information about the base of the SYZ fibration; there is also a natural structure sheaf which encodes information about the complex structure
\cite{KS}.

Let $R\simeq \C[\![t ]\!]$. The concept of models over $\text{Spec}(R)$ is entirely analogous to the case over algebraic curves. The dual intersection complexes $\Delta_{\mathcal{X}}$ for snc models over $\text{Spec}(R) $ can be compared with $X_K^{an}$ through two natural maps:

\begin{itemize}
\item There is a continuous \emph{embedding map} $emb: \Delta_{\mathcal{X} }\to X_K^{an}$. Writing $\mathcal{X}_0=\sum b_i E_i$, each divisor $E_i$ defines $val_{E_i}= \frac{ord_{E_i} }{ b_i  }$ through the vanishing order $ord_{E_i}$, so that $e^{- val_{E_i} }$ is a point in $X_K^{an}$, called a \emph{divisorial point}. More generally, given a point $x=(x_0,\ldots x_p)$ in the interior of a face $\Delta_J\subset \Delta_\mathcal{X}$ corresponding to $E_J=\cap_0^p E_i$, we can associate a \emph{monomial valuation}: expanding any local function $f$ around $E_J$ in Taylor series,
\[
f=\sum_{ \alpha\in \N^{p+1} } f_\alpha z_0^{\alpha_0}\ldots z_p^{\alpha_p}, \quad f_\alpha\in K(E_J)
\]
then the monomial valuation is
\[
val_x(f) = \min \{  \sum_0^p \alpha_i x_i | f_\alpha\neq 0       \}.
\] 
Thus $x$ gives rise to a point $e^{-val_x}\in X_K^{an}$. We shall regard $\Delta_{\mathcal{X}}$ as a subset of $X_K^{an}$.

\item
There is a continuous \emph{retraction map}  $r_{ \mathcal{X} }: X_K^{an}\to \Delta_{\mathcal{X} } $, which restricts to the identity on $\Delta_{  \mathcal{X}}\subset X^{an}$. 
 Any point $e^{-v}\in X_K^{an}$ admits
a center on $\mathcal{X}$. 
This is the unique scheme theoretic point
$\xi \in X_0$ such that 
$|f|_x \leq 1$ for $f\in \mathcal{O}_{\mathcal{X},\xi}$
and $|f|_x < 1$ for
 $f\in m_{\mathcal{X},\xi}$.
  Let $J \subset I$ 
 be the maximal subset such that
   $\xi\in E_J$. Then
$r_{ \mathcal{X} }(x) \in \Delta_{\mathcal{X} }$ 
corresponds to the monomial valuation with the same value for
 $-\log |z_j|_x, j \in J$.

\end{itemize}

\begin{rmk}
The retraction map depends on the choice of the \emph{model}. There are examples where two models $\mathcal{X}$ and $\mathcal{X}'$ define the same $\Delta_{\mathcal{X}}$ as a subset of $X_K^{an}$, but the retraction maps are different \cite[Appendix]{GublerJill}.
\end{rmk}

With these comparison maps, the Berkovich space $X_K^{an}$ is homeomorphic to the inverse limit of the dual intersection complexes of the snc models:
\[
X_K^{an}\simeq  \varprojlim_{\text{snc models} } \Delta_{ \mathcal{X} }
\]
Conceptually, \emph{an snc model gives a finite approximation of the Berkovich space}.

While dual intersection complexes depend strongly on the model, in the CY case the embedding image of the essential skeleton $Sk(\mathcal{X})\subset X_K^{an}$ as a set is independent of the model, so can be written as $Sk(X)$. This can be expected as $Sk(\mathcal{X})$ should support the limiting normalised CY measure, a property independent of the model choice. However, as we blow up snc models, the essential skeleton as a simplical complex can be subdivided.


We now indicate how NA geometry is unified with complex geometry. Consider an algebraic degeneration $X$ over a punctured curve. Let $|\cdot|$ denote the usual absolute value for complex numbers. Given a $\C$-point $z\in X_t$ for $0<|t|\ll 1$, inside some affine chart $U=\text{Spec}(A)$ of $X$, we can define a multiplicative seminorm $A\to \R_{\geq 0}$ (not non-archimedean!) 
\begin{equation}\label{hybridconvergence}
f\mapsto e^{- \log |f(z)|/\log |t|   }=|f(z)|^{1/|\log |t| |}.
\end{equation}
As a sequence of points $z$ move towards $t\to 0$, for any given meromorphic function $f=\sum a_k t^k$ on the base, $\lim_{t\to 0}\log |f(z)|/\log |t|= ord_0(f)$ which is the standard NA valuation on $K$. Thus the points on $X_K^{an}$ are natural limits of the multiplicative seminorms defined by $\C$-points on $X_t$. One can formalize this notion by introducing a \emph{hybrid topology} on $X\sqcup X_K^{an} $, so that $X_K^{an}$ takes the place of the central fibre \cite[Appendix]{Boucksom1}. The functions $f\in A$ then induce local continuous functions on $X\sqcup X_K^{an} $.

The `hybrid' space $X\sqcup \Delta_{\mathcal{X}}$ discussed in section \ref{volumeasymptoteessentialskeleton} can be understood as a finite approximation. Given an snc model $\mathcal{X}$, and take a sequence of $\C$-points $q_k$ tending to $e^{-v}\in X^{an}$, whose image under the retraction map $r_{\mathcal{X} }$ is $x=(x_0,\ldots x_p)\in \Delta_J\subset \Delta_{\mathcal{X}}$.  Tautologically $q_k$ concentrate near $E_J$, and in the local coordinates $z_0,\ldots z_p$, we have $\log |z_i(q_k)|/\log |t|\to v(z_i)= x_i $, which is equivalent to  $\text{Log}_{\mathcal{X} } (z_k)\to x=(x_0,\ldots x_p)\in \Delta_J\subset \Delta_{ \mathcal{X} }$. Formally, the topology on $X\sqcup X_K^{an}$ is the inverse limit of $X\sqcup \Delta_{\mathcal{X}}$ by taking all snc models.


\subsection{Model functions, metrics, positivity}

We now discuss functions, line bundles, and metrics on $X_K^{an}$ \cite{Boucksomsemipositive}.
Given a model $\mathcal{X}$ over $\text{Spec}(R)$ and a Cartier divisor $D$ supported on the central fibre $\mathcal{X}_0$, we can associate a continuous function on $X_K^{an}$ by setting
\[
\phi_D(x)= \max\{  \log |f|_x : f\in \mathcal{O}_{\mathcal{X}  }(D)       \},
\]
The association $D\mapsto \phi_D$ extends by $\Q$-linearity.
Functions obtained in the $\Q$-span using all such choices of models and divisors are called \emph{model functions} on $X_K^{an}$, which form a dense subset of $C^0(X_K^{an})$. The restrictions of such functions to dual intersection complexes are \emph{piecewise affine}.

 To understand the complex geometric meaning, we think of models base changed from snc models over $S$. The divisor $D$ prescribes a class of functions $\phi$ on the total space of the snc model with \emph{analytic singularities}: 
\[
\phi= \log |f|+ C^\infty \text{ function},
\]
where $f$ is a local defining function of $D$. When we consider the rescaling of the restrictions to $X_t$
\[
\phi_t= \frac{1}{ \log |t|  } \phi|_{X_t},
\]
only the singular term is relevant in the limit $t\to 0$, and $\phi_t$ converge to $-\phi_D$ in the hybrid topology.

We think about line bundles on $X_K^{an}$ via the GAGA principle: the line bundles on $X_K^{an}$ correspond to the line bundles $L$ on the scheme $X_K$. A \emph{continuous metric} on $L$ assigns to each local section $s$ a nonnegative continuous  local function $\norm{s}$ on open subsets of $X_K^{an}$, compatible with the sheaf structure, such that $\norm{fs}(x)= |f|_x\norm{s}(x)$, and $\norm{s}>0$ if $s$ is a local frame of $L$.
 Given a continuous metric, any other continuous metric on $L$ is of the form $\norm{\cdot} e^{-\phi}$ for some $\phi\in C^0(X^{an})$, analogous to the usual relation between Hermitian metrics and K\"ahler potentials. As such $\phi$ is referred to as a \emph{potential} function.

Given a model $\mathcal{X}$ for $X_K$, a model $\mathcal{L}$ of $L$ is a line bundle $\mathcal{L}\to \mathcal{X}$ with $\mathcal{L}|_X=L$. To this data we can associate a unique metric $\norm{\cdot}_{ \mathcal{L} }$ on $L$ with the following property: if $s$ is a nowhere vanishing local section of $\mathcal{L}$ on an open set $\mathcal{U}\subset \mathcal{X}$, then $\norm{s}_{ \mathcal{L} } \equiv 1$ on $\mathcal{U}\cap X_K$. This is well defined because any two such sections differ by the multiplication of an invertible function, whose NA absolute value equals the constant one. One can extend this construction to $\Q$-line bundles, and the metrics arising this way are called \emph{model metrics}. They are dense within the continuous metrics.

To see the complex geometric interpretation, we imagine a line bundle $\mathcal{L}$ on some snc models $\mathcal{X}$ over an algebraic curve. Equip $\mathcal{L}$ with any smooth Hermitian metric $h$. Given a local section $s$ of $\mathcal{L}$, the prescription compatible with (\ref{hybridconvergence}) is to consider the local functions on $X_t$ 
\[
z\mapsto  |s(z)|_h^{ 1/|\log |t||  }.
\]
Taking the limit as $t\to 0$, we precisely get the model metric. Notice the ambiguity in the choice of the Hermitian metric is obliterated in the limit.

A paramount notion in K\"ahler geometry is the \emph{positivity} of the metric, usually phrased in terms of psh properties of the potential. The above discussion suggests that in the NA setting, namely $t\to 0$, such a notion should be expressible as a numerical property of the line bundle.

\begin{Def}\cite[Thm. 2.17]{Boucksom}
(Semipositivity I) 
Let $\norm{\cdot}$ be a model metric on $L$, associated to a $\Q$-line bundle $\mathcal{L}$ on a model $\mathcal{X}$ of $X_K$. Then
\begin{itemize}
\item the metric $\norm{ }$ is a semipositive model metric iff $\mathcal{L}$ is nef, namely $\mathcal{L}\cdot C\geq 0$ for any projective curve $C$ contained in $\mathcal{X}_0$;
\item
a continuous metric $\norm{ } e^{-\phi}$ is \emph{semipositive} iff it is the uniform limit of some sequence of semipositive model metrics on $X_K^{an}$.
\end{itemize}
\end{Def}

\begin{rmk}
The advantage of `nef' instead of `ample' is that if we blow up the model further, the pullback of the model line bundle will stay nef, but ampleness will be lost.	
\end{rmk}

\begin{rmk}
Given a continuous metric $\norm{\cdot }$, one can assign to it a `closed (1,1)-form' $\theta$, which for model metrics roughly amounts to taking the numerical class of the model line bundle. This is a formal analogue for the curvature form of a Hermitian metric. For instance, $\norm{\cdot}e^{-\phi}$ is a continuous semipositive metric iff the potential $\phi$ is a continuous $\theta$-psh function. Another theory of forms and currents on Berkovich spaces is developed by Chambert-Loir and Ducros \cite{CLDucros}, which is closer in spirit to differential calculus.
\end{rmk}

In K\"ahler geometry the definition of psh function is local in the complex charts. Since the dual intersection complexes are simplicial objects, one would expect the NA analogous notion to be related to convex functions. This intuition is partially valid:

\begin{prop}\label{convexityonfaces}
\cite[Prop 5.9]{Boucksomsemipositive} Let $\mathcal{X}$ be an snc model for $X_K$, and $\mathcal{L}\to \mathcal{X}$ be a model line bundle for $L\to X$, with associated closed (1,1)-form $\theta$. Then the restriction of any continuous $\theta$-psh function to any face of $\Delta_{\mathcal{X} }\subset X_K^{an}$ is convex.
\end{prop}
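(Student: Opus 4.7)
The plan is to reduce to model functions by approximation, translate the problem into a piecewise-affine statement on the face, and then recognize convexity across each wall as the nefness of a model line bundle paired with an explicit curve in the central fiber.

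First I would use the very definition of continuous semipositivity recalled just above: if $\phi$ is continuous $\theta$-psh, then $\|\cdot\|e^{-\phi}$ is a uniform limit of semipositive model metrics, so $\phi$ is a uniform limit on $X_K^{an}$ of potentials $\phi_k$ attached to nef model line bundles $\mathcal{L}_k$ on snc models $\mathcal{X}_k$ dominating $\mathcal{X}$. Uniform limits of convex functions are convex, so it suffices to prove that each $\phi_k$ restricts to a convex function on every face $\sigma=\Delta_J$ of $\Delta_\mathcal{X}$.

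Next I would show that $\phi_k|_\sigma$ is piecewise affine. Writing $\mathcal{L}_k=\mathcal{O}_{\mathcal{X}_k}(\mathcal{D}_k)$ for a vertical $\Q$-Cartier divisor $\mathcal{D}_k=\sum c_\alpha F_\alpha$ on $\mathcal{X}_k$ (up to adding the pullback of a divisor from the base, which only shifts $\phi_k$ by an additive constant on each connected component), the recipe $\phi_k(x)=-\mathrm{val}_x(\mathcal{D}_k)$ together with the description of monomial valuations shows that on each maximal cell of the subdivision of $\sigma$ induced by $r_{\mathcal{X}_k}\circ\mathrm{emb}_\mathcal{X}:\Delta_\mathcal{X}\hookrightarrow\Delta_{\mathcal{X}_k}$, the function $\phi_k$ is an affine function of the barycentric coordinates $x_i$, with slope coefficients read off from the multiplicities $c_\alpha$. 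Convexity of a continuous piecewise-affine function on the polytope $\sigma$ is then a purely local statement at each codimension-one wall of the subdivision: one must check that the gradient jump $\Delta_\nu\phi_k$ in the normal direction $\nu$ to the wall is $\geq 0$ as one crosses in the direction of increasing $\phi_k$.

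The hard part is the third step, the tropical intersection identity: across each codimension-one wall $W\subset\sigma$, the jump $\Delta_\nu\phi_k$ equals, up to a positive combinatorial factor built from the weights $b_i$ and lattice lengths, an intersection number $\mathcal{L}_k\cdot C_W$, where $C_W\subset\mathcal{X}_{k,0}$ is the one-dimensional stratum whose dual face in $\Delta_{\mathcal{X}_k}$ is transverse to $W$. Concretely, one picks local coordinates on $\mathcal{X}_k$ adapted to the stratum $C_W=\bigcap_{\alpha\in J'}F_\alpha$ (where $|J'|=n$), transports the equation $t=\prod z_\alpha^{b_\alpha}$ and the local expression of $\mathcal{D}_k$ into these coordinates, and matches the change of local monomial frames across the wall with the intersection $F_\beta\cdot C_W$ for the divisor $F_\beta$ that leaves the cell on one side of $W$. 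The projection formula, together with the additivity of valuations, then yields the desired identity $\Delta_\nu\phi_k=\mathrm{const}\cdot(\mathcal{L}_k\cdot C_W)$ with a positive constant. Nefness of $\mathcal{L}_k$ gives $\mathcal{L}_k\cdot C_W\geq 0$, hence $\Delta_\nu\phi_k\geq 0$ at every wall, and so $\phi_k|_\sigma$ is convex.

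The principal obstacle I anticipate is the bookkeeping in step three: correctly identifying which one-dimensional stratum $C_W$ corresponds to each wall, tracking the signs so that the outward-normal slope jump has the same sign as $\mathcal{L}_k\cdot C_W$, and accounting for the multiplicities $b_i$ and lattice indices so that the combinatorial constant is manifestly positive. Once that local dictionary between kinks of piecewise-affine functions on $\Delta_\mathcal{X}$ and intersection numbers on $\mathcal{X}_{k,0}$ is set up cleanly, convexity on each face follows immediately from nefness, and passing to the uniform limit finishes the proof.
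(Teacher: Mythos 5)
The paper does not actually prove this proposition; it is quoted from \cite[Prop.~5.9]{Boucksomsemipositive}, so your attempt can only be measured against that reference and against the tools the paper itself assembles. Your first two steps are sound: by the definition of semipositivity recalled just above the statement, $\phi$ is a uniform limit of potentials $\phi_k$ of nef model metrics, uniform limits of convex functions are convex, and each $\phi_k$ restricts to an integral piecewise affine function on a face $\sigma=\Delta_J$. The gap is in your third step. The dictionary ``kink across a wall $=$ intersection number with a dual curve'' only makes literal sense when the wall is an open piece of an $(n-1)$-dimensional face of $\Delta_{\mathcal{X}_k}$, so that the dual stratum is a curve. For a face $\sigma$ of dimension $p<n$ (and the proposition is asserted for \emph{every} face), the walls of the induced subdivision have dimension $p-1$ and in general sit inside faces of $\Delta_{\mathcal{X}_k}$ of dimension smaller than $n-1$, whose dual strata are higher-dimensional; there is then no canonical curve $C_W$, and your ``transverse dual face'' does not exist. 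Even in the favourable case, the star of the wall in $\Delta_{\mathcal{X}_k}$ typically contains more than two maximal faces, and the intersection number $\mathcal{L}_k\cdot E_{J'}=\sum_i c_i\,(E_i\cdot E_{J'})$ encodes a balancing identity for the \emph{sum} of slope jumps over all branches of that star, not the single two-sided kink seen by the restriction to $\sigma$. Nefness against strata curves alone therefore does not obviously deliver the two-cell inequality you need, and as written the argument does not close.

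The standard proof --- and the one this paper's own toolkit hands you --- bypasses the wall combinatorics entirely. Since $L$ is ample, a continuous semipositive metric is a uniform limit of non-archimedean Fubini--Study metrics (the ``Semipositivity II'' statement after Chen--Moriwaki, or the regularization in \cite{Boucksomsemipositive}). In a local frame of $m\mathcal{L}$ at the generic point of $E_J$, which is the center of $\mathrm{val}_x$ for every $x\in \text{Int}(\Delta_J)$, such a potential takes the form $\frac1m\max_j\bigl(\log|f_j|_x+c_j\bigr)$ for a \emph{fixed} finite family of rational functions $f_j$. For a monomial point $x$ one has $\log|f|_x=-\mathrm{val}_x(f)=-\min\{\textstyle\sum_i\alpha_i x_i : f_\alpha\neq 0\}$, the negative of an infimum of affine functions of $x$, hence convex on $\Delta_J$; a maximum of convex functions is convex, and uniform limits preserve convexity. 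Note that the reason your ``max of local sections'' cannot be used naively for an arbitrary model function is that the maximizing family changes as the center of $x$ jumps between strata of the higher model; the role of positivity (semiampleness, reached from nefness by twisting with the ample $L$) is precisely to make the family global and fixed. If you want to salvage the intersection-theoretic route, you would essentially have to reprove this reduction in disguise.
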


The picture is that general $\theta$-psh functions define convex functions on the faces of $\Delta_{ \mathcal{X} }$, and among them the $\theta$-psh model functions give piecewise affine approximations with finer and finer grids.

\begin{rmk}
Gubler and Martin \cite[section 3]{GublerMartin} argue that the a priori global notion of semipositivity can be localized on the Berkovich space, but
their notion is quite abstract. It would be attractive to formulate a notion of convexity for functions on $\Delta_{\mathcal{X} }$ that precisely characterize the restrictions of continuous $\theta$-psh functions from $X_K^{an}$ to $\Delta_{\mathcal{X} }$.
Compare also \cite[Chapter 5]{CLDucros} for another strategy to define plurisubharmonicity on the Berkovich space in terms of positivity of currents.

\end{rmk}

\subsection{NA Monge-Amp\`ere measure}\label{NAMAmeasure}

The NA Monge-Amp\`ere measure \cite{ChambertLoir} is defined through \emph{intersection theory} in a somewhat counterintuitive manner. As a motivation, we consider the complex analytic setting of an snc model $\mathcal{X}$ over an algebraic curve, equipped with a Hermitian line bundle $(\mathcal{L}, h)$ with curvature form $\theta$ in the class $c_1(\mathcal{L})$. Then $\theta^n$ defines a family of $n$-forms on $X_t$, such that $\int_{X_t}\theta^n$ equals the intersection number $(L^n)$. The question is to describe the limit of these $n$-forms, when we view $X_t$ as converging to the dual intersection complex $\Delta_{\mathcal{X} }$ (\cf section \ref{volumeasymptoteessentialskeleton}).

We write $X_0=\sum_{i\in I} b_i E_i$.
Recall that the regions on $X_t$ corresponding to the faces in the dual intersection complex are from the algebraic perspective only small neighbourhoods of $E_J$. Thus the limit of $\theta^n|_{X_t}$ can only be supported at the vertices of $\Delta_{\mathcal{X}}$, which correspond to the components $E_i$. The amount of delta masses concentrated at the vertices are
\[
b_i\int_{E_i} \theta^n= b_i\mathcal{L}^n\cdot E_i,
\]
where $b_i$ appears due to the multiplicity of the sheets. Reassuringly,
\[
\sum_i b_i\mathcal{L}^n\cdot E_i= (L^n)
\]
gives the correct total mass.

Back to the NA setting, 
given a model $\Q$-line bundle $\mathcal{L}\to \mathcal{X}$ for  $L\to X_K$, we write $\mathcal{X}_0= \sum_i b_i E_i$, and denote the divisorial points associated to $E_i$ as $q_i$. We
can then \emph{define} the NA Monge-Amp\`ere measure for the model metric $\norm{\cdot}_ {\mathcal{L} }$ as the following signed atomic measure supported at $q_i\in X_K^{an}$:
\[
MA( \norm{\cdot}_{ \mathcal{L} } )= \sum_{E_i} b_i (\mathcal{L}^n\cdot E_i) \delta_{q_i}
\] 
This definition is compatible with pullback of line bundles by the projection formula, and ensures the total mass is the intersection number $(L^n)$. If $\norm{\cdot}_{\mathcal{L}  }$ is furthermore semipositive, then the intersection numbers are non-negative, so $MA( \norm{\cdot}_{ \mathcal{L} } )$ is a measure. Now a general continuous semipositive metric on $L$ is the uniform limit of a sequence of continuous semipositive model metrics \cite[Cor. 8.8]{Boucksomsemipositive}, and its NA MA measure can be defined as the unique limiting Radon measure of the NA MA measures for the sequence \cite[Cor. 3.5]{Boucksom}.

The theory of NA MA measures bears strong resemblance to the complex pluripotential theory for complex MA measures \cite{Boucksom}\cite{Boucksomsurvey}. The difficulty of working with them lies in the highly nonlocal appearance of the above definition, ultimately caused by the tension between the algebraic limit $\mathcal{X}_0$ and the tropical limit $\Delta_{\mathcal{X}}$. The recent result of Vilsmeier \cite{Vilsmeier} offers a more concrete perspective:

\begin{prop}\label{NAMArealMAcomparisonProp}
(NA MA-real MA comparison) Let $(\mathcal{X},\mathcal{L})$ be a semistable snc model of $(X_K,L)$, and $\text{Int}(\Delta_J)$ be an $n$-dimensional open face of $\Delta_{\mathcal{X} }$. Recall the retraction map $r_{\mathcal{X}}: X_K^{an}\to \Delta_{\mathcal{X} }$. Let $\phi\in C^0(X_K^{an})$ be the potential of a semipositive metric $\norm{\cdot}_{ \mathcal{L} }e^{-\phi}$, and suppose $\phi=\phi\circ r_{\mathcal{X}}$ on $r_{\mathcal{X}}^{-1}(\Delta_J )$,   then on $\text{Int}(\Delta_J)$ the pushforward of the NA MA measure
\[
r_{\mathcal{X}* } MA( \norm{\cdot}e^{-\phi} )= n! MA_\R (\phi|_{\text{Int}(\Delta_J)})
\]
equals the real MA measure of the convex function $\phi|_{\text{Int}(\Delta_J)}$ up to a factor $n!$.
\end{prop}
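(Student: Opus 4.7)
The strategy is to reduce to the case where $\phi$ is a piecewise affine (PL) convex function on $\Delta_J$ arising from a vertical model $\Q$-line bundle, where the NA Monge--Amp\`ere measure becomes computable by intersection theory, and then match the answer with the real Monge--Amp\`ere of a PL function by a toric computation. By continuity of the NA MA operator under uniform limits of continuous semipositive metrics, combined with the weak continuity of the real MA operator under uniform limits of continuous convex functions, it suffices to handle the case where $\phi|_{\Delta_J}$ is a PL convex function and $\phi$ is itself a $\theta$-psh model function; indeed we may approximate $\phi|_{\Delta_J}$ uniformly by PL convex functions, extend them by $r_{\mathcal{X}}$-invariance on $r_{\mathcal{X}}^{-1}(\Delta_J)$, and realize these as restrictions of semipositive model functions after passing to a suitable snc refinement $\mathcal{X}'$ of $\mathcal{X}$ (subdivisions of the face $\Delta_J$ correspond to iterated blow-ups of strata).

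First, in the PL case, choose $\mathcal{X}'\to\mathcal{X}$ such that $\phi=\phi_D$ for a vertical Cartier $\Q$-divisor $D$ on $\mathcal{X}'$ supported on the central fibre; equivalently, $\norm{\cdot}_{\mathcal{L}}e^{-\phi}=\norm{\cdot}_{\mathcal{L}'}$ for the model $\mathcal{L}'=\mathcal{L}+D$ on $\mathcal{X}'$. The hypothesis $\phi=\phi\circ r_{\mathcal{X}}$ on $r_{\mathcal{X}}^{-1}(\Delta_J)$ forces all vertices of the refinement inside $\text{Int}(\Delta_J)$ to correspond genuinely to $0$-dimensional strata of $\mathcal{X}'$ projecting into $\text{Int}(\Delta_J)$, and by the semistability of $\mathcal{X}$ all multiplicities $b_i'$ for these new components equal $1$. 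Consequently the pushforward of
\[
MA(\norm{\cdot}_{\mathcal{L}'})=\sum_{E_i'} b_i'(\mathcal{L}'^n\cdot E_i')\,\delta_{q_i}
\]
to $\text{Int}(\Delta_J)$ is a finite sum $\sum_v (\mathcal{L}'^n\cdot E_v)\,\delta_v$, ranging over the vertices $v$ of the refinement in the open face and the corresponding $0$-strata $E_v$.

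The second step is the local toric/tropical calculation identifying each intersection number with a subdifferential volume. At a $0$-stratum $E_v$ sitting over an interior vertex $v\in \text{Int}(\Delta_J)$, semistability gives \'etale-local coordinates $z_0,\dots,z_n$ with $t=z_0\cdots z_n$, so a neighbourhood of $E_v$ in $\mathcal{X}'$ is toric and $\Delta_J$ is locally the standard simplex $\{\sum x_i=1\}$ in the cocharacter lattice. The vertical $\Q$-divisor $D$ is a $\Q$-linear combination of toric divisors, encoded by the PL function $\phi$ on the fan near $v$; its support function is exactly $\phi$. The classical toric intersection formula (Bernstein--Kushnirenko) then identifies
\[
(\mathcal{L}'^n\cdot E_v)=n!\,\mathrm{Vol}_{\mathrm{Leb}}(\partial\phi(v)),
\]
where $\partial\phi(v)\subset\R^n$ is the subdifferential of the convex PL function $\phi$ at $v$, and the Lebesgue measure is normalized by the integral affine structure on $\Delta_J$. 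On the other hand, for a PL convex function the real MA measure is a sum of atoms $\sum_v \mathrm{Vol}_{\mathrm{Leb}}(\partial\phi(v))\,\delta_v$, so the two measures agree up to the factor $n!$ on $\text{Int}(\Delta_J)$.

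Finally, one passes to general continuous semipositive $\phi$ by approximation. Choose PL convex approximants $\phi_k$ on $\Delta_J$ converging uniformly to $\phi|_{\Delta_J}$; by Proposition \ref{convexityonfaces} these are restrictions of semipositive model potentials and can be extended to model functions $\tilde\phi_k$ on $X_K^{an}$ factoring through $r_{\mathcal{X}}$ over $r_{\mathcal{X}}^{-1}(\Delta_J)$ and uniformly close to $\phi$ globally. Uniform convergence of semipositive metrics gives weak convergence of their NA MA measures, while uniform convergence of convex functions gives weak convergence of real MA measures; both sides of the claimed identity are continuous with respect to these, so the PL case implies the general case. The principal difficulty, and the heart of Vilsmeier's contribution, lies in the toric intersection identity of step two: one must ensure that the local support-function/Newton-polytope dictionary at each interior $0$-stratum matches the subdifferential of the global convex function $\phi|_{\Delta_J}$ with the correct normalization and without parasitic contributions from deeper strata, which is precisely where the $r_{\mathcal{X}}$-invariance hypothesis and semistability are essential.
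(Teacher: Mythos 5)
First, a point of reference: the paper does not actually prove this proposition. It is quoted from Vilsmeier's paper, and what follows the statement in the text is explicitly a \emph{heuristic} calculation (approximating the NA metric by Hermitian metrics $h_t$ on $X_t$, computing the curvature form as $|\log|t||\,dd^c\phi\circ\text{Log}_{\mathcal{X}}$, and pushing forward its $n$-th power under $\text{Log}_{\mathcal{X}}$ as $t\to 0$). Your proposal attempts a genuinely different and more rigorous route -- reduction to model functions, a toric intersection computation, and a limiting argument -- which is indeed the right general shape for how such statements are established in the literature. The toric step is essentially sound in spirit: the relevant intersection number equals $n!$ times the volume of the subdifferential, and the real MA measure of a PL convex function is the corresponding sum of atoms. (One correction there: the atoms of $MA(\norm{\cdot}_{\mathcal{L}'})$ sit at divisorial points corresponding to the \emph{irreducible components} $E_v'$ of the refined central fibre, which are $n$-dimensional exceptional toric varieties lying over the $0$-stratum $E_J$ of $\mathcal{X}$; they are not themselves $0$-strata, and the intersection number $(\mathcal{L}'^n\cdot E_v')$ is computed on that $n$-dimensional toric divisor.)

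The genuine gap is in the reduction to the PL case. You invoke Proposition \ref{convexityonfaces} to claim that PL convex approximants of $\phi|_{\Delta_J}$ are restrictions of semipositive model potentials extendable by $r_{\mathcal{X}}$-invariance; but that proposition only states the \emph{forward} direction (restrictions of $\theta$-psh functions are convex), and the paper explicitly flags the converse -- characterizing which convex functions on faces arise as restrictions of semipositive potentials -- as an open problem. The approximants actually supplied by the general theory (uniform limits of continuous semipositive metrics by semipositive model metrics, \cite[Cor.~8.8]{Boucksomsemipositive}) need not factor through $r_{\mathcal{X}}$ over $r_{\mathcal{X}}^{-1}(\Delta_J)$, and without that factorization the PL-case identity you rely on is simply false: the pushforward $r_{\mathcal{X}*}MA$ collects mass from all of $r_{\mathcal{X}}^{-1}(\text{Int}(\Delta_J))$, which the restriction $\phi|_{\Delta_J}$ cannot detect (e.g.\ a model function from a non-toric blow-up can have affine restriction to $\Delta_J$, hence vanishing real MA there, while still depositing NA MA mass retracting into $\text{Int}(\Delta_J)$). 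So the heart of the matter is not the toric identity of your second step but the construction of semipositive model approximants that simultaneously converge uniformly to $\phi$ and respect the $r_{\mathcal{X}}$-invariance over $\Delta_J$; your proposal assumes this rather than proving it, and it is precisely the content that makes Vilsmeier's theorem nontrivial.
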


The rigorous proof of this comparison uses intersection theory. We present a heuristic calculation that hopefully makes the relation between NA MA measure and real MA measure more intuitive to differential geometers. Consider an snc model $\mathcal{X}$ over an algbebraic curve as in the motivation, and assume furthermore that it is semistable. Recall our heuristic dictionary that a metric $\norm{\cdot}$ on $L\to X_K$ should encode a family of Hermitian metrics $h_t$ on $L\to X_t$, such that $h_t^{ 1/|\log |t||  }\to \norm{\cdot}^2$ in the hybrid topology, and the NA MA measure of $\norm{\cdot}$ should be the limit of the measures associated to the curvature forms of $h|_{X_t}$. We now focus on the neighbourhood of an $n$-dimensional open face $\text{Int}(\Delta_J)\subset \Delta_{ \mathcal{X} }\subset  \Delta_{ \mathcal{X} }\sqcup X  $, where we have local coordinates $z_0,\ldots z_n$ with $\prod_0^n z_i=t$, and $x_i=\frac{\log |z_i|}{\log |t|}$. In the local picture we identify metrics with potentials, so $\norm{\cdot}\sim e^{-\phi}$, and after ignoring $C^0$-fluctuation effects $h_t^{  1/|\log |t||}\sim e^{-  2\phi\circ \text{Log}_{\mathcal{X}} }$.
Imposing more smoothness assumptions, the curvature form of $h_t$ is approximately
\[
 |\log |t|| dd^c\phi\circ\text{Log}_{ \mathcal{X}}= \frac{-1}{2\pi} \sum_{1\leq i,j\leq n} \frac{\partial^2\phi}{\partial x_i \partial x_j} dx_i \wedge d\text{arg}(z_j).
\]
The NA MA measure should agree with the limiting pushforward measure
\[
\lim_{t\to 0}\text{Log}_{ \mathcal{X}*} (|\log |t|| dd^c\phi\circ\text{Log}_{ \mathcal{X}} )^n = n!\det(D^2 \phi) |dx_1\ldots dx_n|=n! \text{MA}_\R(\phi)
\]
which equals the real MA measure up to the factor $n!$.

\begin{rmk}
In this heuristic calculation, the assumption for $\phi$ to factor through the retraction map allows us to replace the hybrid space $X\sqcup X_K^{an}$ by its finite approximation $X\sqcup \Delta_{\mathcal{X} }$.
\end{rmk}

\begin{rmk}
The above calculation is similar to the formalism developed by Chambert-Loir and Ducros, see \cite[Lemma 5.7.1]{CLDucros}.
\end{rmk}

\subsection{NA Calabi conjecture}\label{NACalabi}

The central result of NA pluripotential theory is the solution to the NA analogue of the Calabi conjecture. A good survey is \cite{Boucksomsurvey}.

\begin{thm}
\cite{Boucksom} Let $X_K$ be a smooth projective K-scheme arising from the base change of an algebraic degeneration family. Let $L$ be an ample line bundle on $X_K$, and $d\mu$ be a Radon probability measure supported on the dual intersection complex of some snc model of $X_K$. Then there is a unique continuous semipositive metric $\norm{\cdot}$ on $L$, such that
\[
MA( \norm{\cdot })= (L^n) d\mu.
\]

\end{thm}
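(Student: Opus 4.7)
My plan is to follow the variational strategy, which is the NA analogue of the Berman--Boucksom--Guedj--Zeriahi approach to the complex Calabi conjecture. Working with the affine space of continuous semipositive metrics on $L$, fix a reference model metric $\norm{\cdot}_0$ and write a general continuous metric as $\norm{\cdot}_0 e^{-\phi}$ with $\phi\in C^0(X_K^{an})$. The NA Monge--Amp\`ere energy
\[
E(\phi) = \frac{1}{(n+1)(L^n)} \sum_{j=0}^n \int_{X_K^{an}} \phi \, MA\bigl(\norm{\cdot}_0 e^{-j\phi/n}\bigr)
\]
is well defined for model metrics by intersection theory and extends to continuous semipositive potentials by density. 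Consider the functional $F(\phi) = E(\phi) - \int \phi \, d\mu$, normalized so that $\sup_{X_K^{an}}\phi = 0$. A maximizer of $F$ should satisfy the Euler--Lagrange equation $MA(\norm{\cdot}_0 e^{-\phi}) = (L^n) d\mu$.

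The first order of business is to set up existence of a maximizer. I would work on the larger space of bounded $\theta$-psh potentials (where $\theta$ corresponds to $c_1(L)$), showing that $E$ is concave and upper semicontinuous for an appropriate weak topology (e.g.\ $L^1$-type convergence against MA measures of reference), while the linear term $\int \phi\,d\mu$ is clearly continuous. Compactness of sublevel sets $\{F\geq -C,\ \sup\phi = 0\}$ requires an NA analogue of the Skoda/$\alpha$-invariant uniform estimate: ampleness of $L$ forces any normalized $\theta$-psh potential to be uniformly bounded above on divisorial points up to a logarithmic term, and since $d\mu$ has compact support on a dual complex, one can bound $\int\phi\,d\mu$ from below in terms of $E(\phi)$. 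This should yield a maximizer $\phi_*$ in the class of bounded $\theta$-psh potentials.

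The next step is to differentiate $F$ along one-parameter families $\phi_* + s\psi$ for model perturbations $\psi$; the differentiability of $E$ (a Bedford--Taylor-type computation in the NA world, already developed in the semipositive metric calculus) yields
\[
\int \psi \, MA(\norm{\cdot}_0 e^{-\phi_*}) = (L^n)\int \psi\, d\mu
\]
for all model functions $\psi$, and density of model functions in $C^0(X_K^{an})$ promotes this to the desired NA MA equation. Uniqueness of the solution (up to additive constant, which is pinned down by the probability normalization of $d\mu$ and total mass $(L^n)$ on both sides) follows from the NA domination/comparison principle: if $\phi_1,\phi_2$ both solve the equation, applying the comparison principle on $\{\phi_1<\phi_2\}$ and using the orthogonality relation for the non-pluripolar product forces $\phi_1 - \phi_2$ to be constant.

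The step I expect to be genuinely hard is promoting the bounded variational solution to a \emph{continuous} one, which is the NA counterpart of Ko\l{}odziej's $L^\infty$ estimate. Here one needs a regularization scheme: approximate $d\mu$ by a sequence of nice model measures (atomic measures supported on divisorial points from finer and finer snc models), solve the corresponding NA MA equations at the model level using intersection-theoretic methods and the envelope construction $P(\phi):=\sup\{\psi\in PSH(\theta)\cap C^0:\psi\leq\phi\}$, and derive a uniform modulus of continuity from a quantitative NA capacity estimate together with the Skoda-type bound. Passing to the limit, uniform convergence is preserved by the MA operator (this is where the NA Chambert-Loir measure's continuity along decreasing/uniformly convergent sequences of semipositive metrics is essential), producing the continuous solution. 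The technical heart of the whole argument is this capacity/$L^\infty$ estimate, which is where the orthogonality of $d\mu$ to pluripolar sets is crucial.
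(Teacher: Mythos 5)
You are following exactly the route the paper sketches for the Boucksom--Favre--Jonsson theorem: the variational method, maximizing $F_\mu(\phi)=\mathcal{E}(\phi)-(L^n)\int\phi\,d\mu$ over a class of $\theta$-psh potentials that is compact modulo constants, identifying the maximizer as a critical point, and upgrading to continuity via Kolodziej-type capacity estimates. The architecture is right, and the compactness, uniqueness-by-concavity, and continuity steps are described in essentially the same way as in the paper.

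The genuine gap is in your Euler--Lagrange step. You propose to differentiate $F$ along $\phi_*+s\psi$ for model perturbations $\psi$ and attribute the needed differentiability of $\mathcal{E}$ to a ``Bedford--Taylor-type computation.'' But $\phi_*+s\psi$ is in general no longer $\theta$-psh, so it leaves the domain on which $\mathcal{E}$ and the Monge--Amp\`ere operator are defined; this is precisely the subtlety the paper flags (``small perturbations of functions in $PSH(X_K^{an},\theta)$ may fall outside of the class by losing positivity''). The standard repair --- and the technical heart of both the complex and the NA variational arguments --- is to extend the energy to arbitrary continuous functions by precomposing with the psh envelope $P$ and to prove that $s\mapsto\mathcal{E}(P(\phi_*+s\psi))$ is differentiable at $s=0$ with derivative $\frac{1}{(L^n)}\int\psi\,MA(\norm{\cdot}_0e^{-\phi_*})$; this rests on the orthogonality property that $MA(P(f))$ is carried by the contact set $\{P(f)=f\}$, and in the NA setting its proof is not soft calculus but goes through multiplier/test ideals and vanishing theorems. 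You do invoke envelopes, but only in the continuity step where they are auxiliary; without the envelope-differentiability statement your maximizer is not known to be a critical point, and the derivation of the equation collapses. Two smaller corrections: your displayed formula for $E$ is not the Monge--Amp\`ere energy (the correct functional uses the mixed products $\int\phi\,(\theta+dd^c\phi)^j\wedge\theta^{n-j}$, not $MA$ of rescaled potentials, and your version does not have $MA$ as its first variation once $n\ge 2$); and uniqueness holds only up to scaling the metric --- the probability normalization of $d\mu$ does not remove the additive constant in the potential, which must be fixed by a normalization such as $\sup\phi=0$.
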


Their strategy uses a \emph{variational method}. There is a concave \emph{energy functional} $\mathcal{E}$ on the space of continuous semipositive metrics on $L$ (equivalently viewed as continuous $\theta$-psh potentials $\phi$), whose first variation is given by the NA MA measure. One seeks a maximizer of the functional
\[
F_\mu(\phi)= \mathcal{E}(\phi)-(L^n)\int_{X_K^{an}} \phi d\mu,
\]
by first enlarging the space of $\phi$ to a function space $PSH(X_K^{an}, \theta)$ which is \emph{compact} modulo the addition of a real constant; this is analogous to the $L^1$-compactness of $PSH(X,\omega)/\R$ in the K\"ahler setting. The notions of the NA MA measure and the energy functional extend naturally to the \emph{energy class} functions inside $PSH(X_K^{an}, \theta)$, much like in the complex pluripotential theory setting. One then shows the maximizer is in fact a \emph{critical point}, namely a weak solution to the NA MA equation. This is subtle since small perturbations of functions in $PSH(X_K^{an}, \theta)$ may fall outside of the class by losing positivity. One proves the continuity of the weak solution using analogues of Kolodziej's estimates. The uniqueness of the solution again relies on the concavity of $\mathcal{E}$.

While this strategy shares a very similar logical structure with the complex analytic setting, the technical foundations are built upon intersection theory and vanishing theorems in birational geometry, instead of differential operators.

The main case of interest to us is when $X_K$ arises from polarized algebraic maximal degeneration of CY manifolds $L\to X$. Then NA pluripotential theory provides a solution to
\begin{equation}\label{NAMACY}
MA( \norm{\cdot }_{CY})= (L^n) d\mu_0,
\end{equation}
where $d\mu_0$ is the Lebesgue measure supported on the essential skeleton $Sk(X)\subset X_K^{an}$ (\cf section \ref{volumeasymptoteessentialskeleton}).

\begin{Def}(\cf section \ref{NAMAmeasure})
We say $\norm{\cdot }_{CY}$ satisfies the \emph{NA MA-real MA comparison property}, if there exists a semistable snc model $(\mathcal{X}, \mathcal{L})$ of $(X,L)$ with the property that, the potential $\phi_0$ defined by $\norm{\cdot}_{ CY }= \norm{\cdot}_{ \mathcal{L}}e^{-\phi_0}$ satisfies $\phi_0=\phi_0\circ r_{\mathcal{X}  }$ on the preimages of the retraction map over all the $n$-dimensional open faces 
$\text{Int}(\Delta_J)\subset Sk(X)$.

\end{Def}

Notice $\text{Int}(\Delta_J)\subset \Delta_{\mathcal{X} }$ inherits a natural integral affine structures.  
Since the restriction of $\phi_0$ is convex on these faces by Prop. \ref{convexityonfaces}, its real MA measure makes sense, and by Prop. \ref{NAMArealMAcomparisonProp} it satisfies the \emph{real MA equation} on $\text{Int}(\Delta_J)$
\begin{equation}\label{realMACY}
\text{MA}_\R(\phi_0)= \frac{ (L^n) }{ n!} d\mu_0.
\end{equation}
Then the regularity theory of real MA equation (\cf section \ref{RegularitytheoryforrealMA}) will apply, so we may view the comparison property as a regularity assumption on $\norm{\cdot}_{CY}$. Some subtleties are discussed in \cite[Appendix]{GublerJill}.


\subsection{Approximation by Fubini-Study metrics}

A fundamental result in K\"ahler geometry is that any K\"ahler metric in an integral class can be approximated by Fubini-Study metrics associated with projective embeddings. While the usual Fubini-Study metric depends on a choice of a Hermitian inner product on the $\C$-vector space of global sections, the NA analgoue depends on a NA norm on the $K$-vector space $V=H^0(X_K,m L)$ for $m\gg 1$, with the ultrametric property $\norm{x+y}_V\leq \max\{ \norm{x}_V, \norm{y}_V \}$. In our case $K=\C(\!(t)\!)$ one can select a $K$-basis $s_0, s_1, \ldots s_N$ for $V$ (called an `orthogonal basis' \cite[section 2.1.2]{Fang}), such that
\[
\norm{ a_0s_0+ \ldots+ a_N s_N}_V=\max\{ |a_0|\norm{s_0}_V,\ldots ,|a_N|  \norm{s_N}_V  \}, \quad \forall a_i\in K.
\]
The NA Fubini-Study metric on $L\to X_K$ can be defined as
\[
\norm{s}_{FS}(x)=  \inf_{ \tilde{s}\in V, \tilde{s}(x)=s^{\otimes m}(x)} \norm{ \tilde{s}}_V^{1/m}, \quad \forall x\in X_K^{an}.
\]
Concretely in the orthogonal basis, written in a local trivialisation,
\begin{equation}\label{FubiniStudyNA}
\norm{s}_{FS }(x)= \frac{ |s(x)|}{ \max_j \{  |s_j(x)|/\norm{s_j}_V  \}^{1/m}   }, \quad \forall x\in X^{an}.
\end{equation}
A NA analogue of the \emph{Fubini-Study approximation theorem} gives an alternative view on semipositive metrics:

\begin{prop}
(Semipositivity II)
\cite{ChenMoriwaki} Assume $L\to X_K$ is ample. Then a continuous metric on $L$ is semipositive iff it can be written as a uniform limit of Fubini-Study metrics.
\end{prop}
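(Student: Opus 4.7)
\medskip

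\noindent\textbf{Proof plan.} The two directions are rather asymmetric, so I would handle them separately.

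For the easier direction — that uniform limits of Fubini-Study metrics are semipositive — I would exhibit each Fubini-Study metric as a model metric coming from a nef model line bundle, and then invoke the density of semipositive model metrics inside continuous semipositive metrics. Concretely, given an orthogonal basis $s_0,\ldots,s_N$ of $V=H^0(X_K,mL)$, the unit-ball lattice $V^{\circ}=\{v\in V:\norm{v}_V\leq 1\}$ is a free $R$-module, so selecting generators defines a morphism $X_K\to \mathbb{P}^N_K$ extending (via scheme-theoretic closure) to a morphism of a model $\mathcal{X}\to\mathbb{P}^N_R$. Pulling back $\mathcal{O}(1)$ yields a model $\mathcal{L}_{FS}$ of $mL$, and a direct inspection of \eqref{FubiniStudyNA} identifies $\norm{\cdot}_{FS}^m$ with the model metric $\norm{\cdot}_{\mathcal{L}_{FS}}$. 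Since $\mathcal{O}(1)$ on $\mathbb{P}^N_R$ is ample, its pullback $\mathcal{L}_{FS}$ is nef, so $\norm{\cdot}_{FS}$ is a semipositive model metric; a uniform limit of such is semipositive by definition.

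For the harder direction — that every continuous semipositive metric is a uniform limit of Fubini-Study metrics — I would first use the density of semipositive model metrics to reduce to approximating a single semipositive model metric $\norm{\cdot}_{\mathcal{L}}$ attached to a nef $\mathcal{L}\to\mathcal{X}$ with $L=\mathcal{L}|_{X_K}$ ample. For each large $m$ define an ultrametric norm on $V_m=H^0(X_K,mL)$ by the sup-norm
\[
\norm{s}_{V_m}=\sup_{x\in X_K^{an}}\norm{s(x)}_{\mathcal{L}}^{m},
\]
and let $\norm{\cdot}_{FS,m}$ be the associated Fubini-Study metric. From \eqref{FubiniStudyNA} the tautological bound $\norm{s(x)}_{\mathcal{L}}^m\leq \norm{s}_{V_m}$ gives immediately $\norm{\cdot}_{FS,m}^m\leq \norm{\cdot}_{\mathcal{L}}^m$, i.e.\ writing $\norm{\cdot}_{FS,m}=\norm{\cdot}_{\mathcal{L}}\,e^{-\phi_m/m}$ one has $\phi_m\geq 0$ everywhere.

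The main obstacle is the reverse inequality: one must show $\phi_m\leq \epsilon_m$ with $\epsilon_m\to 0$ uniformly on $X_K^{an}$. This amounts to a non-archimedean asymptotic base-point-freeness/equidistribution statement: at every $x\in X_K^{an}$ one can find $s\in V_m$ with $\norm{s(x)}_{\mathcal{L}}^m$ close to $\norm{s}_{V_m}$. The ingredients would be (i) ampleness of $L$ together with nefness of $\mathcal{L}$, so that by asymptotic Riemann--Roch and Fujita-type vanishing the graded algebra $\bigoplus_m H^0(\mathcal{X},m\mathcal{L})$ is ``big enough'', and (ii) an upper-envelope argument showing that the Fubini-Study envelope of $\norm{\cdot}_{\mathcal{L}}$ agrees with $\norm{\cdot}_{\mathcal{L}}$ in the limit; in the Chen--Moriwaki setting this is implemented through estimates on saturations of section rings together with a Zhang-type inequality comparing sup-norms to intersection-theoretic invariants. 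Once this uniform convergence $\norm{\cdot}_{FS,m}^{1/m}\to\norm{\cdot}_{\mathcal{L}}$ is established, a diagonal argument combining the approximation of the given semipositive metric by semipositive model metrics with the Fubini-Study approximation of each model metric finishes the proof. I expect essentially all the technical weight to sit in step (ii): controlling the error $\phi_m$ uniformly across the (highly non-local) Berkovich space, where one cannot rely on the local Bergman kernel asymptotics available in the complex analytic setting.
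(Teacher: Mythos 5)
The paper does not prove this proposition: it is quoted directly from Chen--Moriwaki, so there is no internal argument to compare against, and your proposal has to be judged on its own terms. The forward direction is essentially right and complete: realizing the Fubini--Study metric of an orthogonal basis as the model metric of the pullback of $\mathcal{O}(1)$ along a morphism $\mathcal{X}\to\mathbb{P}^N_R$, observing that this pullback is nef (indeed semiample) in restriction to the central fibre, and closing with the definition of semipositivity as a uniform limit of semipositive model metrics is the standard argument. The one point you elide is that absorbing the weights $\norm{s_j}_V$ into a rescaling $s_j\mapsto a_j s_j$ with $a_j\in K^*$ requires $\log\norm{s_j}_V$ to lie in the value group; for general norms one passes to $\Q$-line bundles or a further approximation, which is harmless.

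For the converse, two remarks. First, a sign slip: with the sup-norm $\norm{s}_{V_m}=\sup_x\norm{s(x)}_{\mathcal{L}}^m$, the tautological inequality goes the other way. Every competitor $\tilde s$ in the infimum defining $\norm{\cdot}_{FS,m}$ satisfies $\norm{\tilde s}_{V_m}\geq\norm{\tilde s(x)}=\norm{s(x)}_{\mathcal{L}}^m$, so the Fubini--Study (quotient) metric \emph{dominates} the model metric; the free inequality is $\phi_m\leq 0$ in your convention, and the hard one is the lower bound. This does not change the architecture --- you correctly identify that the substance is producing, for each $x\in X_K^{an}$, a section of $mL$ whose sup-norm is nearly attained at $x$, which is precisely the ``extension property'' of the title of the cited paper. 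Second, that step \emph{is} the entire content of the proposition, and your sketch of it (``Fujita-type vanishing'', ``saturations of section rings'', ``Zhang-type inequality'') is a gesture rather than an argument. The clean mechanism in the discretely valued case is: perturb the nef model $\mathcal{L}$ to the ample model $\mathcal{L}+\frac{1}{k}\mathcal{A}$; for ample models, relative global generation of high multiples makes the lattice quotient metric literally equal to the model metric; then let $k\to\infty$ and absorb the $O(1/k)$ error in the potential. As written, your proof settles one direction and reduces the other to the statement being cited, so it cannot stand alone without importing that input.
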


\begin{rmk}
Given an NA norm $\norm{\cdot}_V$ on the $K$-vector space $V=\C^{N+1}\otimes_\C K$, finding an orthogonal basis in general requires access to formal Laurent series. If we only use sections which are finite Laurent polynomials in $t$, then for any given $\epsilon>0$, we can find a $K$-basis $s_0, \ldots s_N$ such that
$
\norm{ a_0s_0+ \ldots+ a_N s_N}_V'=\max\{ |a_0|\norm{s_0}_V,\ldots ,|a_N| \norm{s_N}_V   \}$ satisfies
\[
(1-\epsilon) \norm{\cdot}_V'\leq \norm{\cdot}_V \leq \norm{\cdot}_V',
\]
by \cite[Prop. 1.3]{ChenMoriwaki}. The upshot is that in the approximation theorem we may assume $s_i$ to be finite Laurent polynomials.
\end{rmk}

For the complex geometric interpretation, we assume as usual $X_K$ is the base change of an algebraic degeneration family $X$, with an ample polarisation line bundle $L$. For any given NA Fubini-Study metric (\ref{FubiniStudyNA}), we can associate a family of Fubini-Study metrics on $(X_t, L)$: 
\begin{equation}\label{FubiniStudyXt}
\norm{s}_{FS,t}(z)=  \frac{ |s(z)|}{ \{ \{ \sum_j  |s_j(z,t) |^2 |t|^{2\log \norm{s_j}_V}  \}^{1/2m}   }, \quad \forall z\in X_t.
\end{equation}
Here $s_j$ make sense for finite $t$ because they are selected as finite Laurent polynomials in $t$. By our dictionary, we should consider the limit of $\norm{s}_{FS,t}^{1/|\log |t||  }$ as $t\to 0$. Since
\[
0\leq \log (\sum_j |s_j|^2 |t|^{2\log {\norm{s_j}_V} }   )^{1/2} -\log \max_j |s_j| |t|^{\log \norm{s_j}_V} \leq \log (N+1),
\]
in the limit the difference between maximum and square length disappears, so $\norm{s}_{FS,t}^{1/|\log |t||  }$ converges to (\ref{FubiniStudyNA}) in the hybrid topology on $X\sqcup X_K^{an}$.


\section{Potential estimates and SYZ fibration}\label{PotentialestimateSYZfibration}

We consider a polarized algebraic maximal degeneration of Calabi-Yau manifolds $X\to S\setminus \{0\}$ over a smooth algebraic curve. Let $(\mathcal{X}, \mathcal{L})$ be a semistable snc model with $\mathcal{L}|_X=L$. The NA pluripotential theory provides a continuous semipositive metric $\norm{\cdot}_{CY}=\norm{\cdot}_{\mathcal{L} }e^{-\phi_0}$ on $L$ over $X_K^{an}$ solving the NA MA equation (\ref{NAMACY}), which we assume henceforth satisfies the NA MA-real MA comparison property, so $\phi_0$ solves the real MA equation over the $n$-dimensional open faces $\text{Int}(\Delta_J)$ of the essential skeleton $Sk(\mathcal{X} )$ (\cf section \ref{NACalabi}).

\subsection{Comparison K\"ahler metric I}

We apply the Fubini-Study approximation to transfer the NA metric $\norm{\cdot}_{CY}$ to the complex manifolds $X_t$ up to $C^0$-small errors in the potential, while \emph{preserving the positivity} of the metric.

Recall there is a logarithm map $\text{Log}_{ \mathcal{X} }: X_t\to \Delta_{\mathcal{X} }$, defined up to $O( \frac{1}{ |\log |t|| } )$ coordinate ambiguity, so when we refer to $\text{Log}_{ \mathcal{X} }$ over $\text{Int}(\Delta_J)$ we will implicitly shrink $\text{Int}(\Delta_J)$ by $O(\frac{1}{ |\log |t|| }   )$ to ensure the coordinate expression $x_i= \frac{ \log |z_i|}{ \log |t| }$ makes sense.

\begin{lem}\label{comparisonmetricIlemma}
Given any $0<\epsilon\ll 1$, then for sufficiently small $t$ depending on $\epsilon$, there is a smooth K\"ahler metric $\omega_{FS,t}$ on $(X_t, \frac{1}{|\log |t|| }c_1(L))$, such that 
\begin{itemize}
\item The relative K\"ahler potential for any two choices of $\epsilon$ is bounded uniformly independent of small $t$.
\item On $\text{Log}_{ \mathcal{X} }^{-1}(\text{Int}(\Delta_J))$, the local K\"ahler potentials 
$\phi_{J,t}$ of $\omega_{FS,t}$ can be chosen to  satisfy $|\phi_{J,t}-  \phi_0\circ \text{Log}_{ \mathcal{X}}|<\epsilon$.

\end{itemize}

\end{lem}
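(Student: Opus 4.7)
The plan is to apply the NA Fubini--Study approximation theorem (Semipositivity II) to produce an NA Fubini--Study metric $\|\cdot\|_{FS}$ on $L\to X_K^{an}$ of the form $\|\cdot\|_{CY}=\|\cdot\|_{FS}e^{-\psi_\epsilon}$ with $\sup_{X_K^{an}}|\psi_\epsilon|<\epsilon/2$, and then use the complex geometric counterpart (\ref{FubiniStudyXt}) to pull it back to a family of smooth Fubini--Study K\"ahler metrics $\omega_{FS,t}$ on $X_t$. By Semipositivity II combined with the remark following it, we may take $\|\cdot\|_{FS}$ of the form (\ref{FubiniStudyNA}) for some $m$ and an orthogonal basis $s_0,\ldots,s_N$ of $H^0(X_K,mL)$ consisting of finite Laurent polynomials in $t$; each $s_j$ then restricts to a holomorphic section of $mL\to X_t$ for $|t|$ small, and $\|s_j\|_V$ is a rational power of $|t|_0$. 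Plugging these $s_j$ into (\ref{FubiniStudyXt}) yields a smooth Hermitian metric $\|\cdot\|_{FS,t}$ on $L|_{X_t}$, and setting $\omega_{FS,t}:=-\frac{1}{|\log|t||}dd^c\log\|\cdot\|_{FS,t}$ gives a smooth form (strictly positive once $m$ is large enough that the $s_j$ separate points on $X_t$) in the class $\frac{1}{|\log|t||}c_1(L)$.

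For the second bullet, work in local coordinates $(z_0,\ldots,z_p,\ldots)$ around $E_J$ as in section \ref{volumeasymptoteessentialskeleton}, and in a local frame $e$ of $\mathcal{L}$. Expanding each $s_j/e^{\otimes m}$ as a Taylor series in the $z_i$ and substituting $\log|z_i|=x_i\log|t|$ into (\ref{FubiniStudyXt}) turns the squared-sum into a sum of monomials in $|t|$ with varying exponents. The inequality
\[
0 \leq \log\bigl(\sum_j|a_j|^2\bigr)^{1/2} - \log\max_j|a_j| \leq \log(N+1)
\]
already recorded just after (\ref{FubiniStudyXt}) becomes $O(1/|\log|t||)$ after dividing by $|\log|t||$, so the local K\"ahler potential $\phi_{J,t}$ of $\omega_{FS,t}$ matches the piecewise-affine NA potential $\phi_{FS}$ of $\|\cdot\|_{FS}$, evaluated at the monomial valuation $x=(x_0,\ldots,x_p)$, up to $o(1)$ as $t\to 0$. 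Combining with $|\phi_{FS}-\phi_0|<\epsilon/2$ on $X_K^{an}$ yields $|\phi_{J,t}-\phi_0\circ\text{Log}_\mathcal{X}|<\epsilon$ uniformly on the requisite shrinking of $\text{Log}_\mathcal{X}^{-1}(\text{Int}(\Delta_J))$, provided $|t|$ is small enough depending on $\epsilon$.

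For the first bullet, given two choices $\omega_{FS,t},\omega'_{FS,t}$ coming from two such NA approximations, the relative K\"ahler potential on $X_t$ is the globally defined smooth function $u_t=-\frac{1}{|\log|t||}\log(\|\cdot\|_{FS,t}/\|\cdot\|'_{FS,t})$. The same square-sum/max estimate applied globally shows that $u_t$ converges uniformly on $X\sqcup X_K^{an}$ in the hybrid topology to the bounded NA function $\psi_\epsilon-\psi'_{\epsilon'}$, giving the claimed uniform $C^0$-bound for small $t$. The main obstacle is the uniform-in-$t$ convergence in the previous paragraph: the coordinates $x_i=\log|z_i|/\log|t|$ are defined only up to $O(1/|\log|t||)$, the local divisor-defining functions differ from the $z_i$ by nowhere-vanishing holomorphic units, and the open faces $\text{Int}(\Delta_J)$ must be shrunk by $O(1/|\log|t||)$ to keep $x_i$ away from deeper strata. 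Each such error is individually $O(1/|\log|t||)$ and dissolves after rescaling, but tracking all three sources simultaneously alongside the square-sum-versus-maximum gap is where the analytic care is concentrated.
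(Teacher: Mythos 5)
Your overall route is the paper's: approximate $\norm{\cdot}_{CY}$ by an NA Fubini--Study metric with potential error $\epsilon/2$, realize it on $X_t$ via (\ref{FubiniStudyXt}) with sections chosen as finite Laurent polynomials, and pass to the hybrid limit. But there is a genuine gap in your argument for the second bullet. You claim that $\phi_{J,t}$ matches $\phi_{FS}$ \emph{evaluated at the monomial valuation} $x=\text{Log}_{\mathcal{X}}(z)$ up to a uniform $o(1)$, with the only error source being the sum-versus-max discrepancy. This is false in general: writing $s_j/e^{\otimes m}=\sum_\alpha f_\alpha z^\alpha$, one has the uniform one-sided bound $\log|s_j(z)|/\log|t|\geq val_x(s_j)-o(1)$, but the reverse inequality fails near the zero locus of the leading coefficient $f_\alpha$ realizing $val_x(s_j)$ (at a zero of $s_j$ on $X_t$ the left-hand side is $+\infty$). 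Equivalently, points $z\in X_t$ with the same image under $\text{Log}_{\mathcal{X}}$ converge in the hybrid topology on $X\sqcup X_K^{an}$ to \emph{various} points of the fibre $r_{\mathcal{X}}^{-1}(x)$, not just to $x$ itself, and a Fubini--Study potential does not in general satisfy $\phi_{FS}=\phi_{FS}\circ r_{\mathcal{X}}$. A warning sign is that your intermediate claim, if true, would establish the second bullet for an approximation of an \emph{arbitrary} continuous semipositive metric, whereas the lemma sits under the standing NA MA--real MA comparison hypothesis and the paper's proof invokes it explicitly.

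The repair is to route the two-sided estimate through $\phi_0$ rather than through $\phi_{FS}|_{\Delta_{\mathcal{X}}}$: since $|\phi_{FS}-\phi_0|<\epsilon/2$ on all of $X_K^{an}$ and, by the comparison hypothesis, $\phi_0=\phi_0\circ r_{\mathcal{X}}$ on $r_{\mathcal{X}}^{-1}(\text{Int}(\Delta_J))$, the oscillation of $\phi_{FS}$ along each fibre of $r_{\mathcal{X}}$ over $\text{Int}(\Delta_J)$ is at most $\epsilon$; combined with the hybrid convergence of $\phi_{J,t}$ to $\phi_{FS}$ on $X\sqcup X_K^{an}$, this yields $|\phi_{J,t}-\phi_0\circ\text{Log}_{\mathcal{X}}|<\epsilon$ (after adjusting constants) for small $t$. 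Your handling of the remaining error sources (sum versus max, the $O(1/|\log|t||)$ coordinate ambiguity, shrinking the open faces) is fine, and your argument for the first bullet is in the same spirit as the paper's.
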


\begin{proof}
Let $\norm{\cdot}_{FS}$ be a NA Fubini-Study approximation of $\norm{\cdot}_{CY}$, with potential difference less than $\epsilon/2$. 
We construct the Fubini-Study metrics $\norm{\cdot }_{FS,t}$ on $(X_t, L)$ by the formula 
(\ref{FubiniStudyXt}), so the curvature forms of $\norm{\cdot }_{FS,t}^{1/|\log |t||}$ define the K\"ahler metrics  $\omega_{FS,t}$ on $(X_t, \frac{1}{|\log |t|| }c_1(L))$. By construction, for sufficiently small $t$ the local potentials are $C^0$-close to that of $\norm{\cdot}_{FS}$, which is $\epsilon$-close to the continuous metric $\norm{\cdot}_{CY}$, so the uniform boundedness of the potentials can be guaranteed.

By our NA MA-real MA comparison assumption, over $\text{Int}(\Delta_J)$ the potential $\phi_0$ of $\norm{\cdot}_{CY}$ equals the pullback $\phi_0\circ r_{\mathcal{X} }$  via the retraction map $r_{\mathcal{X} }$. From our discussions on the hybrid topology in section \ref{Berkovichspace}, over $\text{Int}(\Delta_J)$, for $\phi_{J,t}$ to be $C^0$-close to $\phi_0\circ r_{\mathcal{X} }$ means the same as saying $\phi_{J,t}$ is $C^0$-close to $\phi_0\circ \text{Log}_{\mathcal{X} }$.

\end{proof}

\subsection{Comparison K\"ahler metric II: Regularisation}

We now improve the metric $\omega_{FS,t}$ on $(X_t, \frac{1}{|\log |t|| }c_1(L))$ to make the volume form approximately CY except on a set with a small percentage of the CY measure.

Let $0<\delta\ll 1$. Since $\phi_0$ solves the real MA equation (\ref{realMACY}) on the $n$-dimensional open faces $\text{Int}(\Delta_J)$, the regularity theory of real MA surveyed in section \ref{RegularitytheoryforrealMA} applies. In particular, we can find finitely many small open balls $B_l=B(p_l, 2r(\delta))$ properly contained in $ \cup_J\text{Int}(\Delta_J)$, such that $\phi_0$ has $C^k$-norm uniformly bounded by $C(\delta)$ on any $B_l$, and the complement of $W_\delta=\cup_l B(p_l, r(\delta))$ has small $\mu_0$-measure less than $\delta$. The open sets $W_\delta$ form an exhaustion of $W_0=\cup_{\delta>0} W_\delta$, which is the $C^\infty$-locus of the real MA solution $\phi_0$. Recall the normalised CY measure $d\mu_t$ from section \ref{volumeasymptoteessentialskeleton}.

\begin{lem}(Regularisation)\label{regularisationlemma}
Let $\epsilon$ be suffiently small dependent on $\delta$, and construct $\omega_{FS,t}$ as in Lemma \ref{comparisonmetricIlemma}, for $t$ sufficiently small dependent on $\epsilon$ and $\delta$.		
There is a Lipschitz continuous function $\psi_t$ on $X_t$ with $\norm{\psi_t}_{L^\infty}\leq 3\epsilon$, such that
\begin{itemize}
\item The function $\psi_t$ is smooth away from a closed subset with $d\mu_t$-measure zero.
\item 
The (1,1)-current $\omega_{\psi,t}=\omega_{FS,t}+dd^c\psi_t\geq 0$ is positive on $X_t$.

\item The metric estimate $(1-C\epsilon)dd^c \phi_0\circ \text{Log}_{\mathcal{X} }\leq \omega_{\psi,t} \leq (1+C\epsilon) dd^c \phi_0\circ \text{Log}_{\mathcal{X} }$ holds on the smooth locus of $\psi$ inside $\text{Log}_{\mathcal{X} }^{-1}(W_\delta)$.

\item The total variation $ \int_{X_t} | \frac{ |\log |t||^n \omega_{\psi,t}^n }{ (L^n) }  -d\mu_t|  <\delta.$
\end{itemize}

\end{lem}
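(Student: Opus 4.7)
The plan is to modify $\omega_{FS,t}$ on the preimage of $W_\delta$ so that the new K\"ahler form coincides there with $dd^c\tilde\phi_0$, where $\tilde\phi_0:=\phi_0\circ\text{Log}_{\mathcal{X}}$; this is the natural model metric because $dd^c\tilde\phi_0$ realises exactly the correct limiting CY density via the NA MA--real MA comparison of Prop.~\ref{NAMArealMAcomparisonProp}. I glue back to $\omega_{FS,t}$ on a collar outside $W_\delta$ through a regularised maximum of local potentials, using a cutoff on the tropical skeleton arranged so that the regularisation region is confined to the collar and the modified metric equals $dd^c\tilde\phi_0$ uniformly on $\text{Log}_{\mathcal{X}}^{-1}(W_\delta)$.

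Concretely, using the balls $B_l=B(p_l,2r(\delta))$ and $W_\delta=\cup_l B(p_l,r(\delta))$ from section~\ref{RegularitytheoryforrealMA}, pick a smooth cutoff $\chi\colon\cup_l B_l\to[0,1]$ with $\chi\equiv0$ on $W_\delta$ and $\chi\equiv1$ outside $\cup_l B(p_l,\tfrac{3}{2}r(\delta))$, and define
\[
\widetilde{\phi}_t:=\max\nolimits_{\eta}\bigl(\phi_{J,t},\ \tilde\phi_0+2\epsilon-4\epsilon\,\chi\circ\text{Log}_{\mathcal{X}}\bigr)
\]
on $\text{Log}_{\mathcal{X}}^{-1}(\cup_l B_l)$, and $\widetilde\phi_t:=\phi_{J,t}$ elsewhere (regularised maximum with parameter $\eta<\epsilon/2$); then $\psi_t:=\widetilde\phi_t-\phi_{J,t}$ is a globally defined smooth function on $X_t$ because the local pluriharmonic ambiguities of $\phi_{J,t}$ and $\tilde\phi_0$ cancel in the difference. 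From $|\phi_{J,t}-\tilde\phi_0|<\epsilon$ (Lemma~\ref{comparisonmetricIlemma}), on $\text{Log}^{-1}(W_\delta)$ the two arguments of the max differ by at least $\epsilon>2\eta$ with the second larger, so $\widetilde\phi_t=\tilde\phi_0+2\epsilon$ exactly and $\omega_{\psi,t}=dd^c\tilde\phi_0$ there; similarly outside $\text{Log}^{-1}(\cup_l B(p_l,\tfrac{3}{2}r(\delta)))$, $\widetilde\phi_t=\phi_{J,t}$ and $\omega_{\psi,t}=\omega_{FS,t}$; in the intermediate collar the regularisation gives a smooth interpolation. Thus $\psi_t\in[0,3\epsilon]$ (first and second bullets trivially satisfied, with empty non-smooth set), and the metric estimate holds as exact equality on $\text{Log}^{-1}(W_\delta)$ (third bullet with $C=0$).

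Positivity reduces, since the regularised max of $\omega_{FS,t}$-psh local potentials is $\omega_{FS,t}$-psh, to psh-ness of $\tilde\phi_0-4\epsilon\,\chi\circ\text{Log}_{\mathcal{X}}$ on the collar. In the polar coordinates $z_j=\exp(x_j\log|t|+\sqrt{-1}\theta_j)$ this becomes the symmetric matrix inequality $D^2\phi_0\geq4\epsilon\,D^2\chi$ on $\cup_l B(p_l,\tfrac{3}{2}r(\delta))\setminus W_\delta$. The real MA equation $\det D^2\phi_0=(L^n)/n!$ combined with $\|\phi_0\|_{C^k(W_\delta)}\leq C(\delta)$ forces $D^2\phi_0\geq c_\delta\,\mathrm{Id}$ (bounded trace and positive determinant imply a lower bound on the minimum eigenvalue), while $\|D^2\chi\|\leq C_\chi(\delta)$ is fixed; the inequality thus holds for $\epsilon<c_\delta/(4C_\chi)$, absorbed into the hypothesis $\epsilon\ll_\delta 1$.

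The main technical obstacle is the $L^1$-closeness of the volume densities. A direct local computation in the polar coordinates gives
\[
|\log|t||^n(dd^c\tilde\phi_0)^n=\frac{n!}{(4\pi)^n}\det(D^2\phi_0)\prod_j\sqrt{-1}\,d\log z_j\wedge d\log\bar{z}_j,
\]
a $T^n$-invariant $(n,n)$-form whose $\text{Log}_{\mathcal{X}}$-pushforward equals $n!\,MA_\R(\phi_0)=(L^n)\,d\mu_0$ by \eqref{realMACY}. On the other side, $d\mu_t$ has in these coordinates the $T^n$-invariant form $\mathrm{const}\cdot|u_J|^2\prod_j\sqrt{-1}d\log z_j\wedge d\log\bar{z}_j$ with $|u_J|^2\to|u_J(E_J)|^2$ uniformly on $W_\delta$, and $\text{Log}_{\mathcal{X}*}d\mu_t\to d\mu_0$ by \eqref{measureconvergence}. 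Hence on $\text{Log}^{-1}(W_\delta)$ the two normalised probability measures have matching $T^n$-invariant structure and matching limiting pushforward, so their total variation on this set is $o_t(1)$. Outside $\text{Log}^{-1}(W_\delta)$, shrinking the balls slightly if needed so that $d\mu_0(W_\delta^c)<\delta/4$, the $d\mu_t$-mass is at most $\delta/4+o_t(1)$, and the complementary mass of the probability measure $|\log|t||^n\omega_{\psi,t}^n/(L^n)$ equals $1-d\mu_0(W_\delta)-o_t(1)\leq\delta/4+o_t(1)$. Summing the three contributions yields total variation $<\delta$ once $\epsilon<\epsilon_0(\delta)$ and $t$ is taken sufficiently small.
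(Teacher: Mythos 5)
Your construction is correct and follows essentially the same route as the paper: glue $dd^c(\phi_0\circ\text{Log}_{\mathcal{X}})$ onto $\omega_{FS,t}$ by a maximum of local potentials over a collar around $W_\delta$, using the $\epsilon$-closeness of $\phi_{J,t}$ and $\phi_0\circ\text{Log}_{\mathcal{X}}$ from Lemma \ref{comparisonmetricIlemma} to confine the gluing to the collar, the uniform ellipticity $D^2\phi_0\geq c_\delta\,\mathrm{Id}$ (from the real MA equation together with the $C^k$ bounds on the balls $B_l$) to absorb the cutoff's Hessian and preserve positivity, and then the exact volume identity on $\text{Log}_{\mathcal{X}}^{-1}(W_\delta)$ plus mass bookkeeping on the complement for the total variation bound. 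The one difference is in your favour: the regularised maximum with a single downward cutoff makes $\psi_t$ globally smooth and gives $\omega_{\psi,t}=dd^c(\phi_0\circ\text{Log}_{\mathcal{X}})$ exactly on $\text{Log}_{\mathcal{X}}^{-1}(W_\delta)$, so you avoid the paper's per-ball upward bumps $2\epsilon\eta_l$, the resulting $(1\pm C\epsilon)$ factors, and the perturbation argument needed there to show that the non-smooth locus of the plain maximum has $d\mu_t$-measure zero.
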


\begin{proof}
Choose a smooth nonnegative bump function $\eta$ on $\R^n$ supported in $B(0,2)$, and equals one on $B(0,1)$, and construct $\eta_i=\eta( \frac{|x-p_i|}{r(\delta)} )$ supported on $B_i$. We calculate $\text{Log}_{\mathcal{X} }^{-1}( B_l)$
\[
\begin{split}
dd^c \phi_0\circ \text{Log}_{\mathcal{X} }= \frac{1}{4\pi|\log |t||^2}  \sum_{1\leq i,j\leq n} \frac{\partial^2 \phi_0}{\partial x_i\partial x_j} \sqrt{-1} d\log z_i\wedge d\log \bar{z}_j 
\\
\gtrsim_\delta  \frac{1}{ |\log |t||^2 } \sum  \sqrt{-1} d\log z_i\wedge d\log \bar{z}_j,
\end{split}
\]
\[
dd^c \eta_l\circ \text{Log}_{\mathcal{X} }\geq -\frac{C}{ |\log |t||^2 } \sum  \sqrt{-1} d\log z_i\wedge d\log \bar{z}_j,
\]
so if $0<\epsilon\ll 1$ is sufficiently small dependent on $\delta$, we can ensure $(\phi_0+2\epsilon \eta_l)\circ \text{Log}_{\mathcal{X} }$ is psh on $\text{Log}_{\mathcal{X} }^{-1}( B_l)$.

Consider the potential function on $\text{Log}_{\mathcal{X} }^{-1}(\text{Int}(\Delta_J))$
\[
\psi_t(z)=\max(0, \max_{ \text{Log}_{\mathcal{X}}(z)\in B_l  } \{  (\phi_0+2\epsilon \eta_l)\circ \text{Log}_{\mathcal{X} }- \epsilon -\phi_{J,t}          \} ),
\]
with $\phi_{J,t}$ from Lemma \ref{comparisonmetricIlemma}.

Since $|\phi_{J,t}- \phi_0\circ \text{Log}_{ \mathcal{X}}|<\epsilon$, we see that on $\partial B_l$ the maximum is never achieved by $(\phi_0+2\epsilon \eta_l)\circ \text{Log}_{\mathcal{X} }- \epsilon -\phi_{J,t}  $, so the fact that this term is only locally defined causes no problem. By construction every term is $dd^c\phi_{J,t}$-psh, so the maximum $\psi_t$ is also  $dd^c\phi_{J,t}$-psh.  Near the boundary of  $\text{Int}(\Delta_J)$ the maximum is achieved by $\psi_t=0$, so $\psi_t$ globalizes to define a Lipschitz continuous $\omega_{FS,t}$-psh function on $X_t$, with $\norm{\psi_t}_{L^\infty}\leq 3\epsilon$.

Morever, on $\text{Log}_{\mathcal{X} }^{-1}(W_\delta)$ the maximum of $\psi_t$ is strictly greater than zero. Perturbing the bump function in the construction if necessary, we may assume the locus on $X_t$ where the maximum is achieved by at least two terms is a subset of codimension one, and it is automatically closed. So a.e on $\text{Log}_{\mathcal{X} }^{-1}(W_\delta)$,
the metric $\omega_{\psi,t}=\omega_{FS,t}+ dd^c\psi_t$ is smooth and equals $dd^c(\phi_0+2\epsilon \eta_l)\circ \text{Log}_{\mathcal{X} } $ for some $l$. We calculate using the regularity estimates on $dd^c\phi_0$ that
\[
\omega_{\psi,t}^n=(dd^c(\phi_0+2\epsilon \eta_l)\circ \text{Log}_{\mathcal{X} } )^n=  ( 1+ O(\epsilon) ) (dd^c\phi_0\circ \text{Log}_{\mathcal{X} } )^n.
\]
Using the real MA equation (\ref{realMACY}), 
\[
(dd^c\phi_0\circ \text{Log}_{\mathcal{X} } )^n= n!\det(D^2\phi_0) \prod_i \frac{1}{4\pi|\log |t||^2} \sqrt{-1} d\log z_i\wedge d\log \bar{z}_i  ,
\]
where $\frac{(L^n)}{n!}d\mu_0=\det(D^2 \phi_0) dx_1\ldots dx_n$ determines the constant $\det(D^2 \phi_0)$. Comparing with section \ref{volumeasymptoteessentialskeleton}, the normalized CY measure $d\mu_t$ satisfies  
\[
\frac{(L^n)}{n!}d\mu_t= (1+ O(\frac{1}{|\log |t|| } ) ) \det(D^2 \phi_0) \prod_i \frac{1}{4\pi|\log |t||} \sqrt{-1} d\log z_i\wedge d\log \bar{z}_i .
\]
For sufficiently small $t$ depending on $\epsilon$ and $\delta$, we combine the above to deduce
\[
\omega_{\psi,t}^n= (1+O(\epsilon)) \frac{(L^n) }{ |\log |t||^n }d\mu_t.
\]
The total complex MA measure is $\int_{X_t}\omega_{\psi,t}^n= \frac{(L^n)}{|\log |t||^n}  $, and the contribution from the smooth region in  $\text{Log}_{\mathcal{X} }^{-1}(W_\delta)$ is greater than
$
(1-\delta) \frac{(L^n)}{|\log |t||^n} 
$ for very small $\epsilon$ dependent on $\delta$. Thus the measure contribution from the complement must be less than $\delta \frac{(L^n)}{|\log |t||^n} 
$, namely $\delta$-percent of the total measure. Thus the total variation of the signed measure $d\mu_t- \frac{ |\log |t||^n }{ (L^n) } \omega_{\psi,t}^n$ is smaller than $\delta
$ for small enough $\epsilon$. 
\end{proof}

\subsection{Potential estimate I}

We denote the CY metrics on $(X_t, \frac{1}{|\log |t||}c_1(L))$ as
\[
\omega_{CY,t}=\omega_{FS,t}+ dd^c \phi_{CY,t}.
\]
Using the local potentials $\phi_{J,t}$ of $\omega_{FS,t}$, we can write $\omega_{CY,t}$ in terms of local absolute potentials on $\text{Log}_{\mathcal{X} }^{-1}(\text{Int}(\Delta_J))$:
\[
\omega_{CY,t}=dd^c \phi_{CY,J,t}, \quad \phi_{CY,J,t}=\phi_{J,t}+ \phi_{CY,t}.
\]
This depends on an implicit choice of $\omega_{FS,t}$ and $\phi_{J,t}$ in Lemma 
\ref{comparisonmetricIlemma}. Our goal is to find a suitable choice and show the smallness of $|\phi_{CY,J,t}- \phi_0\circ\text{Log}_{\mathcal{X} }| $ in the generic region.

\begin{prop}\label{UniformLinftyCY}
For $0<|t|\ll 1$, the CY potentials have a uniform bound $\norm{\phi_{CY,t}}_{L^\infty} \leq C$ under the normalisation $\sup_{X_t}\phi_{CY,t}=0$.
\end{prop}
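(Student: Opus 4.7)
The proof should be a direct application of the uniform Skoda estimate (Theorem \ref{UniformSkodathm}) combined with the second bullet of Kolodziej's Theorem \ref{pluripotentialthm1}. I would proceed as follows.

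First, I would identify the Monge--Amp\`ere measure of $\phi_{CY,t}$ with the normalised Calabi--Yau probability measure. Since $\omega_{FS,t}$ and $\omega_{CY,t}$ are cohomologous, the total volumes coincide: $\int_{X_t}\omega_{FS,t}^n = \int_{X_t}\omega_{CY,t}^n = (L^n)/|\log|t||^n$. The defining relation $\omega_{CY,t}^n / \int_{X_t}\omega_{CY,t}^n = d\mu_t$ then gives
\[
\frac{\omega_{CY,t}^n}{\text{Vol}(X_t)} = d\mu_t,
\]
where $\text{Vol}(X_t)$ is taken with respect to $\omega_{FS,t}$.

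Next, I would invoke Theorem \ref{UniformSkodathm} from the companion paper, which provides uniform constants $\alpha, A>0$, independent of $t$ for $0<|t|\ll 1$, such that for every $u\in PSH(X_t,\omega_{FS,t})$ with $\sup_{X_t} u=0$,
\[
\int_{X_t} e^{-\alpha u}\, d\mu_t \leq A.
\]
Translating through the identity above, this is exactly the Skoda-type hypothesis (\ref{Skodaassumption}) of Theorem \ref{pluripotentialthm1} applied to $\phi=\phi_{CY,t}$ on the K\"ahler manifold $(X_t,\omega_{FS,t})$. Note that $\phi_{CY,t}$ is smooth by Yau's theorem, so continuity and absolute continuity of $\omega_{\phi_{CY,t}}^n$ are automatic.

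Finally, the second bullet of Theorem \ref{pluripotentialthm1} yields a bound $\norm{\phi_{CY,t}}_{C^0}\leq C(n,\alpha,A)$ under the normalisation $\sup_{X_t}\phi_{CY,t}=0$. Since $n$ is fixed and $\alpha,A$ are independent of $t$, the resulting constant $C$ is uniform for small $t$, as required. The proof encounters no real obstacle: the serious analytic content has been pre-packaged into the uniform Skoda estimate, whose difficulty lies in handling the collapsing geometry of $(X_t,\omega_{FS,t})$. Once that input is granted, Kolodziej's pluripotential machinery, which is explicitly designed to be insensitive to the underlying K\"ahler structure beyond the three constants $n,\alpha,A$, delivers the $C^0$ bound immediately.
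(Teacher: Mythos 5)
Your proof is correct and is essentially the paper's own argument: the paper likewise just combines the uniform Skoda estimate of Theorem \ref{UniformSkodathm} with the uniform $C^0$-estimate of Theorem \ref{pluripotentialthm1}. The only point the paper makes explicit that you leave implicit is that the bound does not depend on the choice of $\epsilon$ entering the construction of $\omega_{FS,t}$ in Lemma \ref{comparisonmetricIlemma}, since the relative K\"ahler potentials of any two such choices are uniformly bounded.
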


\begin{proof}
Combine the uniform $C^0$-estimate Theorem \ref{pluripotentialthm1} and the uniform Skoda estimate Theorem \ref{UniformSkodathm}, we know the CY potential with respect to any fixed choice of Fubini-Study background metric is uniformly bounded for small $t$. Here the implicit choice of $\epsilon$ in $\omega_{FS,t}$ does not matter because of Lemma \ref{comparisonmetricIlemma}.
\end{proof}

\begin{prop}
Given small numbers $0<\lambda, \kappa\ll 1$, then for $\delta, \epsilon, t$ sufficiently small depending on $\lambda$ and $\kappa$, the function $\phi_{CY,t}$ is near its minimum with large probability:
\[
d\mu_t (\{   \phi_{CY,t}- \min_{X_t} \phi_{CY,t} \geq \kappa/5    \} ) < \lambda. 
\]
\end{prop}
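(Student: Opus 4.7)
The plan is to apply the uniform $L^1$-stability estimate (Theorem \ref{UniformL1stabilitythm}) with background K\"ahler metric $\omega = \omega_{CY,t}$ and test function $\phi = \psi_t - \phi_{CY,t} - c$, where $\psi_t$ is the regularised potential from Lemma \ref{regularisationlemma} and $c$ is chosen as the $d\mu_t$-median of $\psi_t - \phi_{CY,t}$, so that $\{\phi \leq 0\}$ has $d\mu_t$-mass at least $1/2$. Since $\omega_{CY,t} + dd^c \phi = \omega_{\psi,t} \geq 0$, indeed $\phi \in PSH(X_t, \omega_{CY,t})$ and $\phi$ is Lipschitz continuous.

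The remaining hypotheses of Theorem \ref{UniformL1stabilitythm} are verified as follows. The Skoda estimate for $d\mu_t$ with respect to $\omega_{CY,t}$-psh functions follows from Theorem \ref{UniformSkodathm} together with Prop \ref{UniformLinftyCY}: for any $u \in PSH(X_t, \omega_{CY,t})$ with $\sup u = 0$, the shift $u + \phi_{CY,t}$ lies in $PSH(X_t, \omega_{FS,t})$ with uniformly bounded supremum, so the Skoda bound transfers with slightly adjusted constants. The $L^1$-stability input $\int_{X_t} |d\mu_t - \omega_{\psi,t}^n / \int_{X_t}\omega_{\psi,t}^n| \leq s^{2n+3}$ is secured from Lemma \ref{regularisationlemma} by demanding $\delta < s^{2n+3}$. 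The regularity of $\phi$ off a $d\mu_t$-null set and its uniform $L^\infty$ bound come from Lemma \ref{regularisationlemma} and Prop \ref{UniformLinftyCY}. Theorem \ref{UniformL1stabilitythm} then delivers the one-sided pointwise bound $\phi_{CY,t} \geq \psi_t - c - C' s$ on $X_t$.

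Combined with $\psi_t \geq 0$, this yields $\min_{X_t} \phi_{CY,t} \geq -c - C' s$. To pass to the measure-theoretic conclusion, I still need a matching upper bound on $\phi_{CY,t}$ on a set of large $d\mu_t$-measure; the natural route is a second application of Theorem \ref{UniformL1stabilitythm} with the roles reversed, taking $\omega_{\psi,t}$ as background and perturbation $\phi_{CY,t} - \psi_t - c'$. The main obstacle is that $\omega_{\psi,t}$ is merely Lipschitz rather than smooth K\"ahler, so I would first smooth $\psi_t$ to $\psi_t^\rho$ so that $\omega_{\psi^\rho,t}$ is a genuine smooth K\"ahler form whose volume form remains close to $d\mu_t$ in total variation, and then transfer the Skoda inequality from $d\mu_t$ to $\omega_{\psi^\rho, t}^n/\int_{X_t}\omega_{\psi^\rho, t}^n$. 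I expect this transfer to be the hardest step, since the Skoda bound is an exponential-integrability statement sensitive to large deviations while only a total-variation estimate on the densities is directly available; one likely has to exploit that on $\text{Log}_{\mathcal{X}}^{-1}(W_\delta)$ the two densities are pointwise comparable and that the complement carries only $O(\delta)$ of the $d\mu_t$-mass. Once the two-sided bound $|\phi_{CY,t} - \psi_t - \mathrm{const}| \leq C'' s$ is in hand, combined with $\psi_t \in [0, 3\epsilon]$ the oscillation of $\phi_{CY,t}$ is at most $3\epsilon + 2 C'' s$; choosing first $\epsilon$ small in terms of $\kappa$, then $\delta$ (hence $s$) small, and finally $t$ small per Lemma \ref{regularisationlemma}, this oscillation is less than $\kappa/5$, yielding the claimed bound (in fact a pointwise one, a fortiori the one in $d\mu_t$-measure).
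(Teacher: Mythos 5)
Your first application of Theorem \ref{UniformL1stabilitythm} (with $\omega=\omega_{CY,t}$, $\phi=\psi_t-\phi_{CY,t}-c$, the Skoda transfer via Prop.\ \ref{UniformLinftyCY}, and $\delta<s^{2n+3}$) matches the paper and is fine as far as it goes: it yields the global lower bound $\phi_{CY,t}\geq -c-C's$. But from there your plan breaks down in two ways. First, with $c$ taken to be the \emph{median}, the only set on which you can certify $\phi_{CY,t}\leq \psi_t-c\leq 3\epsilon-c$ is $\{\psi_t-\phi_{CY,t}-c\geq 0\}$, whose measure is merely $\geq 1/2$; its complement can have mass up to $1/2$, which is far larger than $\lambda$, so the stated conclusion does not follow from this step. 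Second, the route you propose to repair this -- a reversed application of the stability theorem with $\omega_{\psi,t}$ as background -- is exactly what the asymmetric formulation of Theorem \ref{UniformL1stabilitythm} was designed to avoid: it would require a uniform Skoda inequality for the measure $\omega_{\psi,t}^n/\int\omega_{\psi,t}^n$, and as you yourself note, the only available control on that density outside $\text{Log}_{\mathcal{X}}^{-1}(W_\delta)$ is a total-variation bound, which cannot give exponential integrability. The paper's remark following this proposition states this obstruction explicitly. So the second half of your argument is not a technical loose end but a genuine dead end.

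The missing idea is a contrapositive that makes a single one-sided application suffice. Run your first step not with the median but with \emph{any} $c$ satisfying $d\mu_t(\{\psi_t-\phi_{CY,t}-c\leq 0\})\geq\lambda$ (the theorem's hypothesis $\int_{E_0^c}d\mu\geq\lambda$ is exactly this); the conclusion $\sup(\psi_t-\phi_{CY,t}-c)\leq C(\lambda)\delta^{1/(2n+3)}\ll\kappa$ together with $|\psi_t|\leq 3\epsilon$ then forces $\inf\phi_{CY,t}>-c-\kappa/10$. Now take $c=-\inf\phi_{CY,t}-\kappa/10$: the conclusion fails for this $c$, so the hypothesis must fail, i.e.\ $d\mu_t(\{\psi_t-\phi_{CY,t}-c\leq 0\})<\lambda$. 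Since $0\leq\psi_t\leq 3\epsilon\ll\kappa$, the set $\{\phi_{CY,t}-\inf\phi_{CY,t}\geq\kappa/5\}$ is contained in $\{\psi_t-\phi_{CY,t}-c\leq 0\}$, and the measure bound follows. No upper bound on $\phi_{CY,t}$, no smoothing of $\psi_t$, and no Skoda estimate for $\omega_{\psi,t}^n$ are needed.
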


\begin{proof}
We wish to compare $\omega_{CY,t}$ with $\omega_{\psi,t}$ from Lemma \ref{regularisationlemma} by an $L^1$-stability estimate. 
Pick a parameter $c$ such that
\[
d\mu_t(  \{ \psi_t- \phi_{CY,t} -c\leq 0 \}      )\geq  \lambda.
\]
Since the potential $\phi_{CY,t}$ has a uniform bound, Theorem \ref{UniformSkodathm} implies another uniform Skoda estimate with modified constants
\[
\int_{X_t} e^{-\alpha u}  d\mu_t \leq A, \quad \forall u\in PSH(X_t, \omega_{CY,t}) \text{ with } \sup_{X_t} u=0.
\]
We also have the $L^1$-stability property for $\omega_{\psi,t}$ in Lemma \ref{regularisationlemma}:
\[
\int_{X_t} |d\mu_t- \frac{ \omega_{\psi,t}^n}{ \text{Vol}(X_t, \omega_{\psi,t}) }|< \delta.
\]

We then apply the uniform $L^1$-stability estimate Theorem \ref{UniformL1stabilitythm}, 
with $Y=X_t$, $\omega= \omega_{CY,t}$ and $\phi=\psi_t-\phi_{CY,t}-c$. In this construction $\delta, \epsilon, t$ are sufficiently small, chosen successively depending on $\lambda$ and $\kappa$. We conclude
\[
\sup_{X_t} (\psi_t-\phi_{CY,t}-c) \leq C(\lambda)\delta^{1/(2n+3)} \ll \kappa.
\]
Now $|\psi_t|\leq 3\epsilon \ll \kappa$, so 
$
\inf_{X_t} \phi_{CY,t} > -c-\kappa/10.
$

Taking the contrapositive, if we choose $c= -\inf_{X_t} \phi_{CY,t}-\kappa/10 $, then 
\[
d\mu_t(  \{ \psi_t- \phi_{CY,t} -c\leq 0 \}      )< \lambda,
\]
whence for $\epsilon\ll \kappa$, using again $|\psi_t|\leq 3\epsilon$,
\[
d\mu_t(  \{ \phi_{CY,t} - \inf \phi_{CY,t}\geq \kappa/5 \}      )< \lambda.
\]
\end{proof}

\begin{rmk}
The reason we use an asymmetric version of the $L^1$-stability estimate, is that we have no control on the density of the comparison metric $\omega_{\psi,t}$ away from the the generic region except for a small bound on the measure contribution there.
\end{rmk}

We can reformulate this in terms of the local potentials of the CY metrics, and thereby eliminate auxiliary choices of Fubini-Study metric and regularisation.

\begin{cor}
Given small numbers $0<\lambda, \kappa\ll 1$, then for $t$ small enough depending on $\lambda, \kappa$, there exist appropriately chosen local potentials $\phi_{CY,J,t}$ on $\text{Log}_{\mathcal{X} }^{-1}(\text{Int}(\Delta_J) )$, normalized to $\inf (\phi_{CY,J,t}- \phi_0\circ \text{Log}_{\mathcal{X} }) =0$, satisfying
\[
d\mu_t( \{  \phi_{CY,J,t}- \phi_0\circ \text{Log}_{\mathcal{X} }\geq \kappa/4 \} ) <\lambda,
\]
and $\norm{ \phi_{CY,J,t}- \phi_0\circ \text{Log}_{\mathcal{X} } }_{L^\infty} \leq C$ independent of $\lambda, \kappa$ and small $t$.

\end{cor}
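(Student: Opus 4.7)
The plan is to treat the Corollary as a bookkeeping reformulation of the preceding Proposition, translated from the global K\"ahler potential $\phi_{CY,t}$ into local absolute potentials adapted to the skeleton. The bridge is the pointwise identity
\[
\phi_{CY,J,t} - \phi_0\circ\text{Log}_{\mathcal{X}} \;=\; \bigl(\phi_{J,t} - \phi_0\circ\text{Log}_{\mathcal{X}}\bigr) + \phi_{CY,t},
\]
valid on $\text{Log}_{\mathcal{X}}^{-1}(\text{Int}(\Delta_J))$, in which the parenthesized term is uniformly controlled by $\epsilon$ thanks to Lemma \ref{comparisonmetricIlemma}, while $\phi_{CY,t}$ is the object addressed directly by the preceding Proposition under the normalization $\sup_{X_t}\phi_{CY,t} = 0$ from Proposition \ref{UniformLinftyCY}. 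The Corollary then essentially amounts to transferring the concentration of $\phi_{CY,t}$ near its global minimum into concentration of $\phi_{CY,J,t} - \phi_0\circ\text{Log}_{\mathcal{X}}$ near its local infimum.

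First I would fix $\epsilon \leq \kappa/40$ in Lemma \ref{comparisonmetricIlemma}, and then run the cascade of Lemma \ref{regularisationlemma} and the preceding Proposition (selecting $\delta$ small depending on $\lambda,\kappa,\epsilon$ and $t$ small depending on all of the above) to obtain
\[
d\mu_t\bigl(\{\phi_{CY,t} - \min_{X_t}\phi_{CY,t} \geq \kappa/5\}\bigr) < \lambda.
\]
I would then set $\phi_{CY,J,t} := \phi_{J,t} + \phi_{CY,t}$ and add a single real constant $c_J$ on each open face so that $\inf(\phi_{CY,J,t} - \phi_0\circ\text{Log}_{\mathcal{X}}) = 0$ on $\text{Log}_{\mathcal{X}}^{-1}(\text{Int}(\Delta_J))$; this shift is legitimate because local absolute potentials of $\omega_{FS,t}$ are determined only modulo pluriharmonic functions, and constants suffice. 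From the identity and $|\phi_{J,t} - \phi_0\circ\text{Log}_{\mathcal{X}}| < \epsilon$ one gets $|c_J| \leq \epsilon + \norm{\phi_{CY,t}}_{L^\infty} \leq \epsilon + C$ with $C$ from Proposition \ref{UniformLinftyCY}, whence the uniform $L^\infty$-bound $\norm{\phi_{CY,J,t} - \phi_0\circ\text{Log}_{\mathcal{X}}}_{L^\infty} \leq 2C+1$ is immediate.

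For the measure estimate, the same identity together with the bound on $c_J$ yields the pointwise implication
\[
\phi_{CY,J,t}(z) - \phi_0(\text{Log}_{\mathcal{X}}(z)) \geq \kappa/4 \;\Longrightarrow\; \phi_{CY,t}(z) - \min_{X_t}\phi_{CY,t} \geq \kappa/4 - 2\epsilon \geq \kappa/5,
\]
using the choice $\epsilon \leq \kappa/40$, so the claimed measure bound follows by set inclusion from the Proposition. I do not expect any genuine obstacle here: the analytic heart of the argument has already been spent in the preceding Proposition (where the uniform $L^1$-stability Theorem \ref{UniformL1stabilitythm} is invoked), and what remains is merely the repackaging into local coordinates near the skeleton needed for the SYZ fibration construction downstream. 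The only delicate point is quantifier ordering --- $\epsilon$ must be made small relative to $\kappa$ \emph{before} invoking the cascade of Lemmas and the Proposition --- so that the $\kappa/5$-threshold of the Proposition absorbs the Fubini-Study approximation error and yields the $\kappa/4$-threshold of the Corollary.
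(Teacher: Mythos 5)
Your proposal is correct and follows essentially the same route as the paper: define $\phi_{CY,J,t}=\phi_{J,t}+\phi_{CY,t}$ up to a face-wise constant fixed by the $\inf=0$ normalization, absorb the $\epsilon$-sized Fubini--Study approximation error of Lemma \ref{comparisonmetricIlemma} into the gap between the $\kappa/5$ and $\kappa/4$ thresholds, and deduce the measure bound from the preceding Proposition and the $L^\infty$ bound from Prop.~\ref{UniformLinftyCY}. The paper's proof is terser but identical in substance; your explicit tracking of the constant $c_J$ and the quantifier ordering $\epsilon\leq\kappa/40$ before invoking the cascade is exactly the bookkeeping the paper leaves implicit.
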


\begin{proof}
By construction in Lemma \ref{comparisonmetricIlemma}, the local potential $\phi_{J,t}$ of $\omega_{FS,t}$ is $\epsilon$-close to $\phi_0\circ \text{Log}_{\mathcal{X} }$, and since $\epsilon\ll \kappa$ these two are practically the same. Up to an overall normalisation constant, which is fixed by $\inf=0$, we have
$
\phi_{CY,J,t}= \phi_{CY,t}+ \phi_{J,t},
$
so the measure bound follows from the previous result.

The uniform $L^\infty$ bound follows from Prop. \ref{UniformLinftyCY} and Lemma \ref{comparisonmetricIlemma} without any reference to $\lambda, \kappa$.
\end{proof}

Given $0<\tau\ll 1$, we consider the region obtained from shrinking the $n$-dimensional faces near the boundary:
\[
U_{J,t,\tau}= \text{Log}_{\mathcal{X}}^{-1}(  \Delta_J\setminus  \{ |x_i|<\tau, |1-\sum_1^n x_i|<\tau   \} ).
\]
The following theorem is a precise formulation for $C^0_{loc}$-convergence of the local CY potentials to $\phi_0$ over the $n$-dimensional open faces of $Sk(X)$ as $t\to 0$.

\begin{thm}\label{C0convergence}
($C^0_{loc}$-convergence estimate on the potential)
Given $0<\tau, \kappa\ll 1$, then for sufficiently small $t$, on each $U_{J,t,\tau}$ there is a $C^0$-bound
\[
0\leq \phi_{CY,J,t}- \phi_0\circ \text{Log}_{\mathcal{X} }<\kappa.
\] 
\end{thm}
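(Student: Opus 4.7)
The normalization $\inf(\phi_{CY,J,t}-\phi_0\circ\text{Log}_{\mathcal{X}})=0$ immediately gives the lower bound, so the plan is to establish the upper bound $f:=\phi_{CY,J,t}-\phi_0\circ\text{Log}_{\mathcal{X}}<\kappa$ on $U_{J,t,\tau}$ by upgrading the measure-theoretic control $\mu_t(\{f>\kappa/4\})<\lambda$ from the preceding corollary into a pointwise bound. I would apply the sub-mean-value inequality for the psh function $\phi_{CY,J,t}$ on a carefully scaled polydisc in logarithmic coordinates $\zeta_i=\log z_i$. The key observation is that $\phi_0\circ\text{Log}_{\mathcal{X}}$ is nearly constant on polydiscs that are moderate in the $x$-direction, so the sub-mean-value estimate effectively reduces to bounding the average of $f$, for which the measure bound on the bad set is tailor-made.

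Since $\phi_0$ is convex on $\text{Int}(\Delta_J)$, it is Lipschitz with some constant $L_\tau$ on the compact subset $\{x\in\Delta_J:\text{dist}(x,\partial\Delta_J)\geq\tau/2\}$. First I would fix $\rho=\min(\tau/4,\kappa/(16 L_\tau))$, and then choose $\lambda$ small enough so that $C_0\lambda/(C_1\rho^n)<\kappa/4$, where $C_0$ is the uniform $L^\infty$-bound on $f$ from the corollary and $C_1>0$ is a lower bound on the density of $\mu_0$ along the top-dimensional faces of $Sk(\mathcal{X})$ coming from section \ref{volumeasymptoteessentialskeleton}. The corollary then supplies the required measure and sup-norm control on $f$ for $t$ small enough depending on $\tau,\kappa$. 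For any $z^0\in U_{J,t,\tau}$ with $x^0=\text{Log}_{\mathcal{X}}(z^0)$, in local holomorphic coordinates $(z_1,\ldots,z_n)$ with $\prod_{i=0}^n z_i=t$ determining $z_0$ and $\zeta_i=\log z_i$, I would apply sub-mean-value to $\phi_{CY,J,t}$ (psh in $\zeta$) over the polydisc $P=\prod_{i=1}^n D_R(\zeta_i^0)$ with $R=\rho|\log |t||$:
\[
\phi_{CY,J,t}(z^0)\leq\frac{1}{\text{Vol}_\zeta(P)}\int_P\phi_{CY,J,t}\,d\text{vol}_\zeta.
\]

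Subtracting $\phi_0(x^0)$ and noting that the $x$-projection of $P$ is the Euclidean ball of radius $\rho$ around $x^0$, so that $|\phi_0\circ\text{Log}_{\mathcal{X}}-\phi_0(x^0)|\leq L_\tau\rho\leq\kappa/16$ on $P$, the problem reduces to bounding the $d\text{vol}_\zeta$-average of $f$ on $P$. From $d\text{vol}_\zeta=|\log |t||^n\,dx\,d\theta$ together with the asymptotic identity $d\mu_t=c(t,x)\,dx\,d\theta$ with $c(t,x)$ uniformly comparable to a positive constant on the compact region (section \ref{volumeasymptoteessentialskeleton}), the $d\text{vol}_\zeta$-average of $f$ over $P$ and the $d\mu_t$-average of $f$ over the image $\text{Log}_{\mathcal{X}}^{-1}(B_\rho(x^0))$ agree up to a relative error that vanishes as $t\to 0$. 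Splitting at the level $\kappa/4$ and using $\mu_t(\text{Log}_{\mathcal{X}}^{-1}(B_\rho(x^0)))\geq C_1\rho^n$ yields
\[
\text{avg}_P f\leq \kappa/4+\frac{C_0\lambda}{C_1\rho^n}<\kappa/2
\]
by choice of $\lambda$, so $f(z^0)<\kappa/16+\kappa/2<\kappa$ as required.

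The main obstacle is calibrating the scale of the polydisc: $R$ must be large enough in complex radius that the image $\text{Log}_{\mathcal{X}}^{-1}(B_\rho(x^0))$ has $\mu_t$-measure of order $\rho^n$ (substantially exceeding $\lambda$), yet small enough in $x$-space that $\phi_0\circ\text{Log}_{\mathcal{X}}$ is essentially constant there. The choice $R=\rho|\log |t||$ resolves this tension, exploiting the fact that the characteristic length scale of $\mu_t$ in the logarithmic complex coordinates grows like $|\log |t||$. A secondary technical point is verifying that the sub-mean-value passes to the quotient under the periodicity $\zeta\mapsto\zeta+2\pi i\mathbb{Z}^n$: this is automatic since $\phi_{CY,J,t}$ pulls back to a periodic psh function on the $\zeta$-universal cover, and the equivalence of the $d\text{vol}_\zeta$-average with the $d\mu_t$-average follows from the explicit asymptotic measure comparison already established.
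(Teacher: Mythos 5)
Your proposal is correct and follows essentially the same route as the paper: normalize so the infimum is zero, then apply the sub-mean-value inequality for the psh function $\phi_{CY,J,t}$ on a ball/polydisc in the logarithmic covering space whose $\text{Log}_{\mathcal{X}}$-image is a small ball of radius $r\ll\tau$, control the oscillation of $\phi_0$ there by its Lipschitz bound, and estimate the average of $\phi_{CY,J,t}-\phi_0\circ\text{Log}_{\mathcal{X}}$ by splitting into the good set (where it is $<\kappa/4$ by the preceding corollary) and the bad set of relative $d\mu_t$-proportion $O(\lambda r^{-n})$, on which the uniform $L^\infty$ bound applies. Your write-up is somewhat more explicit than the paper's about the scale calibration $R=\rho|\log|t||$ and the comparison between the Euclidean average in $\zeta$-coordinates and the $d\mu_t$-average, but the argument is the same.
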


\begin{proof}
We need to obtain upper bound on $\phi_{CY,J,t}$. Consider $z\in U_{J,t,\tau}$ and $x=\text{Log}_{\mathcal{X} }(z)$.
Let $r\ll \tau$ be a parameter to be fixed, so $B(x,3r)\subset \text{Int}(\Delta_J)$. The function $\phi_0$ has an a priori Lipschitz estimate on $\Delta_{\mathcal{X}}$, so the oscillation of $\phi_0$ on $B(x,3r)$ is less than $Cr\ll \kappa$ by choosing $r$ small enough.

Now we apply the mean value inequality to the psh function $\phi_{CY,J,t}$ on a ball in the local covering space of $U_{J,t,\tau}\subset (\C^*)^n$, which projects to $B(x,r)$ via $\text{Log}_{\mathcal{X} }$. We have
\[
\phi_{CY,J,t}(z) \leq \dashint_{ball} \phi_{CY,J,t},
\]
hence
\[
\begin{split}
(\phi_{CY,J,t} - \phi_0\circ \text{Log}_{\mathcal{X} })(z) & \leq \text{osc}_{B(r)} \phi_0 + \dashint_{ball} (\phi_{CY,J,t}- \phi_0\circ \text{Log}_{\mathcal{X} })\\
& \leq \frac{\kappa}{10}+ \dashint_{ball} (\phi_{CY,J,t}- \text{Log}_{\mathcal{X} }).
\end{split}
\]
But on the ball $\phi_{CY,J,t}- \text{Log}_{\mathcal{X} }< \kappa/4$, except on a subset of the ball with $d\mu_t$-percentage $\leq C\lambda r^{-n}  $, on which we use the coarser bound 
 $\phi_{CY,J,t}- \phi_0\circ \text{Log}_{\mathcal{X} } \leq C$.
Combining these,
\[
(\phi_{CY,J,t} - \phi_0\circ \text{Log}_{\mathcal{X} })(z) < \kappa/2+ C\lambda r^{-n} <\kappa,
\]
by choosing $\lambda$ sufficiently small depending on $\kappa, \tau$.
\end{proof}

\begin{rmk}
The above estimates do not use the full strength of the regularisation lemma \ref{regularisationlemma}. We only use the $L^1$-stability of the volume density, not the metric information.
\end{rmk}

\subsection{Potential estimate II}

Here we present a  second strategy for the potential estimate, which aims to circumvent the uniform $L^1$-stability estimate Theorem \ref{UniformL1stabilitythm}, and we explain why there is a difficulty with this second approach. Readers who wish to follow the main line of the proof may skip this section.

\begin{lem}
Given $0<\delta\ll 1$, then for $0<\epsilon\ll 1$ depending on $\delta$, and $t$ small enough depending on $\epsilon, \delta$, 
\[
\int_{ \text{Log}_{\mathcal{X} }^{-1}(W_\delta) } d(\psi_t-\phi_{CY,t})\wedge d^c(\psi_t-\phi_{CY,t}) \wedge (dd^c\phi_0\circ\text{Log}_{\mathcal{X} })^{n-1} \leq \frac{C\delta}{ |\log |t||^n} , 
\]	
where $C$ is independent of $\delta, \epsilon, t$.
\end{lem}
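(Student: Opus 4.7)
The plan is to carry out a global integration by parts on $X_t$ for the bounded function $u := \psi_t - \phi_{CY,t}$, then restrict to $\text{Log}_{\mathcal{X}}^{-1}(W_\delta)$ and use the metric comparison in Lemma \ref{regularisationlemma} to replace $\omega_{\psi,t}^{n-1}$ by $(dd^c\phi_0\circ\text{Log}_{\mathcal{X}})^{n-1}$. First, note $u\in L^\infty(X_t)$ with $\norm{u}_{L^\infty}\leq 3\epsilon+\norm{\phi_{CY,t}}_{L^\infty}\leq C$ (the second bound is Prop.\ \ref{UniformLinftyCY} together with the choice-independence in Lemma \ref{comparisonmetricIlemma}), and $dd^c u=\omega_{\psi,t}-\omega_{CY,t}$. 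The telescoping identity
\[
\omega_{\psi,t}^n - \omega_{CY,t}^n = dd^c u\wedge \sum_{k=0}^{n-1}\omega_{CY,t}^k\wedge\omega_{\psi,t}^{n-1-k}
\]
together with Bedford--Taylor integration by parts (valid since the positive closed $(n-1,n-1)$-currents $\omega_{CY,t}^k\wedge\omega_{\psi,t}^{n-1-k}$ are defined for bounded $\omega_{FS,t}$-psh functions) yields
\[
\sum_{k=0}^{n-1}\int_{X_t} du\wedge d^cu\wedge \omega_{CY,t}^k\wedge\omega_{\psi,t}^{n-1-k} \;=\; \int_{X_t} u\,(\omega_{CY,t}^n-\omega_{\psi,t}^n).
\]

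Each integrand on the left is a nonnegative measure, so any single term is bounded by the RHS. The total variation of $\omega_{CY,t}^n-\omega_{\psi,t}^n$ is controlled by the $L^1$-stability in Lemma \ref{regularisationlemma}: since $\omega_{CY,t}^n=\frac{(L^n)}{|\log|t||^n}d\mu_t$, we have
\[
\int_{X_t}|\omega_{CY,t}^n-\omega_{\psi,t}^n| \;=\; \frac{(L^n)}{|\log|t||^n}\int_{X_t}\Bigl|d\mu_t-\tfrac{|\log|t||^n\omega_{\psi,t}^n}{(L^n)}\Bigr| \;\leq\; \frac{(L^n)\,\delta}{|\log|t||^n}.
\]
Taking only the $k=0$ term on the left therefore gives the global estimate
\[
\int_{X_t} du\wedge d^cu\wedge \omega_{\psi,t}^{n-1} \;\leq\; \norm{u}_{L^\infty}\cdot \int_{X_t}|\omega_{CY,t}^n-\omega_{\psi,t}^n| \;\leq\; \frac{C\delta}{|\log|t||^n}.
\]

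To finish, I would apply the metric comparison from Lemma \ref{regularisationlemma}: on the smooth locus of $\psi_t$ inside $\text{Log}_{\mathcal{X}}^{-1}(W_\delta)$ we have $\omega_{\psi,t}\geq (1-C\epsilon)\, dd^c\phi_0\circ\text{Log}_{\mathcal{X}}$, hence $(dd^c\phi_0\circ\text{Log}_{\mathcal{X}})^{n-1}\leq (1-C\epsilon)^{-(n-1)}\omega_{\psi,t}^{n-1}\leq 2\,\omega_{\psi,t}^{n-1}$ for $\epsilon$ small enough depending on $n$. The non-smooth locus of $\psi_t$ is closed of real codimension $\geq 1$ and therefore negligible for both sides, which are absolutely continuous measures on $W_\delta$. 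Restricting the global bound to $\text{Log}_{\mathcal{X}}^{-1}(W_\delta)$ then gives the desired estimate. The main technical point is the integration by parts when $\psi_t$ is merely Lipschitz so that $\omega_{\psi,t}$ is only a closed positive current; this is precisely what Bedford--Taylor's theory for bounded psh functions supplies, and once that is granted the rest is a one-line telescoping plus an $L^\infty\cdot L^1$ estimate.
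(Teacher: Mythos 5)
Your proposal is correct and follows essentially the same route as the paper: the same telescoping of $\omega_{\psi,t}^n-\omega_{CY,t}^n$, the same integration by parts bounding the gradient term by $\norm{u}_{L^\infty}$ times the total variation from Lemma \ref{regularisationlemma}, and the same final appeal to the metric comparison $\omega_{\psi,t}\gtrsim dd^c\phi_0\circ\text{Log}_{\mathcal{X}}$ a.e.\ on $\text{Log}_{\mathcal{X}}^{-1}(W_\delta)$. The only cosmetic difference is that you keep just the $k=0$ term of the telescoping sum while the paper drops all but the $\omega_{\psi,t}^{n-1}$ term, which is the same step.
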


\begin{proof}
Pretending everything is smooth, a standard integration by part gives
\[
\begin{split}
&\int_{X_t}(\psi_t-\phi_{CY,t})(\omega_{CY,t}^n- \omega_{\psi,t}^n) \\
=& \int_{X_t} (\psi_t-\phi_{CY,t}) dd^c (-\psi_t+\phi_{CY,t}  ) \wedge (\omega_{CY,t}^{n-1}+\ldots+ \omega_{\psi,t}^{n-1}  ) \\
=& \int_{X_t} d(\psi_t-\phi_{CY,t})\wedge d^c(\psi_t-\phi_{CY,t}) \wedge (\omega_{CY,t}^{n-1}+\ldots+ \omega_{\psi,t}^{n-1}  ) \\
\geq &  \int_{X_t} d(\psi_t-\phi_{CY,t})\wedge d^c(\psi_t-\phi_{CY,t}) \wedge  \omega_{\psi,t}^{n-1} . 
\end{split}
\]
The same calculations work for continuous $\omega_{FS,t}$-psh functions by standard pluripotential theory.

Combine $\norm{\phi_{CY,t} }_{L^\infty }\leq C$ with the total variation bound in Lemma \ref{regularisationlemma},
\[
 \int_{X_t} | \frac{ |\log |t||^n \omega_{\psi,t}^n }{ (L^n)  }  -d\mu_t|  <\delta,
\]
we get
\[
\int_{X_t} d(\psi_t-\phi_{CY,t})\wedge d^c(\psi_t-\phi_{CY,t}) \wedge  \omega_{\psi,t}^{n-1} \leq C\int_{X_t}|\omega_{CY,t}^n- \omega_{\psi,t}^n| \leq \frac{C\delta}{ |\log |t||^n} . 
\]
Again by Lemma \ref{regularisationlemma}, the metric $\omega_{\psi,t}$ is uniformly controlled a.e. on $\text{Log}_{\mathcal{X} }^{-1}(W_\delta)$, so
\[
\int_{ \text{Log}_{\mathcal{X} }^{-1}(W_\delta) } d(\psi_t-\phi_{CY,t})\wedge d^c(\psi_t-\phi_{CY,t}) \wedge (dd^c\phi_0\circ\text{Log}_{\mathcal{X} })^{n-1} \leq \frac{C\delta}{ |\log |t||^n} . 
\]
\end{proof}

An outline of this strategy is
\begin{itemize}
\item Choose some suitable integral normalisation on $\phi_{CY,t}$. Apply Poincar\'e inequality to prove the average $L^2$-integral of $\psi_t-\phi_{CY,t}$ is small in the generic region $W_\delta$, which occupies most of the $d\mu_t$-measure.

\item
Deduce the measure is small on the set where $\phi_{CY,t}-\psi_t$ is perceptibly negative.

\item
Apply the stability estimate Cor. \ref{StabilityestimateKolodziej} to show $\phi_{CY,t}-\psi_t$ cannot be perceptibly negative. Since $\norm{\psi_t}_{C^0}$ is negligible, it shows the minimum of $\phi_{CY,t}$ is almost zero.

\item
Using the small $L^2$-average bound on $\phi_{CY,t}$, one applies the mean value inequality to derive a small upper bound on $\phi_{CY,t}$ in the generic region $W_\delta$.

\end{itemize}

The problem lies in the fact that the $n$-dimensional open faces $\text{Int}(\Delta_J)$ of $Sk(X)$ are disconnected, so the average values on each face are a priori unrelated. Thus the Poincar\'e inequality can only imply a small bound on $\phi_{CY,t}-\psi_t$ with a priori different normalisations associated to each open face, which is not good enough to get a small bound on average $L^2$-integral of $\psi_t-\phi_{CY,t}$.







\subsection{Metric convergence and SYZ fibration}

Given the $C^0_{loc}$-convergence estimate Theorem \ref{C0convergence}, then the metric SYZ conjecture would follow as explained in \cite{LiFermat}. The most important step is the following $C^\infty_{loc}$-convergence result in the generic region. Recall the exhaustion $W_\delta$ for the regular locus of the real MA solution $\phi_0$. The open subsets 
$\text{Log}_{\mathcal{X} }^{-1}(W_\delta)$ occupy almost the full percentage of the $d\mu_t$-measure on $X_t$ for small $\delta, t$, and as such deserve the name `generic region'.

\begin{thm}
(Metric $C^\infty_{loc}$-convergence in the generic region) For any given $0<\delta\ll 1$, then as $t\to 0$,
\[
\norm{ \phi_{CY,J,t} - \phi_0\circ \text{Log}_{\mathcal{X} } }_{C^k( \text{Log}_{\mathcal{X} }^{-1}(W_\delta)  )} \to 0,
\]
where the $C^k$-norm is defined by passing to the local universal cover of $\text{Log}_{\mathcal{X} }^{-1}(W_\delta)\subset (\C^*)^n$ with preferred coordinates $\zeta_i=\frac{1}{\log |t|} \log z_i$ for $i=1,2,\ldots, n$.
\end{thm}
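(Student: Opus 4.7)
The plan is to upgrade the $C^0_{loc}$-convergence of Theorem \ref{C0convergence} to $C^k_{loc}$-convergence by means of Savin's small perturbation theorem (Theorem \ref{Savin}), working in the preferred coordinates $\zeta_i = \frac{1}{\log|t|}\log z_i$ on the local universal cover of $\text{Log}_{\mathcal{X}}^{-1}(W_\delta)\subset (\C^*)^n$. In these coordinates, both $\phi_{CY,J,t}$ and $\phi_0\circ\text{Log}_{\mathcal{X}}$ are uniformly bounded on any fixed polydisc $P$, the complex Monge--Amp\`ere equation becomes a standard uniformly elliptic PDE on a fixed open set in $\C^n$, and the reference potential $\phi_0\circ\text{Log}_{\mathcal{X}}$ (as a function of $\zeta$ through $\text{Re}(\zeta)$) is $C^\infty$ with $C^k$-bounds depending only on $\delta$, by the regularity theory surveyed in section \ref{RegularitytheoryforrealMA}.

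First I would rewrite the Calabi--Yau equation $(dd^c\phi_{CY,J,t})^n = \omega_{CY,t}^n = \frac{(L^n)}{|\log|t||^n}\,d\mu_t$ in $\zeta$-coordinates. The asymptotic expression for $d\mu_t$ on $E_J^0$ from section \ref{volumeasymptoteessentialskeleton} reads $d\mu_t \sim C_0^{-1} |u_J|^2 |\log|t||^{-n}\prod\sqrt{-1}\,d\log z_i\wedge d\log\bar z_i$ for some normalising constant $C_0$, and $\prod\sqrt{-1}\,d\log z_i\wedge d\log\bar z_i = |\log|t||^{2n}\prod\sqrt{-1}\,d\zeta_i\wedge d\bar\zeta_i$, so all powers of $|\log|t||$ cancel and the equation takes the form $(dd^c\phi_{CY,J,t})^n = g_t\,\prod\sqrt{-1}\,d\zeta_i\wedge d\bar\zeta_i$ on $P$, with $g_t$ smooth, positive, uniformly bounded, and converging smoothly as $t\to 0$ to a limit $g_0$ determined by the Poincar\'e residue $|u_J(E_J)|^2$. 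A parallel computation using $\partial^2(\phi_0\circ\text{Log}_{\mathcal{X}})/\partial\zeta_i\partial\bar\zeta_j = \tfrac14\,\partial^2\phi_0/\partial x_i\partial x_j$ together with the real MA equation (\ref{realMACY}) shows $(dd^c\phi_0\circ\text{Log}_{\mathcal{X}})^n = g_0\,\prod\sqrt{-1}\,d\zeta_i\wedge d\bar\zeta_i$ on $P$ for the \emph{same} constant $g_0$.

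Next I would apply Theorem \ref{Savin} to the difference $u_t := \phi_{CY,J,t} - \phi_0\circ\text{Log}_{\mathcal{X}}$ on $P$. A linear change of $\zeta$-variables normalises the leading equation to the model form $(dd^cv)^n=1$, possible since $\phi_0\circ\text{Log}_{\mathcal{X}}$ is smooth strictly plurisubharmonic with $g_0>0$ a fixed positive constant. Then $\|u_t\|_{C^0(P)}\to 0$ from Theorem \ref{C0convergence} and $\|g_t-g_0\|_{C^{k-2,\gamma}(P)}\to 0$ from the previous paragraph jointly verify the hypotheses of Savin's theorem, yielding $\|u_t\|_{C^{k,\gamma}(P/2)}\to 0$ for every fixed $k$. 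A finite cover of $\text{Log}_{\mathcal{X}}^{-1}(W_\delta)$ by such polydiscs in the universal cover completes the argument.

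I expect the main obstacle to be bookkeeping rather than analytic depth: one must verify that in the $\zeta$-chart the holomorphic factors $u_J$ and the defining equations of the divisor $E_J$ give rise to families depending \emph{smoothly} on $t$, so that $g_t\to g_0$ smoothly and not merely in $C^0$, which amounts to checking that the local expansion of $\Omega_t$ in section \ref{volumeasymptoteessentialskeleton} can be differentiated termwise in the rescaled coordinates. A secondary nuisance is that Theorem \ref{Savin} is stated only for the model equation $(dd^cv)^n=1$, whereas the natural setting here is variable smooth RHS; one could invoke a standard variable-coefficient extension of Savin's result, or fall back on a Schauder bootstrap off the linearisation of the complex MA operator at $\phi_0\circ\text{Log}_{\mathcal{X}}$ once a $C^{2,\gamma}$-bound on $u_t$ is in hand.
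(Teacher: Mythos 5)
Your proposal is correct and follows essentially the same route as the paper: rewrite both Monge--Amp\`ere measures in the rescaled coordinates $\zeta_i$, observe that their ratio is $1+f$ with $f$ small in $C^{k-2,\gamma}$ (the paper quantifies this as exponentially small in $|\log|t||$, coming from the Taylor expansion of the holomorphic factor $u_J$ on the region where the $x_i$ are bounded away from the boundary), and feed the $C^0$-estimate of Theorem \ref{C0convergence} into Savin's theorem on balls of definite $\zeta$-size, finally covering $W_\delta$. Your ``secondary nuisance'' about Savin's theorem handling only the model equation is unfounded: Theorem \ref{Savin} as stated already allows a right-hand side $1+f$ with small $C^{k-2,\gamma}$-norm, and the reference solution $\phi_0\circ\text{Log}_{\mathcal{X}}$ genuinely solves the constant-RHS equation because $\det(D^2\phi_0)$ is constant by the real MA equation (\ref{realMACY}).
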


\begin{proof}
By the calculations in the proof of Lemma \ref{regularisationlemma},
\[
(dd^c\phi_0\circ \text{Log}_{\mathcal{X} } )^n= n!\det(D^2\phi_0) \prod_i \frac{1}{4\pi} \sqrt{-1} d\zeta_i\wedge d\bar{\zeta}_i ,
\]
while the CY condition gives (\cf section \ref{volumeasymptoteessentialskeleton})
\[
\begin{split}
&(dd^c \phi_{CY,J,t})^n = \omega_{CY,t}^n= \frac{ (L^n) }{ |\log |t||^n } d\mu_t=  \frac{ (L^n) }{ |\log |t||^n \int_{X_t} \Omega_t\wedge \overline{\Omega}_t } \Omega_t\wedge \overline{\Omega}_t
\\
=& \frac{ (L^n) |\log |t||^n }{  \int_{X_t} \sqrt{-1}^{n^2} \Omega_t\wedge \overline{\Omega}_t } |u_J|^2  \prod_i  \sqrt{-1} d\zeta_i\wedge d\bar{\zeta}_i,
\end{split}
\]
where $u_J$ is a holomorphic function of the defining functions $z_0, \ldots z_n$ of the divisors $E_i$, with limiting value $u_J(E_J)\neq 0$. The two expressions are matched by the condition that $\text{Log}_{\mathcal{X}* }d\mu_t$ converge to $d\mu_0$ as $t\to 0$, which boils down to
\[
(dd^c \phi_{CY,J,t})^n =n!\det(D^2\phi_0) \frac{ |u_J|^2}{|u_J(E_J)|^2 } \prod_i \frac{1}{4\pi} \sqrt{-1} d\zeta_i\wedge d\bar{\zeta}_i.
\]
Since $u_J$ has a Taylor expansion in $z_0,\ldots z_n$, we see that $\frac{ |u_J|^2}{|u_J(E_J)|^2 }=1+f$ for some smooth function $f$ in $\zeta_1,\ldots, \zeta_n$ with exponentially small $C^k$-norm bound
\[
\norm{ f}_{ C^k ( \text{Log}_{\mathcal{X} }^{-1}(W_\delta)  )  } \lesssim_k \exp(-c(W_\delta) |\log |t|| )
\]
for some exponent $c(W_\delta)>0$ depending on $W_\delta$.

We focus on balls in the local universal cover of $ \text{Log}_{\mathcal{X} }^{-1}(W_\delta)$ with definite size in the $\zeta_i$ coordinates. For sufficiently small $t$, then the volume relative error $f$ has arbitrarily small $C^k$-norm bound, and Theorem \ref{C0convergence} says the $C^0$-norm of $\phi_{CY,J,t} - \phi_0\circ \text{Log}_{\mathcal{X} }$ on the ball is also arbitrarily small. Thus we can apply Savin's theorem \ref{Savin}, to deduce that $\norm{ \phi_{CY,J,t} - \phi_0\circ \text{Log}_{\mathcal{X} } }_{C^k}$ is arbitrarily small on shrinked balls. Since $ W_\delta$ for varying $\delta$ give an exhaustion of the regular locus of $\phi_0$, this shrinking can be compensated by starting with a larger $W_\delta$, and we deduce the $C^k$-convergence estimate as required.
\end{proof}

The geometric meaning is that inside the generic region, the CY metric $\omega_{CY,t}$ is $C^\infty$-close to a \emph{semiflat metric}:
\begin{equation*}
\omega_{CY,t}\sim  dd^c \phi_0\circ \text{Log}_{\mathcal{X} }= \frac{1}{4\pi|\log |t||^2}  \sum_{1\leq i,j\leq n} \frac{\partial^2 \phi_0}{\partial x_i\partial x_j} \sqrt{-1} d\log z_i\wedge d\log \bar{z}_j .
\end{equation*}
In terms of the Riemannian metric tensors,
\begin{equation}\label{CYmetricasymptote}
g_{CY,t}\sim \frac{1}{2\pi|\log |t||^2} \text{Re}\{  \sum_{1\leq i,j\leq n} \frac{\partial^2 \phi_0}{\partial x_i\partial x_j}  d\log z_i\otimes d\log \bar{z}_j \}.
\end{equation}
The name `semiflat' means the metric restricted to the $T^n$-fibres are flat Euclidean. The $T^n$-fibres are precisely special Lagrangian in the model case
\[
\begin{cases}
\omega_{semiflat}= \frac{1}{4\pi|\log |t||^2}  \sum_{1\leq i,j\leq n} \frac{\partial^2 \phi_0}{\partial x_i\partial x_j} \sqrt{-1} d\log z_i\wedge d\log \bar{z}_j ,
\\
\Omega_{semiflat}= \text{const}\cdot \prod_1^n d\log z_i,
\end{cases}
\]
or equivalently \[
\text{Log}_{\mathcal{X}}: (z_1,\ldots z_n)\mapsto \frac{1}{\log |t| } (\log |z_1|,\ldots \log |z_n|   )
\]
is a special Lagrangian fibration in the model case for some choice of the phase angle. Since $(\omega_{CY,t},\Omega_t)$ is $C^\infty$-close to the model case,  standard perturbation theory allows one to perturb the $T^n$-fibres into special Lagrangians with respect to $(\omega_{CY,t},\Omega_t)$ in the generic region, to obtain a new special Lagrangian fibration. The details are carried out in \cite{Zhang}, and more expositions can be found in \cite{LiFermat}.

\begin{thm}
(Special Lagrangian fibration on the generic region)
For any given $0<\delta\ll 1$, then for $t$ sufficiently small depending on $\delta$, there is a special Lagrangian fibration on an open subset of $(X_t,\omega_{CY,t},\Omega_t)$ containing $W_\delta$.
\end{thm}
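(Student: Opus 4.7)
The plan is to upgrade the $C^\infty_{loc}$ potential convergence into a statement about special Lagrangian fibrations via a perturbation argument, following the strategy of \cite{Zhang} and \cite{LiFermat}. First I would fix $\delta$ and pick a slightly larger $\delta' < \delta$ so that $W_\delta \Subset W_{\delta'}$, and work over $\text{Log}_{\mathcal{X}}^{-1}(W_{\delta'})$ in the local $\zeta_i = \frac{1}{\log |t|}\log z_i$ coordinates on the universal cover. On each connected component lying above an $n$-dimensional face, the previous theorem gives $\norm{\phi_{CY,J,t} - \phi_0\circ\text{Log}_{\mathcal{X}}}_{C^k} \to 0$ as $t \to 0$, so in particular the K\"ahler form $\omega_{CY,t}$ is $C^{k-2}$-close to the semiflat model $dd^c(\phi_0\circ \text{Log}_{\mathcal{X}})$, and $\Omega_t$ is exponentially close to the model holomorphic volume form $u_J(E_J)\prod_1^n d\log z_i$ on the same region.

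Next I would observe that for the semiflat model $(\omega_{\mathrm{semiflat}}, \Omega_{\mathrm{semiflat}})$, the map $\text{Log}_{\mathcal{X}}$ is a genuine special Lagrangian $T^n$-fibration with respect to an appropriate constant phase angle, because the fibres are real $n$-tori on which $d\log z_i$ restrict to purely imaginary forms, making $\text{Im}(e^{-i\theta}\Omega_{\mathrm{semiflat}})|_{\mathrm{fibre}} = 0$, while $\omega_{\mathrm{semiflat}}|_{\mathrm{fibre}} = 0$ follows from the fact that the restrictions of $d\log z_i \wedge d\log \bar z_j$ to the $T^n$-orbits vanish. The $T^n$-fibres are transversal to the section given by fixing the arguments of $z_i$, so the linearization of the special Lagrangian condition along each fibre is an invertible self-adjoint operator (the fibrewise Hodge-theoretic model is a flat torus, on which this is immediate).

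Now I would invoke the implicit function theorem for the special Lagrangian equation, as carried out in \cite{Zhang}. Because $(\omega_{CY,t}, \Omega_t)$ is $C^k$-close to the semiflat model over $\text{Log}_{\mathcal{X}}^{-1}(W_{\delta'})$ for $t$ small, and because the linearized operator on each model $T^n$-fibre has a uniform lower bound on its smallest nonzero eigenvalue in the relevant weighted norm (thanks to the integral affine structure bounds and $C^2$-bound of $\phi_0$ on $\overline{W_{\delta'}}\setminus \Sigma$), a standard quantitative inverse function theorem deforms each model torus above a point of $W_\delta$ into a true special Lagrangian torus for $(\omega_{CY,t}, \Omega_t)$, with the perturbation size controlled by the closeness of the metrics. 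These perturbed tori vary smoothly with the base point and foliate an open neighbourhood of $\text{Log}_{\mathcal{X}}^{-1}(W_\delta)$, giving the desired special Lagrangian $T^n$-fibration.

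The main obstacle is the uniformity of the perturbation estimate: as $t \to 0$ the tori shrink (of diameter $\sim 1/|\log |t||$ in the $g_{CY,t}$ metric by \eqref{CYmetricasymptote}), so one must carefully track the scaling of the linearized operator and the $C^k$-smallness of the error terms to ensure the implicit function theorem applies uniformly across all fibres over $W_\delta$. This is exactly the analysis handled in \cite{Zhang} and adapted in \cite{LiFermat}, and the $C^\infty_{loc}$-convergence established in the previous theorem is precisely the input those perturbation arguments require, so citing them closes the proof.
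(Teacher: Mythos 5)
Your proposal follows the paper's own route exactly: establish that $(\omega_{CY,t},\Omega_t)$ is $C^\infty$-close over $\text{Log}_{\mathcal{X}}^{-1}(W_\delta)$ to the semiflat model, for which $\text{Log}_{\mathcal{X}}$ is a special Lagrangian $T^n$-fibration, and then invoke the uniform perturbation/implicit function theorem analysis of \cite{Zhang}, as adapted in \cite{LiFermat}, to deform the model tori. One small correction: the individual forms $d\log z_i\wedge d\log \bar{z}_j$ do \emph{not} restrict to zero on the $T^n$-orbits when $i\neq j$ (they restrict to $d\arg z_i\wedge d\arg z_j$); the vanishing of $\omega_{\mathrm{semiflat}}$ on the fibres instead comes from pairing the symmetric Hessian $\partial^2\phi_0/\partial x_i\partial x_j$ against these antisymmetric two-forms.
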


Consequently, assuming as always the comparison property between NA MA equation and real MA equation, then the special Lagrangian fibration exists on an open subset of arbitrarily large percentage of $X_t$ as $t\to 0$, which is the main theorem of the paper.

Finally we make a few comments about the status of the \emph{Kontsevich-Soibelman/Gross-Wilson conjecture}, which says that given a polarised algebraic  maximally degenerate family of CY manifolds, whose holonomy groups are exactly $SU(n)$, the Gromov-Hausdorff limit of the CY metrics $g_{CY,t}$ is the essential skeleton $Sk(X)$ equipped with a real Monge-Amp\`ere metric on the regular locus, the singular locus has real codimension 2, and $Sk(X)$ is homeomorphic to $S^n$.

What follows quickly from the metric asymptote (\ref{CYmetricasymptote}) and \cite{LiFermat} are the following facts, assuming the comparison property:

\begin{itemize}
\item  Over the regular locus of $\phi_0$ inside each $n$-dimensional open faces $\text{Int}(\Delta_J)$, the metrics $g_{CY,t}$ converge in the Gromov-Hausdorff sense to a real MA metric as $t\to 0$:
\[
g_{CY,t}\to \frac{1}{2\pi}  \sum_{1\leq i,j\leq n} \frac{\partial^2 \phi_0}{\partial x_i\partial x_j}  dx_i\otimes dx_j .
\] 
This is immediate from the much stronger $C^\infty_{loc}$ metric asymptote (\ref{CYmetricasymptote}).

\item  There is a uniform diameter bound $\text{diam}(X_t, g_{CY,t})\leq C$ \cite[Prop. 5.11]{LiFermat}\cite{LiTosatti}.

\item  Any point in $X_t\setminus W_\delta$ is within $C\delta^{1/2n}$-distance to a point on $W_\delta$ for sufficiently small $t$. This follows from the Bishop-Gromov comparison argument in \cite[section 5.3]{LiFermat}.

\item Consequently, the regular locus $W_0$ of $\phi_0$ inside the union of $n$-dimensional open faces of $Sk(X)$, is an open dense subset of any Gromov-Hausdorff limit space of $(X_t, g_{CY,t})$. 

\end{itemize}

\begin{rmk}
Notice there is a gap between the above results and the Gromov-Hausdorff convergence to the real MA metric on $Sk(X)$ defined by the Hessian of $\phi_0$, because the $n$-dimensional open faces are disconnected, and therefore we cannot access the distance of two points on different faces. One needs further information on the $(n-1)$-dimensional faces of $Sk(X)$.
\end{rmk}

What remains to be resolved are the following questions, which seem to contain substantial difficulty:

\begin{itemize}
\item 	Prove the comparison property.

\item  Formulate a global notion of convex functions and the real MA equation on $Sk(X)$, instead of just on the $n$-dimensional open faces. Notice this is nontrivial because $Sk(X)$ only has a piecewise affine structure, not a global affine structure. See section \ref{ConjecturalmeaningNAMAII} for some closely related discussions.

\item  Develop a regularity theory for such real MA metrics, and prove/disprove that the singular locus has real codimension at least two.  Notice this is false for real MA equations on the unit ball by a counterexample of Mooney \cite{Mooney}, so if it is true then there has to be a global reason.

\item The regularity theory should also show that $Sk(X)$ equipped with the real MA metric has the same topology as $Sk(X)$ viewed as a simplicial complex. This is nontrivial because a priori the real MA equation can have singularities which contract lines to points, and the singular set may even be quite fractal, such as in Mooney's example.

\item Prove an enhanced version of the comparison property between NA MA equation and real MA equation, which works globally on all faces of $Sk(X)$, not just on the $n$-dimensional open faces.

\item Extend the arguments in this paper over the global regular locus of $\phi_0$, to show that the CY metrics converge smoothly there as well. Use this to identify the Gromov-Hausdorff limit of $(X_t,g_{CY,t})$ with $Sk(X)$ equipped with the real MA metric defined by the Hessian of $\phi_0$.

\item  Show that $Sk(X)$ with the standard topology is homeomorphic to $S^n$. This question does not refer to the metric, and is  much studied in birational geometry \cite{NicaiseXu}\cite{NicaiseXuYu}. This can be checked explicitly for many examples. In general, it is known that $Sk(X)$ is a `pseudomanifold', its $\Q$-homology groups agree with $S^n$, and its fundamental group has trivial profinite completion, but the actual homeomorphism type is still elusive. 
\end{itemize}

\section{Further directions}\label{Furtherdirections}

Stepping outside of the main setting of this paper, we will mention some further problems, and make a few non-rigorous speculations. In particular, we will use K\"ahler geometric intuition to guess a formula for the NA measure over the lower dimensional faces of $\Delta_{\mathcal{X}}$ in terms of differential operators. We then suggest a possible link between the NA MA equation and degeneration of CY metrics, in \emph{non-maximal degeneration} settings, by proposing a generalized Calabi ansatz, which unifies the Calabi ansatz and the semiflat metric.

\subsection{Transcendental case}

 While this paper focuses on the algebraic case, one may wonder what happens for `transcendental families'. A prototypical examples is the family of degree $n+2$  hypersurfaces in $\mathbb{CP}^{n+1}$:
\[
X_t= \{    \sum_I a_I t^{\lambda_I} x^I=0      \} \subset \mathbb{CP}^{n+1},
\]
where $x^I$ denote the degree $n+2$ monomials, $a_I$ are coefficients chosen suitably generically, and $\lambda_I$ are exponents chosen suitably. When $\lambda_I$ are sufficiently irrational, this does not fit into our framework, yet the metric SYZ conjecture makes sense.

There seem to be two natural strategies. One is to make the NA pluripotential theory work over NA fields without a discrete valuation (\cf \cite{Boucksomnew2} for the latest progress), and the other is to develop the framework of real MA equation on polyhedral sets such as $Sk(X)$ without explicit reference to NA geometry.

\subsection{Conjectural meaning of the NA MA measure I}\label{ConjecturalmeaningofNAMA}

Let $(X,L)$ be an algebraic degeneration family with an ample polarization, and $(\mathcal{X},\mathcal{L})$ be a semistable snc model. This is not required to be CY, nor do we impose any maximal degeneration condition. It induces a formal model by base change. Consider the metric $\norm{\cdot}=\norm{\cdot}_{\mathcal{L}}e^{-\phi}$ on $(X_K,L)$, where we assume $\phi=\phi\circ r_{\mathcal{X}}$
and  $\phi$ is `sufficiently smooth' on $\Delta_\mathcal{X}$. Our plan is to use the heuristic logic at the end of section \ref{NAMAmeasure} to guess a formula for the NA measure over the interior of any face $\Delta_J\subset \Delta_{\mathcal{X}}$, in terms of \emph{differential operators}. The answer will involve an interesting correction factor to the real MA measure.

Let  $\Delta_J$ correspond to $E_J=\cap_{i\in J} E_i$ as usual. We first explain how to associate a class in $H^{1,1}(E_J)$  to each $x\in \text{Int}(\Delta_J)$. Let $x_i$ be the local affine coordinate corresponding to $E_i$ for any $i\in I$; we will only need $E_i\cap E_J\neq \emptyset$. We introduce an overparametrisation: let $u$ be a function of all $\{ x_i \}_{i\in I}$, such that $u$ agrees with $\phi$ at least on a neighbourhood of $\text{Int}(\Delta_J)\subset \Delta_{\mathcal{X}}$, and we assume $u$ is smooth. Here $x_i$ are treated as independent variables for $u$, even though on $\Delta_{\mathcal{X}}$ they satisfy various linear constraints.
 Consider the class in $H^{1,1}(E_J)$ defined by the affine linear combination of derivatives:
 \begin{equation}\label{H11class}
 \mathcal{D}_J(x, \norm{\cdot} )=c_1(\mathcal{L})-\sum_I \frac{\partial u}{\partial x_i} c_1(\mathcal{O}(E_i) ) 
 \end{equation}
 which depends only on $\phi$ because in $H^{1,1}(E_J)$
 \[
  c_1( \mathcal{O}(\sum_I E_i)) =c_1(\text{div}(dt))=0, \quad c_1(\mathcal{O}(E_i))=0 \quad \forall E_i\cap E_J=\emptyset.
 \]
Notice $\norm{\cdot}=\norm{\cdot}_{\mathcal{L} }e^{-\phi}$ and $\mathcal{D}_J(x,\norm{\cdot})$ are invariant under the change
\[
\mathcal{L}\to \mathcal{L}+\sum_I d_i E_i, \quad \phi\to \phi+ \sum_I d_i \phi_{E_i}
\]
where $d_i\in \R$ and $\phi_{E_i}$ denote the model functions associated to $E_i$. Thus the function $\mathcal{D}_J(x, \norm{\cdot} )$ on $\text{Int}(\Delta_J)$ is intrinsically associated to $\norm{\cdot}$.

Our strategy is to consider a family of Hermitian metrics $h_t$ on $L\to X_t$ such that $h_t^{ 1/|\log |t|| }\to \norm{\cdot}^2$ in the hybrid topology on $X\sqcup \Delta_{\mathcal{X}}$, and take the limit of the complex MA measures associated to the curvature forms of $h_t$. Introduce smooth Hermitian metrics $h_{\mathcal{L} }$ and $h_{E_i}$ on $\mathcal{L}$ and $\mathcal{O}(E_i)$ for $i\in I$. Use these to produce smooth functions $r_i$ on $\mathcal{X}$ for $i\in I$, such that near $E_i\subset \mathcal{X}$ the singularity is governed by $r_i\sim |z_i|e^{- \phi_i} $ for smooth local $\phi_i$, where $z_i$ are local defining equations for $E_i$, and away from $E_i$ the function $r_i$ is smooth and bounded positively from below.  In order for $h_t$ to have the appropriate convergence behaviour as $t\to 0$ around $\text{Int}(\Delta_J)\subset X\sqcup\Delta_{\mathcal{X}}$, we use the ansatz
\[
h_t\sim h_{ \mathcal{L} }\exp\left( 2\log |t| u(   \frac{\log r_i}{ \log |t|}  , i\in I    ) \right)
\]
where $r_i$ are regarded as smooth functions on $X_t$. The curvature form in $(X_t, c_1(L))$ is
\[
- dd^c\log h_t^{1/2} \sim -dd^c \log h_{\mathcal{L}}^{1/2}-  \log |t| dd^c u,
\]
\[
dd^c u= \sum_{i,j\in I} \frac{\partial^2 u}{\partial x_i\partial x_j} \frac{1}{|\log |t||^2}d\log r_i\wedge d^c \log r_j+ \frac{1}{\log |t|}\sum_I \frac{\partial u}{\partial x_i} dd^c \log r_i.
\]
Our goal is to extract the limiting contribution of $(-dd^c \log h_t^{1/2})^n$ to $\text{Int}(\Delta_J)$  as $t\to 0$. We need to separate this contribution from $\Delta_{J'}$ with $J'\supsetneq J$; algebraically this means taking the measure contribution near $E_J\subset \mathcal{X}$, but away from deeper strata $E_{J'}$. To formalize this, we consider the quantitative statum on $X_t$ (\cf section \ref{volumeasymptoteessentialskeleton})
\[
E_{J,t}^\epsilon= \{ q\in X_t| d(q, E_J)<\epsilon      \}\setminus \bigcup_{J'\supsetneq J} \{  q\in X_t| d(q, E_{J'}) <\epsilon \}, \quad \epsilon \ll 1.
\]
We need to compute
\[
\lim_{\epsilon\to 0} \lim_{t\to 0} \text{Log}_{\mathcal{X}* } \left( (-dd^c \log h_t^{1/2})^n \mres E_{J,t}^\epsilon \right)
\]
where the order of limit is important.

For fixed $\epsilon$, after deleting terms suppressed by order $O( \frac{1}{|\log |t||} )$, we have the asymptote on $E_{J,t}^\epsilon\subset X_t$ as $t\to 0$:
\[
dd^c u\sim \sum_0^p \frac{\partial^2 u}{\partial x_i\partial x_j} \frac{1}{|\log |t||^2} \frac{ \sqrt{-1}}{4\pi} d\log z_i\wedge d \log \bar{z}_j + \frac{1}{\log |t|}\sum_I \frac{\partial u}{\partial x_i} dd^c \log r_i,
\]
where $z_0,\ldots z_p$ are the local defining functions of the divisors $E_i\subset \mathcal{X}$, and $E_J=\cap_0^p E_i$. Using that $t=z_0\ldots z_p$ locally around $E_J$, we can eliminate the $ z_0$ variable to write
\[
dd^c u\sim \sum_1^p \frac{\partial^2 \phi}{\partial x_i\partial x_j} \frac{1}{|\log |t||^2} \frac{ \sqrt{-1}}{4\pi} d\log z_i\wedge d \log \bar{z}_j + \frac{1}{\log |t|}\sum_I \frac{\partial u}{\partial x_i} dd^c \log r_i,
\]
hence 
\begin{equation}\label{curvatureformasymptote}
	\begin{split}
- dd^c\log h_t^{1/2}\sim &\sum_1^p \frac{\partial^2 \phi}{\partial x_i\partial x_j} \frac{1}{|\log |t||} \frac{ \sqrt{-1}}{4\pi} d\log z_i\wedge d \log \bar{z}_j
\\
&-dd^c \log h_{\mathcal{L}}^{1/2} -\sum_I \frac{\partial u}{\partial x_i} dd^c \log r_i.	
	\end{split}
\end{equation}
Notice that $dd^c\log r_i\sim -dd^c \phi_i$ has a smooth extension to the central fibre as $t\to 0$; the singular effect is eliminated by $dd^c\log |z_i|=0$ on $X_t$. The term $\sum_1^p \frac{\partial^2 \phi}{\partial x_i\partial x_j} \frac{1}{|\log |t||} \frac{ \sqrt{-1}}{4\pi} d\log z_i\wedge d \log \bar{z}_j$ dominates in the directions transverse to $E_J$, and the term $-dd^c \log h_{\mathcal{L}}^{1/2} +\sum_I \frac{\partial u}{\partial x_i} dd^c \phi_i$ dominates in the directions tangential to $E_J$.

 The measure asymptote is now
\begin{equation}\label{measureasymptotegeneralisedCY}
\begin{split}
(-dd^c \log h_t^{1/2})^n \sim & p! \det( D^2\phi ) \left(  -dd^c \log h_{\mathcal{L}}^{1/2} +\sum_I \frac{\partial u}{\partial x_i} dd^c \phi_i    \right)^{n-p} \wedge \\
& \prod_{i=1}^p \frac{1}{2\pi|\log |t||}  d\log |z_i|\wedge d \arg{z}_i,
\end{split}
\end{equation} 
where $D^2\phi=( \frac{\partial^2 \phi}{\partial x_i\partial x_j}  )_{p\times p}$ is the Hessian matrix of $\phi$.
Thus the pushforward measure has the limit as $t\to 0$:
\[
\begin{split}
 \lim_{t\to 0} \text{Log}_{\mathcal{X}* } \left( (-dd^c \log h_t^{1/2})^n \mres E_{J,t}^\epsilon \right)= p!\det(D^2\phi)|dx_1\ldots dx_p| 
 \\
\times\int_{E_{J,0}^\epsilon}  \left(  -dd^c \log h_{\mathcal{L}}^{1/2} +\sum_I \frac{\partial u}{\partial x_i} dd^c \phi_i    \right)^{n-p} .
 \end{split}
\]
Taking $\epsilon\to 0$,
\[
\begin{split}
\lim_{\epsilon\to 0}\lim_{t\to 0} \text{Log}_{\mathcal{X}* } \left( (-dd^c \log h_t^{1/2})^n \mres E_{J,t}^\epsilon \right)= p!\det(D^2\phi)|dx_1\ldots dx_p| 
\\
\times\int_{E_J}  \left(  -dd^c \log h_{\mathcal{L}}^{1/2} +\sum_I \frac{\partial u}{\partial x_i} dd^c \phi_i    \right)^{n-p} .
\end{split}
\]
Here an interesting topological effect takes place. Even though $dd^c\log r_i$ starts life as an exact form on $X_t$, it acquires a first Chern class in the process of smooth extension to the central fibre $\mathcal{X}_0$, because we are removing the distributional contribution $dd^c\log |z_i|=[E_i]$. Thus on $E_J$, the smooth closed (1,1)-form
\[
-dd^c \log h_{\mathcal{L}}^{1/2} +\sum_I \frac{\partial u}{\partial x_i} dd^c \phi_i 
\]
lies in the $H^{1,1}$ class
\[
c_1(\mathcal{L}) - \sum_I \frac{\partial u}{\partial x_i} c_1(\mathcal{O}(E_i)) ,
\]
which is exactly the class $\mathcal{D}_J(x,\norm{\cdot})$ we introduced earlier (\cf (\ref{H11class})). We have thus obtained a formula for the double limit:
\[
 p!\det(D^2\phi)|dx_1\ldots dx_p| (\mathcal{D}_J(x,\norm{\cdot})^{n-p}\cdot E_J).
\]
Notice all auxiliary choices are eliminated at this stage.
According to our heuristic logic that the NA MA measure should be the limit of the corresponding complex MA measures on $X_t$, we conclude the \emph{heuristic formula for the NA MA measure} over $\text{Int}(\Delta_J)$
\begin{equation}\label{NAmeasureformula}
r_{\mathcal{X}*} MA( \norm{\cdot} )= p!\det(D^2\phi)|dx_1\ldots dx_p| (\mathcal{D}_J(x,\norm{\cdot})^{n-p}\cdot E_J).
\end{equation}
Notice the RHS is a differential operator in the potential $\phi$, because the intersection theoretic term 
$\mathcal{D}_J(x,\norm{\cdot})$ is affine linear in the first order derivatives of $\phi$. The formula exhibits a curious mixture of intersection theory with real MA operator.

\begin{rmk}\label{semipositivityconvexity}
(Semipositivity, convexity, nefness)
It is tempting to characterize the semipositivity condition on $\norm{\cdot}$, in terms of differential conditions on $\Delta_{\mathcal{X} }$, just like convex functions are characterised by the positivity of its Hessian matrix. We speculate that semipositivity should imply that for suitable choices of $h_\mathcal{L}$ and $r_I$, the curvature form $-dd^c \log h_t^{1/2}$ can be made positive up to small errors. In the $t\to 0$ limit, the formula (\ref{curvatureformasymptote}) then suggests that on each open face $\text{Int}(\Delta_J)$,
\begin{itemize}
\item The Hessian $D^2\phi\geq 0$, namely $\phi$ is convex;
\item The gradient satisfies that $\mathcal{D}_J(x,\norm{\cdot})\in H^{1,1}(E_J)$ lies in the nef cone. 
\end{itemize}
Do these two conditions completely characterize semipositive metrics with $\phi=\phi\circ r_{\mathcal{X}}$? If yes, it would naturally explain why (\ref{NAmeasureformula}) defines a measure, instead of just a signed measure.
\end{rmk}

\subsection{Conjectural meaning of NA MA II}\label{ConjecturalmeaningNAMAII}

In this section we will speculate on the concrete meaning of the Boucksom-Favre-Jonsson solution to the NA Calabi conjecture (\cf section \ref{NACalabi}). Let $(X,L)$ be a polarized algebraic degeneration family of Calabi-Yau manifolds, which needs not be a maximal degeneration. Let $(\mathcal{X},\mathcal{L})$ be a semistable snc model. An analogue of the Lebesgue measure $d\mu_0=\lim_{t\to 0}\text{Log}_{\mathcal{X}*}d\mu_t$ exists in this setting, which is supported on the essential skeleton $Sk(X)\subset \Delta_{\mathcal{X}}$. It can be found by the ideas outlined in section \ref{volumeasymptoteessentialskeleton}. To describe it, denote $m=\dim Sk(X)$, and $\Delta_J$ be any $m$-dimensional face of $Sk(X)$, corresponding to $E_J=\cap_0^m E_j$ with local defining functions $z_0,\ldots z_m$. The holomorphic volume form $\Omega$ on $\mathcal{X}$ induces a  holomorphic volume form on $E_J$, called its \emph{Poincar\'e residue} $\text{Res}_{E_J}(\Omega)$, determined by the equality in $K_{\mathcal{X}}|_{E_J}$:
\[
\Omega= \text{Res}_{E_J}(\Omega) \wedge td\log z_0\wedge \ldots d\log z_m.
\]
This is independent of the choice of local coordinates $z_i$. Then up to normalising $\Omega$ by a global multiplicative constant, on the interior of $\Delta_J$, 
\begin{equation}
d\mu_0= |dx_1\ldots dx_m| \cdot \int_{E_J} \sqrt{-1}^{(n-m)^2} \text{Res}_{E_J}(\Omega)\wedge \overline{\text{Res}_{E_J}(\Omega)} ,
\end{equation}
Notice it is proportional to $|dx_1\ldots dx_m|$, which explains the name `Lebesgue measure'.  The union of the $m$-dimensional open faces of $Sk(X)$ has the full $d\mu_0$-measure, which is one by our normalisation.

We are interested in the distinguished solution $\norm{\cdot}_{CY}=\norm{\cdot}_{\mathcal{L}}e^{-\phi_0}$ to the NA MA equation on $(X_K^{an},L)$ defined by
\begin{equation}
MA( \norm{\cdot}_{CY}  )=(L^n)d\mu_0.
\end{equation}
To give an interpretation, we boldly assume that $\phi_0=\phi_0\circ r_{\mathcal{X}}$, which can be regarded as a strong version of the comparison property. Then we are in the setting of section \ref{ConjecturalmeaningofNAMA}.

We begin with the interior of $m$-dimensional faces of $Sk(X)$, which can be viewed as the generic region. Comparing with the heuristic formula (\ref{NAmeasureformula}), and making regularity assumptions on $\phi_0$, we deduce a second order PDE
\begin{equation}\label{NAMAPDE}
\det(D^2\phi_0) (\mathcal{D}_J(x,\norm{\cdot}_{CY})^{n-m}\cdot E_J ) = \frac{(L^n)}{m!} \int_{E_J} \sqrt{-1}^{(n-m)^2} \text{Res}_{E_J}(\Omega)\wedge \overline{\text{Res}_{E_J}(\Omega)},
\end{equation}
where $(\mathcal{D}_J(x,\norm{\cdot}_{CY})^{n-m}\cdot E_J )$ defines a polynomial in the gradient of $\phi_0$.

On any other face $\text{Int}(\Delta_J)\subset \Delta_{\mathcal{X}}$ of dimension $p$, not necessarily on $Sk(X)$, 
\[
\det(D^2\phi_0) (\mathcal{D}_J(x,\norm{\cdot}_{CY})^{n-p}\cdot E_J )=0.
\]
There are two obvious mechanisms for this to happen:
\begin{itemize}
\item We may have the homogeneous real MA equation $\det(D^2\phi_0)=0$. There is a basic mechanism for this: notice that the model choice $\mathcal{X}$ is not canonical, and one can blow up to obtain higher models. For simple blowups, either one subdivides an existent face, or one creates new faces. On any new face $\det(D^2\phi_0)=0$ is automatic, because $\phi_0=\phi_0\circ r_{\mathcal{X}}$ implies that $\phi_0$ is independent of the new coordinate variable.

\item  We may have  $\mathcal{D}_J(x,\norm{\cdot}_{CY})^{n-p}\cdot E_J =0$, which is an algebraic condition on the gradient of $\phi_0$. The author speculates that this option happens for $(m-1)$-dimensional faces on $Sk(X)$, and its role is to match the gradients when we cross from one $m$-dimensional open face of $Sk(X)$ to another. In Remark \ref{semipositivityconvexity} we suggest $\mathcal{D}_J(x,\norm{\cdot}_{CY})$ may be a nef class on $E_J$, and our equation precisely says this class has zero volume. The birational geometric significance seems well worth investigating. We now discuss some of the simplest ways this mechanism could work:
\end{itemize}

\begin{eg}
When $n=m$, namely in the maximal degeneration case, for simplicity we consider an $(n-1)$-dimensional face $\Delta_J$ of $Sk(X)$, such that there are only two $n$-dimensional faces of $\Delta_{\mathcal{X}}$ containing $\Delta_J$, and they both lie on $Sk(X)$. Then the degree condition $\mathcal{D}_J(x,\norm{\cdot}_{CY})\cdot E_J =0$ imposes a matching condition on the gradient of $\phi_0$ across the $(n-1)$-dim face.

We consider a very concrete local example where  $\mathbb{P}^1\simeq E_J=\cap_1^{n}E_i$, 
\[
\deg \mathcal{O}(E_i)|_{E_J}=-d_i\leq 0, \quad i=1,2,\ldots n, \quad \sum_1^n d_i=2,
\]
and the two divisors $E_0, E_\infty$ intersect $E_J$ transversely at $0,\infty\in \mathbb{P}^1$, giving rise to the two $n$-dimensional faces. All divisors are reduced. The complex geometric local model for $\mathcal{X}$ comprises of two charts $\C^{n+1}_{z_0,\ldots z_n}$ and $\C^{n+1}_{w_0,\ldots, w_n}$. On the first chart $E_i=\{ z_i=0 \}$ for $i=0,1,\ldots n$, and $z_0$ is the affine coordinate on $\mathbb{P}^1\setminus \{\infty\}$.  On the second chart $E_i=\{ w_i=0\}$ for $i=1,\ldots n$, $E_\infty= \{ w_0=0 \}$, and $w_0$ is the affine coordinate on $\mathbb{P}^1\setminus \{0\}$. The transition on the overlap is
\[
w_0= z_0^{-1}, \quad w_i= z_i z_0^{d_i}, \quad i=1,2,\ldots n.
\]
The holomorphic volume form $\Omega\sim dz_0\wedge \ldots dz_n= -dw_0\wedge \ldots dw_n$, and the coordinate $t=z_0\ldots z_n=w_0\ldots w_n$. Locally on $X_t$
\[
\Omega_t\sim d\log z_1\wedge \ldots d\log z_n = -d\log w_1\wedge \ldots d\log w_n.
\]
On the two $n$-dimensional faces the coordinates are $x_i= \frac{\log |z_i|}{\log |t|}$ and $x_i'=\frac{\log |w_i|}{\log |t|}$ respectively, satisfying the linear constraints $\sum_0^n x_i=0$ and $\sum_0^n x_i'=0$, so we can eliminate $x_n, x_n'$. By adjusting $\mathcal{L}$ we can make it zero in the local model. The potential $\phi_0$ satisfies the real MA equation on the two $n$-dimensional faces:
\[
\det \left( \frac{\partial^2 \phi_0}{\partial x_i \partial x_j} \right)_{0\leq i,j\leq n-1}=\text{const}, \quad \det \left( \frac{\partial^2 \phi_0}{\partial x_i' \partial x_j'} \right)_{0\leq i,j\leq n-1}=\text{const},
\]
and the problem is to match them on $\Delta_J$. The complex geometry suggests that the domain of the coordinates $x_i, x_i'$ can be extended outside the original $n$-simplices, by the identificaton
\[
x_0'= -x_0, \quad x_i'= x_i + d_i x_0, \quad i=1,2,\ldots n-1,
\]
and the real MA equation is satisfied also on $\Delta_J$. In terms of gradients at $x\in \Delta_J$,
\[
\frac{\partial \phi_0}{\partial x_i'}= \frac{\partial \phi_0}{\partial x_i}, \quad i=1,\ldots n-1, \quad \frac{\partial \phi_0}{\partial x_0'}=- \frac{\partial \phi_0}{\partial x_0}+ \sum_1^{n-1} d_i \frac{\partial \phi_0}{\partial x_i}.
\]
The first $(n-1)$ conditions merely mean the tangent derivatives along $\Delta_J$ agree. The normal derivative matching condition precisely says
\[
\mathcal{D}_J (x,\norm{\cdot})\cdot E_J= \sum_1^{n-1} d_i \frac{\partial \phi_0}{\partial x_i}- \frac{\partial \phi_0}{\partial x_0}-  \frac{\partial \phi_0}{\partial x_0'}=0.
\]
From a different perspective, the zero degree condition explains why $\mathbb{P}^1$ can be invisible to the metric, so $\phi_0$ is allowed to remain smooth across $\text{Int}(\Delta_J)$.

\end{eg}

\begin{eg}
For $m= 1$, consider a $0$-dimensional face $\Delta_J$ of $Sk(X)$, such that there is a unique $1$-dimensional face  of $\Delta_{\mathcal{X}}$ containing $\Delta_J$, and it lies on $Sk(X)$. The simplest possibility $\mathcal{D}_J(x,\norm{\cdot}_{CY})=0$ imposes a  Neumann-like boundary condition on $\phi_0$.
This example may describe the Tian-Yau region in \cite{HSVZ}\cite{HSVZ2}.
\end{eg}

\begin{eg}
For $m=1$, consider a $0$-dimensional face $\Delta_J$ of $Sk(X)$, such that there are two $1$-dimensional faces  of $\Delta_{\mathcal{X}}$ containing $\Delta_J$, and they both lie on $Sk(X)$. A very simple geometric situation is when $E_J$ is the smooth total space of a possibly singular $\mathbb{P}^1$-fibration over an $(n-1)$-dim smooth variety $D$, and the two 1-dimensional faces correspond to two disjoint sections $E_{J',1}, E_{J',2}$ of the $\mathbb{P}^1$-fibration, so $E_{J',1}\simeq E_{J',2} \simeq D$. A natural way to make $\mathcal{D}_J(x,\norm{\cdot}_{CY})^{n}\cdot E_J =0$ and $\mathcal{D}_J(x,\norm{\cdot}_{CY})$ nef for $x\in \Delta_J$, is to ask $\mathcal{D}_J(x,\norm{\cdot}_{CY})$ to be the pullback of a nef class on the base $D$. We regard this nef class as the limiting element of $\mathcal{D}_J(y,\norm{\cdot}_{CY})\in H^{1,1}(E_{J',i})$
as $y$ approaches $x$ from either of the $1$-dimensional faces. Then the matching condition is naturally seen as the continuity of $\mathcal{D}_J(y,\norm{\cdot}_{CY})$ across the $0$-dimensional face. This example may be relevant for the Ooguri-Vafa type neck region in \cite{HSVZ}\cite{HSVZ2} (\cf  \cite[section 7.1]{HSVZ2}). The possibility for the $\mathbb{P}^1$-fibration to develop nodal fibres is related to the monopole bubbling phenemenon in these papers.

\end{eg}

\subsection{Non-maximal degenerations, generalised Calabi ansatz}

It is natural to ask what happens to the CY metrics for non-maximal polarized algebraic degenerations. Recall maximal degenerations require the essential skeleton to have dimension $n$. The other extreme case, where $\dim Sk(X)=0$, corresponds to degenerations with a uniform volume noncollapsing condition, and is by now quite well understood \cite{Zhang2}: the metric limit is a Calabi-Yau variety with klt singularities. The case $0<\dim Sk(X)<n$ is expected to exhibit a mixture of both the algebraic and NA/tropical behaviours. There is no systematic theory, but the CY metrics are described in some sporadic examples \cite{HSVZ}\cite{HSVZ2}. In such examples, a key role is played by a \emph{generalized Gibbons-Hawking ansatz}, which expresses CY metrics with some torus symmetry in terms of both symplectic moment coordinates and complex coordinates on the K\"ahler quotient. In the generic region, the asymptotic behaviour of these metrics is captured by the \emph{Calabi ansatz}, which describes the metric solely in complex coordinates using the K\"ahler potential \cite[chapter 2]{HSVZ2}. The procedure to pass from the Calabi ansatz to the generalized Gibbons-Hawking ansatz is akin to the Legendre transform.

The Calabi ansatz is as follows. Let $(Y,\Omega_Y)$ be a compact $(n-1)$-dimensional CY manifold with an ample line bundle $E$, and let $h$ denote the Hermitian metric on $E\to Y$ whose curvature form is the Calabi-Yau metric on $Y$ in the class $c_1(E)$, so the length $r=h^{1/2}$ defines a function on the total space $E$. 
Let $\xi$ denote a local  holomorphic fibre coordinate. The subset $\{  0<r\lesssim 1 \}\subset E$ has a nowhere vanishing holomorphic volume form $\Omega_E= \Omega_Y \wedge d\log \xi$, defined indpendent of $\xi$. Then the metric ansatz 
\[
\omega_E= dd^c (-\log r)^{(n+1)/n}
\]  
defines a CY metric on $\{  0<r\lesssim 1 \}\subset E$ compatible with $\Omega_E$.

Very little is known about the relationship between the NA MA solution and the metric degenerations, even in the aforementioned cases where the metric is well understood. Relating these is likely to require first proving some version of the comparison property, which is a major open problem. Notwithstanding all technical difficulties, we speculate the following heuristic principle: \emph{in the small $t$ limit, inside the generic region on $X_t$, the NA solution should approximately describe the behaviour of the K\"ahler potential at large scales, and obliterate the information on small compact directions.} Based on this principle, we will arrive at a generalised  Calabi ansatz, which is a candidate limiting description of the CY metrics in the generic region, at least in some idealized situations.

We assume the setting of section \ref{ConjecturalmeaningNAMAII}, and denote the NA MA solution as 
 $\norm{\cdot}_{CY}=\norm{\cdot}_{\mathcal{L}}e^{-\phi_0}$. Let $\Delta_J$ be an $m$-dimensional face of $Sk(X)$. We impose the simplifying assumptions:
 
 \begin{itemize}
 \item There is no $E_i$ among $i\in I$ intersecting $E_J$ transversely in $\mathcal{X}$. Consequently, the Poincar\'e residue $\text{Res}_{E_J}(\Omega)$ is nowhere vanishing on $E_J$. In this situation, the complex geometric picture around $E_J\subset \mathcal{X}$ is modelled on the total space of the rank $m$ vector bundle $\tilde{\pi}: \oplus_0^m \mathcal{O}(E_i) \to E_J$, with a trivialisation $ \mathcal{O}(\sum E_i)\simeq \mathcal{O}$ provided by the coordinate $t$. The holomorphic volume form for $|t|\ll 1$ is approximately
 \[
 \Omega\sim \tilde{\pi}^*\text{Res}_{E_J}(\Omega)\wedge t d\log z_0\wedge\ldots d\log z_m,
 \]
 so the holomorphic volume form $\Omega_t$ on $X_t$ is approximately
 \[
 \Omega_t \sim (-1)^{n-m} \tilde{\pi}^*\text{Res}_{E_J} \wedge d\log z_1\wedge\ldots d\log z_m.
 \]

 \item For every $x\in \text{Int}(\Delta_J)$, the class $\mathcal{D}_J(x, \norm{\cdot}_{CY})\in H^{1,1}(E_J)$ is K\"ahler (\cf Remark \ref{semipositivityconvexity}).

 \item The NA solution $\phi_0$ is smooth, and in particular strictly convex on $\text{Int}(\Delta_J)$.
 \end{itemize}

We recycle the computation in section \ref{ConjecturalmeaningofNAMA}, to construct the overparametrisation $u$ of $\phi_0$, and produce the Hermitian metric $h_t$ on $(X, c_1(L))$, so that $h_t^{1/|\log |t||}$ naturally converges to $\norm{\cdot}_{CY}^2$ in the hybrid topology. Up to $O(\frac{1}{|\log |t||})$ relative error, we have the metric asymptote (\ref{curvatureformasymptote}) and the measure asymptote (\ref{measureasymptotegeneralisedCY}).
This procedure is essentially dictated by the NA information. In particular, for each $x\in \text{Int}(\Delta_J)$, the class 
\[
[-dd^c \log h_{\mathcal{L}}^{1/2} +\sum_I \frac{\partial u}{\partial x_i} dd^c \phi_i ]=\mathcal{D}_J(x,\norm{\cdot}_{CY}) \in H^{1,1}(E_J)
\]
is fixed by the NA MA solution.

Notice at this stage we still have the freedom to adjust $h_{\mathcal{L}}$ and $r_i$ which enter the construction in section \ref{ConjecturalmeaningofNAMA}. This information controls the small scale metric, and is not directly visible to the NA MA solution. For each 
$x\in \text{Int}(\Delta_J)$, let $\phi_x$  be the solution to the complex MA equation on $E_J$:
\begin{equation}\label{generalisedCalabifibre}
\frac{ \left(  -dd^c \log h_{\mathcal{L}}^{1/2} +\sum_I \frac{\partial u}{\partial x_i} dd^c \phi_i + dd^c \phi_x    \right)^{n-m} }{ \mathcal{D}_J(x,\norm{\cdot}_{CY})^{n-m}  }=  \frac{  \text{Res}_{E_J}(\Omega)\wedge \overline{\text{Res}_{E_J}(\Omega)}  }{ \int_{E_J} \text{Res}_{E_J}(\Omega)\wedge \overline{\text{Res}_{E_J}(\Omega)}     }.
\end{equation}
If we wish to completely determine $\phi_x$ we need to fix a normalisation, but this choice is not essential. Under our hypotheses the dependence of $\phi_x$ on $x$ can be made smooth.

We then modify $h_t$ into $\tilde{h}_t= h_te^{-2\phi_x}$, where $x=\text{Log}_{\mathcal{X}} (z)$ on $X_t$. The metric asymptote (\ref{curvatureformasymptote}) then implies that on $\text{Log}_{\mathcal{X}}^{-1}(\text{Int}(\Delta_J))\subset X_t$ for $|t|\ll 1$, up to $O(\frac{1}{|\log |t||} )$ relative error,
\begin{equation}\label{generalizedCalabimetric}
\begin{split}
- dd^c\log \tilde{h}_t^{1/2}\sim &\sum_1^p \frac{\partial^2 \phi_0}{\partial x_i\partial x_j} \frac{1}{|\log |t||} \frac{ \sqrt{-1}}{4\pi} d\log z_i\wedge d \log \bar{z}_j
	\\
&-dd^c \log h_{\mathcal{L}}^{1/2} +\sum_I \frac{\partial u}{\partial x_i} dd^c \phi_i+ dd^c\phi_x.	
	\end{split}
\end{equation}
The normalisation ambiguity on $\phi_x$ is suppressed by $O(\frac{1}{|\log |t||} )$ relative to the term $\sum_1^p \frac{\partial^2 \phi_0}{\partial x_i\partial x_j} \frac{1}{|\log |t||} \frac{ \sqrt{-1}}{4\pi} d\log z_i\wedge d \log \bar{z}_j$. In particular, we see $- dd^c\log \tilde{h}_t^{1/2}$ is positive definite, so defines a K\"ahler metric.

We then compute its volume form up to $O(\frac{1}{|\log |t||})$ relative error:
\[
\begin{split}
(- dd^c\log \tilde{h}_t^{1/2})^n
\sim & m! \det( D^2\phi_0 ) \left(  -dd^c \log h_{\mathcal{L}}^{1/2} +\sum_I \frac{\partial u}{\partial x_i} dd^c \phi_i  +dd^c \phi_x  \right)^{n-m} \wedge \\
& \prod_{i=1}^m \frac{1}{2\pi|\log |t||}  d\log |z_i|\wedge d \arg{z}_i,
\\
\sim   m! \det( D^2\phi_0 )& \mathcal{D}_J(x,\norm{\cdot}_{CY})^{n-m} \frac{  \text{Res}_{E_J}(\Omega)\wedge \overline{\text{Res}_{E_J}(\Omega)}  }{ \int_{E_J} \text{Res}_{E_J}(\Omega)\wedge \overline{\text{Res}_{E_J}(\Omega)}     }\wedge
\\
& \prod_{i=1}^m \frac{\sqrt{-1}}{4\pi|\log |t||}  d\log z_i\wedge d \log \bar{z}_i
\\
\sim  \frac{ m!}{ (4\pi|\log |t||)^m }& \det( D^2\phi_0 ) \mathcal{D}_J(x,\norm{\cdot}_{CY})^{n-m}
 \frac{ \sqrt{-1}^{n^2} \Omega_t\wedge \overline{\Omega_t}  }{ \int_{E_J} \sqrt{-1}^{(n-m)^2} \text{Res}_{E_J}(\Omega)\wedge \overline{\text{Res}_{E_J}(\Omega)}     }.
\end{split}
\]
where we use the metric asymptote (\ref{generalizedCalabimetric}), the construction of $\phi_x$ (\ref{generalisedCalabifibre}), and the asymptote of $\Omega_t$ in terms of the Poincar\'e residue. Now applying the PDE interpretation of NA MA equation (\ref{NAMAPDE}), we see
\begin{equation}
(- dd^c\log \tilde{h}_t^{1/2})^n
\sim \frac{ (L^n)}{ (4\pi|\log |t||)^m }
 \sqrt{-1}^{n^2} \Omega_t\wedge \overline{\Omega_t}  .
\end{equation}
This means the metric $-dd^c\log \tilde{h}_t^{1/2}$ is approximately Calabi-Yau up to $O(\frac{1}{|\log |t||}  )$ relative error in the region on $X_t$ corresponding to the (slightly shrinked)
interior of $\Delta_J$. We call $-dd^c\log \tilde{h}_t^{1/2}$ the \emph{generalised Calabi ansatz}, and we expect this to model the CY metric on the generic region of $X_t$ in the class $c_1(L)$ up to small error. This provides a very tight relation between the NA MA equation and the degenerating CY metrics on $X_t$.

We now explain how to see the Calabi ansatz as a special case. The analogue of $\mathcal{X}$ is the total space of the rank 2 vector bundle $E\oplus E^{-1}\to Y$, so the det bundle has a canonical trivialisation coordinate $t$, which defines $X_t\subset E\oplus E^{-1}$. Equivalently $X_t$ can be viewed as a submanifold of $E$.  There is a holomorphic form on the total space $E\oplus E^{-1}$, given by $\Omega=\Omega_Y\wedge td\log z_0\wedge d\log z_1$, where $z_1$ is a local fibre coordinate on $E\to Y$, and $z_0z_1=t$. There is an induced holomorphic volume form $\Omega_t$ on $X_t$ defined by $\Omega=dt\wedge \Omega_t$, calculated to be $\Omega_t=(-1)^{n-1} \Omega_Y\wedge d\log z_1$, compatible up to sign with the Calabi ansatz setup. Now assume $E$ is ample, and is endowed with a Hermitian metric $h$ whose curvature form is the CY metric in $(Y, c_1(E))$. Denote $r=h^{1/2}$ as the length function on $E$. The analogue of $\mathcal{L}$ is $\mathcal{O}$. By the generalized Calabi ansatz prescription, we should find a function $\phi:\R_+\to \R$ satisfying the special case of the NA MA equation (\ref{NAMAPDE})
\[
\phi'' (\phi')^{n-1}=\text{const},
\]
and produce the metric $-\log |t| dd^c \phi( \frac{\log r}{\log |t|} )$ on $X_t$. The solution 
$
\phi(x)= x^{(n+1)/n} 
$
reproduces the Calabi ansatz up to scaling.

\begin{rmk}
The construction of the generalized Calabi ansatz above is analogous to the semi-Ricci-flat metric important in collapsing problems associated with holomorphic fibrations \cite{Tosatti}.
\end{rmk}

\begin{rmk}
In the maximal degeneration case $n=m$, near an $n$-dimensional open face of $Sk(X)$, there is no small compact directions described by holomorphic coordinates, and the generalized Calabi ansatz reduces to the semiflat metric. In general, this ansatz exhibits a mixture of holomorphic and NA behaviours.
\end{rmk}

\begin{rmk}
For $m<n$, in the non-generic regions corresponding to lower dimensional faces of $Sk(X)$, the NA MA solution is expected to lose information about the metric. One expects instead that Ooguri-Vafa type metrics constructed using the generalised Gibbons-Hawking ansatz become important \cite{HSVZ}\cite{HSVZ2}\cite{Li}. It would in particular be very interesting to understand the next generic behaviour, namely how the transition across $(m-1)$-dimensional faces of $Sk(X)$ occurs in general.
\end{rmk}

\end{document}